\newtheorem{lemma}{Lemma}
\newtheorem{definition}{Definition}
\newtheorem{theorem}{Theorem}
\newdefinition{remark}{Remark}
\newdefinition{cor}{Corollary}
\newdefinition{example}{Example}
\newproof{pf}{Proof}
\definecolor{blue1}{rgb}{0.8,0.8,1.0}
\definecolor{blue2}{rgb}{0.6,0.6,1.0}
\definecolor{blue3}{rgb}{0.4,0.4,0.8}
\definecolor{blue4}{rgb}{0.2,0.4,0.6}
\definecolor{blue5}{rgb}{0.0,0.2,0.6}
\definecolor{lowyellow}{cmyk}{0,0,0.15,0}
\definecolor{bordeaux}{rgb}{.7,.4,0.}
\definecolor{grey}{rgb}{.7,.7,.7}
\definecolor{rand}{rgb}{.0,.4,1.}
\definecolor{innen}{rgb}{0,0,0}
\newcommand {\dx} {\,{\rm d}{\mathbf x}}
\newcommand {\dt} {\,{\rm d}{t}}
\newcommand {\ds} {\,{\rm d}{\mathrm s}}
\newcommand {\bu} {{\mathbf u}}
\newcommand {\bv} {{\mathbf v}}
\newcommand {\bx} {{\mathbf x}}
  \newcommand{\R}{\mathbb{R}}
  \newcommand{\pd}[2]{\frac{\partial #1}{\partial #2}}
  \newcommand{\td}[2]{\frac{\mathrm d #1}{\mathrm d #2}}
\newcommand{\beq}{\begin{equation}}
\newcommand{\eeq}{\end{equation}}
\def\ps@pprintTitle{%
  \let\@oddhead\@empty
  \let\@evenhead\@empty
  \def\@oddfoot{
    \footnotesize\itshape
    \hfill\today
  }%
  \let\@evenfoot\@oddfoot}
\newcommand{\Nis}{\mathcal N_i\backslash\{i\}}
\newcommand{\N}{\mathbb{N}}
\begin{document}

\begin{frontmatter} 
  \title{Locally energy-stable finite element schemes for incompressible flow problems: Design and analysis for equal-order interpolations}

  \cortext[cor1]{Corresponding author}

\author[HH]{Hennes Hajduk}
\ead{hennes.hajduk@geo.uio.no}  

\address[HH]{Department of Geosciences, University of Oslo, Postboks 1022 Blindern, 0315 Oslo, Norway}

\author[DK]{Dmitri Kuzmin\corref{cor1}}
\ead{kuzmin@math.uni-dortmund.de}  

\address[DK]{Institute of Applied Mathematics (LS III), TU Dortmund University, Vogelpothsweg 87,
  D-44227 Dortmund, Germany}

\author[GL]{Gert Lube}
\ead{lube@math.uni-goettingen.de}

\address[GL]{Institute for Numerical and Applied Mathematics,
Georg-August-Universit\"at G\"ottingen, D-37083 G\"ottingen, Germany}

\author[PO]{Philipp \"Offner}
\ead{mail@philippoeffner.de}

\address[PO]{Department of Mathematics, TU Clausthal,
  Erzstra\ss e 1, D-38678 Clausthal-Zellerfeld, Germany}

\journal{Computers \& Fluids}

\begin{abstract}
We show that finite element discretizations of incompressible flow problems can be designed to ensure preservation/dissipation of kinetic energy not only globally but also locally. In the context of equal-order (piecewise-linear) interpolations, we prove the validity of a semi-discrete energy inequality for a quadrature-based approximation to the nonlinear convective term, which we combine with the Becker--Hansbo pressure stabilization. An analogy with entropy-stable algebraic flux correction schemes for the compressible Euler equations and the shallow water equations yields a weak `bounded variation' estimate from which we deduce the semi-discrete Lax--Wendroff consistency and convergence towards dissipative weak solutions. The results of our numerical experiments for standard test problems confirm that the method under investigation is non-oscillatory and exhibits optimal convergence behavior.
\end{abstract}
\begin{keyword}
  incompressible Euler and Navier--Stokes equations; stabilized finite element methods;
  equal-order interpolation; energy inequality; consistency; convergence; dissipative weak solutions
  
\end{keyword}
\end{frontmatter}

\section{Introduction}
\label{sec:intro}


High-resolution schemes for nonlinear systems of conservation laws are typically based on generalizations of algorithms originally designed to ensure strong stability in the case of a scalar equation~\cite{book}. Prominent representatives of such approaches include multidimensional flux-corrected transport (FCT) methods \cite{kuzmin2012b,kuzmin2010a,lohner1987} and local extremum diminishing (LED) alternatives \cite{jameson1993,jameson1995,lyra2002}. A common drawback of straightforward extensions to systems is the lack of rigorous theoretical justification. For example, many FCT and LED schemes for the compressible Euler equations produce essentially non-oscillatory approximations but do not guarantee positivity preservation or entropy stability.

The design and analysis of structure-preserving numerical methods for compressible flow problems have gained momentum in recent years. In particular, provably invariant domain preserving, entropy stable, and kinetic energy preserving finite volume schemes were proposed in the literature~\cite{bouchut2004,chandrashekar2013,coppola2023,hillebrand2023,jameson2008,ortleb2017,ray2016,subbareddy2009,tadmor2003}. Modern approaches to algebraic flux correction \cite{kuzmin2012a,kuzmin2012b} for finite element methods were designed to enforce preservation of invariant domains (positivity preservation)~\cite{carlier2023,guermond2018,kuzmin2020,zhang2011}, local discrete maximum principles for scalar quantities of interest
\cite{dobrev2018,lohmann2016}, and entropy stability conditions \cite{gassner2013,KuHaRu2021,pazner2019}. Subcell limiting techniques based on these criteria were designed in \cite{hajduk2021,kuzmin2020a,lin2024,pazner2021,rueda2024} for high-order finite elements. Feireisl et al. \cite{feireisl2020b,feireisl2021} introduced a general framework for proving convergence of structure-preserving schemes to dissipative measure-valued solutions of multidimensional hyperbolic systems under the assumptions of uniform boundedness and entropy stability. Examples of finite element analysis based on this theory can be found in \cite{abgrall2023convergence,preprint,lukacova2023}.

Numerical schemes for incompressible flow problems often use stabilization terms  to prevent spurious oscillations in the velocity and pressure fields. It is essential to ensure that these terms suppress numerical instabilities, while preserving fine-scale features of physical origin.
In the context of monotonically integrated large eddy simulation (MILES) of turbulent incompressible flows, numerical viscosity of an FCT-like discretization method serves as an implicit subgrid scale (SGS) model for unresolvable effects \cite{boris1992,drikakis2005,grinstein2004,grinstein2012,grinstein2007}.
As explained in \cite{benha,perot2011}, the kinetic energy preservation (KEP) property of discretized advection terms is an essential requirement for realistic SGS modeling of the energy cascade. The KEP design criterion dictates the use of (skew-)symmetry preserving discretization methods~\cite{veldman2019,vestappen2003}. Failure to ensure consistency with the local energy equations presented in \cite{duchon,perot2011} may result in numerical artifacts or instabilities. Many finite element methods for the incompressible Euler and Navier--Stokes equations guarantee global energy stability. However, we are not aware of any finite element method that is locally energy stable in the sense that a (semi-)discrete energy inequality holds for the nodal values. In our proofs of consistency, we use this particular energy stability property to derive a weak BV bound (cf.~\cite{coquel1991,preprint}) and show that the uniform boundedness of nodal velocities and pressures implies that of nodal accelerations. The local energy inequality formulated in \cite[Thm.~2]{zbMATH07042603} is an interesting theoretical result but it does not provide the estimates that we need.

According to Fehn et al. \cite[Sec.~7]{fehn2024}, the question of whether or not local energy stability is a prerequisite for weak convergence requires further investigations. As a first step toward that end, we adapt the AFC tools developed
in \cite{swe,kuzmin2020,book,KuHaRu2021} and the analysis carried out in
\cite{feireisl2020b,feireisl2021,preprint,lukacova2023} to continuous
$\mathbb{P}_1\mathbb{P}_1$
finite element discretizations of the Navier--Stokes system.
The velocity
is stabilized using a structure-preserving quadrature rule. Checkerboard
pressure~modes are avoided using the stabilization
proposed in \cite{becker}.
In contrast to standard analysis of finite element methods for
the incompressible Navier--Stokes equations, we do not assume continuity
or regularity of exact solutions. Our theoretical studies build on
the work of Duchon and Robert~\cite{duchon} and Sz\'ekelyhidi Jr.\
and Wiedemann \cite{zbMATH06102064}. We are interested in
analyzing the limit of vanishing viscosity and gaining deeper insight into
the behavior of dissipative weak solutions to incompressible flow
problems.

For a spectral (viscosity)
method \cite{lanthaler2021computation,zbMATH06497863} and a
first-order asymptotically preserving  staggered grid finite volume
(FV) scheme \cite{arun2024asymptoticpreservingfinitevolume}, it was recently demonstrated that  dissipative measure-valued (DMV) solutions of the compressible system converge to the DMV solutions of the incompressible system as the Mach number and the mesh size tend to zero. Further recent efforts in the field of numerical analysis were focused on incorporating stochastic noise into deterministic flow models to improve the regularity and provide a more realistic modeling of turbulence effects. These studies employ more general solution concepts, such as dissipative measure-valued martingale solutions or weak pathwise solutions, demonstrating their existence through the convergence of consistent numerical schemes and/or providing error estimates; see \cite{zbMATH07898893,zbMATH07726037,zbMATH07609837} and references cited therein. A common feature of existing proof techniques for  deterministic and stochastic models is the use of divergence-free test functions. To the best of the authors' knowledge, no direct proofs of convergence to generalized solutions are available for equal-order interpolations, such as the $\mathbb{P}_1\mathbb{P}_1$ finite element pairs considered in our work.

The remainder of this paper is organized as follows. In Section \ref{sec:method}, we
present our finite element method.  In Section \ref{sec:strong}, we prove local energy
stability and consistency under the assumption of strong convergence. Weak-$*$
convergence to  dissipative weak solutions is shown in Section \ref{sec:weak}.
The numerical examples of Section \ref{sec:examples} demonstrate that the proposed
scheme is, indeed, non-oscillatory and achieves optimal convergence rates. The
results are discussed and conclusions are drawn in Section \ref{sec:conclusions}.

\section{Stabilized finite element method}
\label{sec:method}

Let $\Omega\subset\R^d,\ d\in\{2,3\}$ be a Lipschitz domain. The
Navier--Stokes equations for the velocity $\bu(\bx,t)$ and pressure $p(\bx,t)$
of an incompressible fluid can be written in the skew-symmetric form
\begin{subequations}\label{eq_incompressible}
\begin{alignat}{2}
  \pd{\bu}{t}+\frac12[\nabla\cdot(\bu\otimes\bu)+\bu\cdot\nabla\bu]
  +\nabla p&=\nu\Delta\bu&\qquad&\mbox{in}\ \Omega\times(0,\infty),\\
\nabla\cdot\bu&=0&\qquad&\mbox{in}\ \Omega\times(0,\infty).
\end{alignat}
\end{subequations}
The kinematic viscosity $\nu\ge 0$ is assumed to be constant. In the case $\nu=0$,
the general Navier--Stokes system \eqref{eq_incompressible} reduces to the incompressible
Euler equations. We prescribe the initial condition
\beq
\bu(\cdot,0)=\bu_0\qquad\mbox{in}\ \Omega,
\eeq
where $\bu_0$ is a given solenoidal vector field. To avoid distinguishing between the
viscous and inviscid flow regimes, we impose
periodic boundary conditions
on the boundary $\Gamma=\partial\Omega$. Since the pressure is defined up to
a constant in the periodic case, we require that
$\int_{\Omega} p(\bx,t)\dx=0$ for $t>0$.

If $\mathbf{u}$ is a smooth exact solution to
\eqref{eq_incompressible}, then the specific kinetic energy $
\eta(\bu)=\frac{|\bu |^2}2$ satisfies (cf. \cite{duchon})
\beq\label{energy}
\pd{\eta(\bu)}{t}+\nabla\cdot \mathbf{q}(\bu,p)\le 0\qquad\mbox{in}\ \Omega\times(0,\infty),
\eeq
where $\mathbf{q}(\bu,p)=(\eta(\bu)+p)\bu$. It follows 
that $\int_{\Omega}\eta(\bu(\bx,t))\dx$ is
a non-increasing function of  $t\ge 0$.

\subsection{Galerkin space discretization}

We  discretize \eqref{eq_incompressible}
in space using the $\mathbb{P}_1\mathbb{P}_1$
finite element pair\footnote{In Section~\ref{sec:examples}, we also show numerical results obtained with $\mathbb{Q}_1\mathbb{Q}_1$
approximations on quadrilateral meshes.}  on a conforming simplex mesh $\mathcal T_h$. The scalar
Lagrange basis functions associated with its
vertices $\bx_1,\ldots,\bx_{N_h}$ are denoted by $\varphi_{1},\ldots,\varphi_{N_h}$. They
span a finite element space $V_h$, are nonnegative, and have the
\emph{partition-of-unity} property
\beq\label{pu}
\sum_{j=1}^{N_h}\varphi_j(\bx)=1\qquad\forall\bx\in\bar\Omega.
\eeq
Inserting the continuous finite element approximations
\beq
\bu_h(\bx,t)=\sum_{j=1}^{N_h}\bu_j(t)\varphi_j(\bx),\qquad
p_h(\bx,t)=\sum_{k=1}^{N_h}p_k(t)\varphi_k(\bx)
\eeq
into the skew-symmetric Galerkin weak form
\begin{subequations}
\label{weakform}
\begin{align}\label{weakformu}
  \int_{\Omega}\left(
  \pd{\bu_h}{t}+\frac12[\nabla\cdot(\bu_h\otimes\bu_h)+\bu_h\cdot\nabla\bu_h]
  +\nabla p_h\right)\cdot \bv_h\dx+\nu \int_{\Omega}
  \nabla\bu_h :\nabla \bv_h\dx&=0,\\
  -\int_{\Omega}(\nabla\cdot\bu_h)q_h\dx&=0
\end{align}
\end{subequations}
of system \eqref{eq_incompressible} with vector-valued velocity test functions
$\mathbf{v}_h\in\mathbf{V}_h:=(V_h)^d$ and scalar-valued pressure test functions
$q_h\in V_h$, we obtain a spatial semi-discretization that can be written as
\begin{subequations}\label{galerkin}
\begin{align}
  \sum_{j\in\mathcal N_i}\left(m_{ij} \frac{\text{d}\bu_j}{\text{d} t}\right) &=
  -\sum_{j\in\mathcal N_i}[(a_{ij}+\nu s_{ij})\mathbf{u}_j
   +\mathbf{c}_{ij}p_j],\qquad
  i=1,\ldots,N_h,\\0&=
-\sum_{j\in\mathcal N_k}\mathbf{c}_{kj}\cdot\bu_j,\qquad
  k=1,\ldots,N_h.
\end{align}
\end{subequations}
We denote by $\mathcal N_i$ the set of node
indices $j$ such that the basis functions $\varphi_i$ and $\varphi_j$ have
overlapping supports. 
The constant coefficients of the semi-discrete problem \eqref{galerkin} are defined by
\beq
  m_{ij}=\int_{\Omega}\varphi_i\varphi_j\dx,\qquad
  \mathbf{c}_{ij}=\int_{\Omega}\varphi_i\nabla\varphi_j\dx,\qquad
  s_{ij}=\int_{\Omega}\nabla\varphi_i\cdot\nabla\varphi_j\dx.
\eeq
  The entries
  \beq
  a_{ij}=\frac12\int_{\Omega}[\varphi_i\bu_h\cdot\nabla\varphi_j
-\varphi_j\bu_h\cdot\nabla\varphi_i]\dx
\eeq
of the discrete convection operator
depend on $\bu_h$. It is easy to verify that
\beq
m_{ij}=m_{ji},\qquad  \mathbf{c}_{ij}=-\mathbf{c}_{ji},
\qquad s_{ij}=s_{ji},\qquad a_{ij}=-a_{ji}.
\eeq
\begin{remark}
If non-periodic
  boundary conditions are imposed on $\Gamma$, then
  $$\mathbf{c}_{ij}=-\mathbf{c}_{ji}+\int_{\Gamma}\varphi_i\varphi_j
  \mathbf{n}\ds,$$
  and the equations associated with boundary nodes need to be
  eliminated or modified (see, e.g., \cite{elman2014}).
  \end{remark}

In the case of non-vanishing viscosity $\nu>0$, we assume that the mesh $\mathcal T_h$ satisfies
the geometric angle conditions under which $s_{ij}\le 0$ for $j\ne i$ (see, e.g., \cite{brandts2020}).
From \eqref{pu} we deduce that
\beq\label{rowsums}
\sum_{j=1}^{N_h}\mathbf{c}_{ij}=\mathbf{0},\qquad
\sum_{j=1}^{N_h}s_{ij}=0,\qquad
\sum_{j=1}^{N_h}a_{ij}=-\frac12\int_{\Omega}
\bu_h\cdot\nabla\varphi_i\dx.
\eeq

\begin{remark}\label{remark_energy}
Substituting  $\bv_h=\bu_h$ and $q_h=p_h$ into \eqref{weakform},
we obtain the evolution equation
\begin{align*}
\td{}{t}\int_{\Omega}\eta(\bu_h)\dx =&-
\frac12\int_{\Omega}
\big[\nabla\cdot(\bu_h\otimes\bu_h)+\bu_h\cdot\nabla\bu_h\big]\cdot\bu_h\dx
   -\nu \int_{\Omega}
    \nabla\bu_h:\nabla \bu_h\dx\\
&=-\nu \int_{\Omega}  \nabla\bu_h:\nabla \bu_h\dx
\end{align*}
for the total kinetic energy. This proves the global energy stability property
$\td{}{t}\int_{\Omega}\eta(\bu_h)\dx\le 0$.
\end{remark}

\subsection{Energy and pressure stabilization}

To construct a finite element discretization that satisfies a
discrete version of  \eqref{energy},
we define
\beq
m_i=
\int_\Omega\varphi_i\dx=\sum_{j=1}^{N_h}m_{ij},\qquad
\tilde a_{ij}=
\frac{\bu_j+\bu_i}{2}\cdot\mathbf{c}_{ij}
=-\tilde a_{ji}
\eeq
and notice that
\beq\label{tildesums}
\sum_{j=1}^{N_h}\tilde a_{ij}=\frac12
\int_{\Omega}\varphi_i(\nabla\cdot\mathbf{u}_h)\dx=
-\frac12\int_{\Omega}
\bu_h\cdot\nabla\varphi_i\dx
=\sum_{j=1}^{N_h}a_{ij}
\eeq
under the assumption of periodicity, under which integration
by parts produces no boundary terms.
\medskip

We modify \eqref{galerkin} by using the approximations
$m_{ij}\approx \tilde m_{ij}:=m_i\delta_{ij}$ and $a_{ij}\approx
\tilde a_{ij}$.
For stabilization purposes, we use
 graph Laplacian operators $D^u=(d_{ij}^u)_{i,j=1}^{N_h}$ and
 $D^p=(d_{kj}^p)_{k,j=1}^{N_h}$ such that \cite{kuzmin2012a}
 \begin{subequations}
  \begin{align}\label{ddiffu}
  \sum_{j=1}^{N_h}d_{ij}^u&=0,\qquad 
  d_{ij}^u=d_{ji}^u\ge 0\qquad \forall j\ne i\in\{1,\ldots,N_h\},\\
    \sum_{j=1}^{N_h}d_{ij}^p&=0,\qquad 
  d_{ij}^p=d_{ji}^p\ge 0\qquad \forall j\ne i\in\{1,\ldots,N_h\}.\label{ddiffp}
  \end{align}
  \end{subequations}
Our stabilized version of the standard
Galerkin discretization \eqref{galerkin} reads (cf. \cite{swe})
\begin{subequations}\label{galerkin-stab}
\begin{align}\label{galerkin-staba}
  m_i\frac{\text{d}\bu_i}{\text{d} t} &=
  \sum_{j\in\mathcal N_i}[(d_{ij}^u-\tilde a_{ij}-\nu s_{ij})\mathbf{u}_j
   -\mathbf{c}_{ij}p_j],\qquad
  i=1,\ldots,N_h,\\ 0&=
  \sum_{j\in\mathcal N_k}[d_{kj}^pp_j-
  \mathbf{c}_{kj}\cdot\bu_j],\qquad
  k=1,\ldots,N_h.
\end{align}
\end{subequations}
An operator form of \eqref{galerkin-stab}, which is better suited for numerical analysis, is presented in \eqref{weakform-stab} below.

\begin{remark}
The momentum conservation error caused by our modification of \eqref{galerkin}
is zero because 
\begin{align}\label{eq:aux}
\sum_{i=1}^{N_h}\sum_{j=1}^{N_h}(a_{ij}
- \tilde a_{ij}+d_{ij}^u)\bu_j&=\sum_{i=1}^{N_h}\sum_{j=1}^{N_h}(a_{ij}
- \tilde a_{ij})\bu_j
=\sum_{i=1}^{N_h}\sum_{j=1}^{N_h}(a_{ij}
- \tilde a_{ij})(\bu_j+\bu_i)=0.
\end{align}
This follows from \eqref{tildesums},\eqref{ddiffu}, and the fact that
$a_{ij}- \tilde a_{ij}=-(a_{ji}- \tilde a_{ji})$ by definition.
\end{remark}

By default, we use $d_{ij}^u=0$ and the algebraic Becker--Hansbo pressure stabilization (\cite{becker},
cf. \cite{brezzi})
\beq\label{beckerhansbo}
d_{ij}^p=\begin{cases}\omega  m_{ij} & \mbox{if}\ j\ne i,\\
-\sum_{k\in\Nis} d_{ik}^p & \mbox{if}\ j=i\end{cases}
\eeq
depending on a parameter $\omega>0$.
This definition of $d_{ij}^p$ provides the properties required in
\eqref{ddiffp}.

\begin{remark}
  The accuracy of \eqref{galerkin-stab} can be improved using the
  framework of algebraic flux correction to adjust the perturbation
  of \eqref{galerkin} subject to appropriate constraints
  (see \cite{barrenechea2018,kuzmin2012a,book} for details).
\end{remark}

\subsection{Crank--Nicolson time discretization}

We discretize the nonlinear system \eqref{galerkin-stab} in time using the semi-implicit Crank--Nicolson
method, which is non-dissipative and second-order accurate. In the two-dimensional
($d=2$) case, we store the components of $\bu_i=(u_i,v_i)^\top$ in global vectors
${\texttt u}=(u_i)_{i=1}^{N_h}$ and $\texttt v=(v_i)_{i=1}^{N_h}$. The pressure unknowns $p_k$
are stored in a global vector $\texttt p=(p_k)_{k=1}^{N_h}$. We denote by
$M_L=(m_i\delta_{ij})_{i,j=1}^{N_h}$ the lumped counterpart of the consistent
mass matrix $M_C=(m_{ij})_{i,j=1}^{N_h}$. The components of $\mathbf c_{ij}=
(c_{ij}^1,c_{ij}^2)^\top$ are stored in the blocks $C_k=(c_{ij}^k)_{i,j=1}^{N_h},\ k=1,2$
of a discrete gradient operator. The velocity-dependent coefficients
\begin{align*}
  r_{ij}(\texttt u,\texttt v) = d_{ij}^u(\texttt u,\texttt v)
  - \tilde a_{ij}(\texttt u,\texttt v)-\nu s_{ij}
\end{align*}
are stored in the matrix $R(\texttt u,\texttt v)=(r_{ij}(\texttt u,\texttt v))_{i,j=1}^{N_h}$, which we update
at the beginning of each time step.

To advance the nodal values of numerical approximations $\bu_h^n\approx\bu_h(\cdot,t^n)$ and
$p_h^n\approx p_h(\cdot,t^n)$ in time from a level $t^n=n\Delta t,\ n\in\N_0$ to the level $t^{n+1}$, we
solve the linearized system
\begin{align*}
\begin{bmatrix}
  M_L-\frac{\Delta t}2R(\texttt u^n,\texttt v^n) & 0 & \Delta t C_1 \\
0 & M_L-\frac{\Delta t}2R(\texttt u^n,\texttt v^n) & \Delta t C_2 \\
\Delta t C_1^\top & \Delta t C_2^\top & \Delta t D^p
\end{bmatrix}
\begin{bmatrix}
\texttt u^{n+1} \\ \texttt v^{n+1} \\ \texttt p^{n+1}
\end{bmatrix}
= \begin{bmatrix}
[M_L+\frac{\Delta t}2R(\texttt u^n,\texttt v^n)] \texttt u^n \\  [M_L+\frac{\Delta t}2R(\texttt u^n,\texttt v^n)]\texttt v^n \\ 0
\end{bmatrix}.
\end{align*}
To enforce the zero mean condition for $p$ in the periodic case, we
modify the last row of the system matrix accordingly. Other types of
boundary conditions are handled as in Elman et al. \cite[Sec.~3.3]{elman2014}.

\section{Analysis of stability and consistency}
\label{sec:strong}

The structure of the spatial semi-discretization \eqref{galerkin-stab} is
similar to that of algebraic flux correction schemes for the shallow water
equations with topography~\cite{swe} and the compressible Euler equations~\cite{preprint}.
Exploiting this similarity, we now extend the analysis in
\cite[Sect. 3,4]{preprint} to the incompressible case.

\subsection{Energy stability}\label{sec:es}

In our analysis of \eqref{galerkin-stab}, the kinetic energy $\eta(\bu)=\frac{|\bu |^2}2$ and the flux $\mathbf{q}(\bu,p)=(\eta(\bu)+p)\bu$ replace 
entropy pairs $\{\eta,\mathbf{q}\}$ that are used in \cite{swe,preprint} for
hyperbolic problems.
In the following theorem, we show that the
modified finite element discretization \eqref{galerkin-stab} of the Navier--Stokes
system \eqref{eq_incompressible}
is \emph{locally energy stable}, i.e., consistent with a semi-discrete version
of the energy inequality \eqref{energy}.

\begin{theorem}
  Assume that the boundary conditions are periodic and
  \beq\label{assumTh1}
  d_{ij}^u-\nu s_{ij}\ge 0,\quad d_{ij}^p\ge 0\qquad
  \forall j\ne i\in\{1,\ldots,N_h\}.
  \eeq
  Then the validity of \eqref{galerkin-stab} implies
  \beq\label{claimTh1}
  m_i\td{\eta(\bu_i)}{t}+\sum_{j\in\Nis}2|\mathbf{c}_{ij}|\mathcal Q(\bu_i,p_i,\bu_j,p_j;\mathbf n_{ij})\le 0,
\eeq
where $\mathbf n_{ij}=\frac{\mathbf{c}_{ij}}{|\mathbf{c}_{ij}|}$ and
$\mathcal Q(\bu_L,p_L,\bu_R,p_R;\mathbf n)$ is a numerical flux consistent with
$\mathbf{q}(\bu,p)\cdot\mathbf{n}$.
\end{theorem}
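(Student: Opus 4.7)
The plan is to take the scalar product of the stabilized momentum equation \eqref{galerkin-staba} with $\bu_i$ (so that, via $\bu_i\cdot\tfrac{d\bu_i}{dt}=\tfrac{d\eta(\bu_i)}{dt}$, the time-derivative term collapses to $m_i\tfrac{d\eta(\bu_i)}{dt}$) and to add $p_i$ times the stabilized continuity equation \eqref{galerkin-stabb}, which is identically zero. This combination is the discrete analogue of the continuous derivation in which one integrates by parts to turn $\bu\cdot\nabla p$ into $\nabla\cdot(p\bu)-p\nabla\cdot\bu$ and then uses $\nabla\cdot\bu=0$. The result is a local energy balance in which the pressure-work contributions appear in the symmetric combination $\mathbf c_{ij}\cdot(\bu_i p_j+\bu_j p_i)$ and the discrete divergence residual is carried by the pressure-stabilization term $p_i\sum_j d_{ij}^p p_j$.

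The second step is to decompose every resulting contribution into a two-point edge quantity plus a sign-definite jump. Three elementary identities drive this splitting: the polarization $\bu_i\cdot\bu_j=\eta(\bu_i)+\eta(\bu_j)-\eta(\bu_j-\bu_i)$; the product decomposition $\bu_i p_j+\bu_j p_i=\tfrac12(\bu_i+\bu_j)(p_i+p_j)-\tfrac12(\bu_i-\bu_j)(p_i-p_j)$; and $p_i(p_j-p_i)=\tfrac12(p_j^2-p_i^2)-\tfrac12(p_j-p_i)^2$. Combined with the zero-row-sum properties \eqref{rowsums},\eqref{ddiffu},\eqref{ddiffp} and the auxiliary identity $\sum_j\tilde a_{ij}=\tfrac12\sum_l\mathbf c_{il}\cdot\bu_l$ from \eqref{tildesums}, they collapse every global sum to a sum over $j\ne i$ and isolate the only two sign-definite residuals $(d_{ij}^u-\nu s_{ij})\eta(\bu_j-\bu_i)\ge 0$ and $\tfrac12 d_{ij}^p(p_j-p_i)^2\ge 0$. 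Under \eqref{assumTh1} these combine into a non-positive graph dissipation that supplies the $\le$ in \eqref{claimTh1}.

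Packaging the remaining contributions, all of which depend only on the pair $(\bu_i,p_i,\bu_j,p_j)$ and the geometric factor $\mathbf c_{ij}$, into a single expression of the form $2|\mathbf c_{ij}|\mathcal Q(\bu_i,p_i,\bu_j,p_j;\mathbf n_{ij})$ defines the numerical flux. Consistency $\mathcal Q(\bu,p,\bu,p;\mathbf n)=(\eta(\bu)+p)\bu\cdot\mathbf n$ is then checked by setting $\bu_i=\bu_j=\bu$ and $p_i=p_j=p$: every jump factor $\eta(\bu_j)-\eta(\bu_i)$, $\bu_j-\bu_i$ and $p_j-p_i$ vanishes, while the surviving "diagonal" piece reduces to $2\tilde a_{ij}(\eta(\bu)+p)|_{\rm diag}=2(\eta(\bu)+p)\bu\cdot\mathbf c_{ij}$, yielding exactly $2|\mathbf c_{ij}|\mathbf q(\bu,p)\cdot\mathbf n_{ij}$.

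The main technical obstacle is the nonlinear convective term $-\sum_j\tilde a_{ij}(\bu_j\cdot\bu_i)$. After the polarization substitution it produces an extra "divergence-error" factor $\eta(\bu_i)\sum_j\tilde a_{ij}$ that is not pointwise zero because $\bu_h$ is only weakly divergence-free in the equal-order setting. It has to be rewritten through \eqref{galerkin-stabb} as $\tfrac12\eta(\bu_i)\sum_l d_{il}^p(p_l-p_i)$ and then merged with the analogous piece coming from the pressure-work symmetrization and with the pressure stabilization, so that the combined non-sign-definite residual takes the pairwise form $\tfrac12(p_i-\eta(\bu_i))d_{ij}^p(p_j-p_i)$, which vanishes on the diagonal and is therefore admissible as part of $\mathcal Q$. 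Verifying that these interlocking cancellations leave behind \emph{no} additional sign-indefinite remainder beyond such diagonal-vanishing edge terms, and in particular that the positive graph-dissipation residuals are precisely the two listed above, is the delicate bookkeeping that constitutes the crux of the proof.
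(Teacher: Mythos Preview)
Your overall strategy---dot the momentum equation with $\bu_i$, add a multiple of the discrete continuity equation, split into pairwise flux terms plus sign-definite dissipation---is exactly what the paper does. The polarization/product identities you list are equivalent to the paper's $\bu_i=\tfrac12(\bu_j+\bu_i)-\tfrac12(\bu_j-\bu_i)$ splitting, and you correctly identify the two dissipative residuals $-(d_{ij}^u-\nu s_{ij})\eta(\bu_j-\bu_i)$ and $-\tfrac12 d_{ij}^p(p_j-p_i)^2$.

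However, your ``main technical obstacle'' paragraph introduces a genuine error. The term $-\tilde a_{ij}\eta(\bu_i)$ is \emph{not} an obstacle: since $\tilde a_{ij}=\tfrac12(\bu_i+\bu_j)\cdot\mathbf c_{ij}$, it is already a bona fide pairwise edge quantity depending only on $(\bu_i,\bu_j,\mathbf c_{ij})$ and can be put directly into $\mathcal Q$. By rewriting it via \eqref{auxsum} as $-\tfrac12\eta(\bu_i)d_{ij}^p(p_j-p_i)$ you replace a term that contributes $-\tilde a_{ij}\eta(\bu)$ on the diagonal $\bu_i=\bu_j=\bu$, $p_i=p_j=p$ with one that vanishes there. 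After this substitution, the only surviving diagonal convective contribution is $-\tilde a_{ij}\eta(\bu_j)\to-\tilde a_{ij}\eta(\bu)$, so the full diagonal piece becomes $-\tilde a_{ij}\bigl(\eta(\bu)+2p\bigr)$ rather than the $-2\tilde a_{ij}\bigl(\eta(\bu)+p\bigr)$ you assert. Your $\mathcal Q$ is then inconsistent with $\mathbf q(\bu,p)\cdot\mathbf n$, and the claimed consistency check fails.

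The paper avoids this trap by testing the continuity equation with $\tfrac{|\bu_i|^2}{2}+p_i$ rather than with $p_i$ alone. The additional contribution $-\eta(\bu_i)(\bu_j+\bu_i)\cdot\mathbf c_{ij}=-2\eta(\bu_i)\tilde a_{ij}$ from the $\eta(\bu_i)$ part exactly restores the missing $-\tilde a_{ij}\eta(\bu_i)$ edge term after the \eqref{auxsum} substitution. Equivalently, you can simply keep $-\tilde a_{ij}\eta(\bu_i)$ as part of the flux and skip the \eqref{auxsum} rewrite; either fix makes your argument go through.
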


  \begin{proof} 
    In view of the row sum properties \eqref{rowsums}, our semi-discrete scheme
     \eqref{galerkin-stab} can be written as
\begin{subequations}\label{galerkin-stable}
\begin{align}\label{galerkin-stablea}
  m_i \frac{\text{d}\bu_i}{\text{d} t} &=
  \sum_{j\in\mathcal N_i\backslash\{i\}}
      [(d_{ij}^u-\tilde a_{ij}-\nu s_{ij})(\mathbf{u}_j-\mathbf{u}_i)-
        \mathbf{c}_{ij}(p_j-p_i)]-\mathbf{u}_i
  \sum_{j=1}^N\tilde a_{ij}
      ,\\\label{galerkin-stableb}
      0 & = \sum_{j\in\mathcal N_i\backslash\{i\}}
      [d_{ij}^p(p_j-p_i)-(\bu_j+\bu_i)\cdot\mathbf{c}_{ij}].
\end{align}
\end{subequations}
Recalling the definition of $\tilde a_{ij}$ and using
\eqref{galerkin-stableb}, we obtain
\begin{align}\label{auxsum}
  \sum_{j=1}^N\tilde a_{ij}=
\sum_{j\in\mathcal N_i\backslash\{i\}}\tilde a_{ij}
  =\sum_{j\in\mathcal N_i\backslash\{i\}}
\frac{\bu_j+\bu_i}2\cdot\mathbf{c}_{ij}=
\sum_{j\in\mathcal N_i\backslash\{i\}}
\frac{d_{ij}^p}2(p_j-p_i).
\end{align}

By the chain rule, we have
$\td{\eta(\bu_i)}{t}=\bu_i\cdot
\frac{\text{d}\bu_i}{\text{d} t}$, where $\frac{\text{d}\bu_i}{\text{d} t}$
is the time derivative given by \eqref{galerkin-stablea}. 
The substitution of \eqref{auxsum} and
addition of  \eqref{galerkin-stableb} multiplied by
$\frac{|\mathbf{u}_i|^2}2+p_i$ yield the evolution equation
\begin{align}\label{etadot}\nonumber
  m_i\td{\eta(\bu_i)}{t} &=\bu_i\cdot
  \sum_{j\in\mathcal N_i\backslash\{i\}}\Big[(d_{ij}^u-\tilde a_{ij}-\nu s_{ij})(\bu_j-\bu_i)
     -\left(\frac{d_{ij}^p\bu_i}{2} +
     \mathbf c_{ij}\right)\left(p_j-p_i\right)\Big]\\
 &+\left(\frac{|\mathbf{u}_i|^2}2+p_i\right)\sum_{j\in\mathcal N_i\backslash\{i\}}
   [d_{ij}^p(p_j-p_i)-(\bu_j+\bu_i)\cdot\mathbf{c}_{ij}].
\end{align}

Following the analysis of entropy-stable schemes for nonlinear hyperbolic problems
\cite{swe,book,preprint,tadmor2003}, we use the decomposition $\bu_i=\frac12(\bu_j+\bu_i)
-\frac12(\bu_j-\bu_i)$ to show that \eqref{etadot} is equivalent to
\begin{align}\label{etadot2}
  m_i\td{\eta(\bu_i)}{t} =\sum_{j\in\mathcal N_i\backslash\{i\}}[F_{ij}+G_{ij}],
\end{align}
where we put all skew-symmetric terms into
\begin{align*}
  F_{ij}&=\frac{\bu_j-\bu_i}2\cdot\left[
    \tilde a_{ij}(\bu_j-\bu_i)+\mathbf c_{ij}(p_j-p_i)
    \right]
  \\
  &+
    (d_{ij}^u-\nu s_{ij})\left(\frac{|\mathbf{u}_j|^2}2-\frac{|\mathbf{u}_i|^2}2\right)
  -\frac{d_{ij}^p}2\left(\frac{|\mathbf{u}_j|^2}2+\frac{|\mathbf{u}_i|^2}2\right)
  (p_j-p_i)\\
  &+\frac12\left(\frac{|\mathbf{u}_j|^2}2+p_j+\frac{|\mathbf{u}_i|^2}2
    +p_i\right)\left[d_{ij}^p(p_j-p_i)-(\bu_j+\bu_i)\cdot\mathbf{c}_{ij}
      \right]\\
    &=\frac{\tilde a_{ij}}2|\bu_j-\bu_i|^2
    +\frac{\mathbf c_{ij}}2\cdot(\bu_j-\bu_i)(p_j-p_i)\\
    & -\frac{\bu_j+\bu_i}2\cdot\mathbf c_{ij}
  \left(\frac{|\bu_j|^2}2+p_j+\frac{|\bu_i|^2}2+p_i\right)\\
  &+(d_{ij}^u-\nu s_{ij})\left(\frac{|\bu_j|^2}2-\frac{|\bu_i|^2}2\right)
  +d_{ij}^p\left(\frac{p_j^2}2-\frac{p_i^2}2\right)
  =-F_{ji}
\end{align*}
and all symmetric ones into
\begin{align*}
  G_{ij}&=-\frac{\bu_j+\bu_i}2\cdot\left[
    \tilde a_{ij}(\bu_j-\bu_i)+\mathbf c_{ij}(p_j-p_i)
    \right]\\
  &-(d_{ij}^u-\nu s_{ij})\frac{|\bu_j-\bu_i|^2}2
 +\frac{d_{ij}^p}2\left(\frac{|\mathbf{u}_j|^2}2-\frac{|\mathbf{u}_i|^2}2\right)(p_j-p_i)
  \\
 &-\frac12\left(\frac{|\mathbf{u}_j|^2}2+p_j-\frac{|\mathbf{u}_i|^2}2-p_i\right)
    \left[d_{ij}^p(p_j-p_i)-(\bu_j+\bu_i)\cdot\mathbf{c}_{ij}
      \right]\\
  &=\left(\frac{\bu_j+\bu_i}2\cdot\mathbf{c}_{ij}
  -\tilde a_{ij}\right)\left(\frac{|\bu_j|^2}2-\frac{|\bu_i|^2}2\right)
  \\ &-(d_{ij}^u-\nu s_{ij})\frac{|\bu_j-\bu_i|^2}2 -d_{ij}^p\frac{(p_j-p_i)^2}2\\&
  =-(d_{ij}^u-\nu s_{ij})\frac{|\bu_j-\bu_i|^2}2 -d_{ij}^p\frac{(p_j-p_i)^2}2
=G_{ji}.
\end{align*}

The fluxes $F_{ij}=-F_{ji}$ have no influence on the total
kinetic energy $\sum_{i=1}^{N_h}m_i\eta(\bu_i)\approx \int_{\Omega}\eta(\bu)\dx$
because they cancel out upon summation of \eqref{etadot2} over $i$.
The local energy inequality \eqref{claimTh1} with 
$$\mathcal Q(\bu_i,p_i,\bu_j,p_j;\mathbf n_{ij})=-\frac{F_{ij}}{2|\mathbf{c}_{ij}|}$$
follows from \eqref{etadot2} since $G_{ij}\le 0$ by \eqref{assumTh1}.
The consistency requirement is met because
$\mathcal Q(\bu_i,p_i,\bu_j,p_j;\mathbf n_{ij})$ reduces to
$\mathbf{q}(\bu,p)\cdot\mathbf{n}_{ij}$ for $\bu_i=\bu=\bu_j,\ p_i=p=p_j$.
The proof of the theorem is complete.
  \end{proof}
  
 \begin{remark}
   In view of the fact that lumped-mass $\mathbb{P}_1$
    finite element approximations are equivalent to
   vertex-centered finite volume schemes \cite{selmin1993,selmin1996}, the
   weight $2\mathbf |\mathbf{c}_{ij}|$ of the semi-discrete
   inequality \eqref{claimTh1}
   corresponds to the surface area
   associated with the numerical energy flux
   $\mathcal Q(\bu_i,p_i,\bu_j,p_j;\mathbf n_{ij})$.
 \end{remark}
 
 \begin{remark}
 Proving energy stability for mixed finite element pairs (such as
 Taylor--Hood elements) is more difficult because we cannot multiply
 \eqref{galerkin-stableb} by $|\bu_i|^2/2+p_i$ to derive
 $F_{ij}$ and $G_{ij}$ as above.
\end{remark}

 The next lemma adapts
 the `weak BV estimate' obtained in \cite[Lem. 1]{preprint} to incompressible
 flows.

 \begin{lemma}\label{lemma1}
   For any finite time $T>0$, a solution of the semi-discrete problem
   \eqref{galerkin-stable} satisfies
    \beq \label{weakbvest}
 \int_0^T\sum_{i=1}^{N_h}
 \sum_{j\in\mathcal N_i\backslash\{i\}}
 \left[
\frac{d_{ij}^{\rm add}}2|\bu_j-\bu_i|^2+\frac{d_{ij}^p}2(p_j-p_i)^2
   \right]\dt=C_T,
 \eeq
 where $d_{ij}^{\rm add}=d_{ij}^u-\nu s_{ij}$ and
 $C_T=\sum_{i=1}^{N_h}m_i\eta(\bu_i(0))-
 \sum_{i=1}^{N_h}m_i\eta(\bu_i(T))$.
\end{lemma}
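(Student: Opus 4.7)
The plan is to derive the equality \eqref{weakbvest} as an immediate corollary of the identity \eqref{etadot2} that was established in the proof of the theorem. The key observation is that, once we sum the local energy balance over all nodes $i$, the skew-symmetric fluxes $F_{ij}$ cancel pairwise and only the nonpositive symmetric terms $G_{ij}$ survive. These symmetric terms are precisely (up to a sign) the integrand appearing on the left-hand side of \eqref{weakbvest}.

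Concretely, I would proceed as follows. First, recall from the proof of the theorem that \eqref{galerkin-stable} implies
\begin{equation*}
m_i\td{\eta(\bu_i)}{t}=\sum_{j\in\mathcal N_i\backslash\{i\}}[F_{ij}+G_{ij}],
\end{equation*}
with $F_{ij}=-F_{ji}$ and
\begin{equation*}
G_{ij}=-(d_{ij}^u-\nu s_{ij})\frac{|\bu_j-\bu_i|^2}2-d_{ij}^p\frac{(p_j-p_i)^2}2=-\frac{d_{ij}^{\rm add}}2|\bu_j-\bu_i|^2-\frac{d_{ij}^p}2(p_j-p_i)^2.
\end{equation*}
Next, I would sum this identity over $i=1,\ldots,N_h$. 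Each unordered pair of neighboring indices $\{i,j\}$ contributes $F_{ij}+F_{ji}=0$, so all skew-symmetric contributions vanish, and we obtain the global balance
\begin{equation*}
\td{}{t}\sum_{i=1}^{N_h}m_i\eta(\bu_i)=\sum_{i=1}^{N_h}\sum_{j\in\mathcal N_i\backslash\{i\}}G_{ij}.
\end{equation*}

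Finally, I would integrate this equality from $0$ to $T$. Since the left-hand side integrates to $\sum_i m_i\eta(\bu_i(T))-\sum_i m_i\eta(\bu_i(0))=-C_T$, moving the sign inside yields
\begin{equation*}
\int_0^T\sum_{i=1}^{N_h}\sum_{j\in\mathcal N_i\backslash\{i\}}\left[\frac{d_{ij}^{\rm add}}2|\bu_j-\bu_i|^2+\frac{d_{ij}^p}2(p_j-p_i)^2\right]\dt=-\int_0^T\sum_{i=1}^{N_h}\sum_{j\in\mathcal N_i\backslash\{i\}}G_{ij}\,\dt=C_T,
\end{equation*}
which is the desired equality.

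There is essentially no obstacle here; the lemma is a direct consequence of the telescoping structure already exposed in the theorem. The only nuance worth emphasizing is that under the sign assumptions \eqref{assumTh1} each summand on the left-hand side is nonnegative, so $C_T\ge 0$ and the identity indeed qualifies as a \emph{weak BV} bound on the jumps $|\bu_j-\bu_i|$ and $|p_j-p_i|$ weighted by the stabilization parameters, in the spirit of \cite{coquel1991,preprint}.
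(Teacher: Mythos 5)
Your proposal is correct and follows essentially the same route as the paper's own proof: summing the local identity \eqref{etadot2} over $i$ so that the skew-symmetric fluxes $F_{ij}=-F_{ji}$ cancel, recognizing $G_{ij}=-\frac{d_{ij}^{\rm add}}2|\bu_j-\bu_i|^2-\frac{d_{ij}^p}2(p_j-p_i)^2$, and integrating in time from $0$ to $T$. Your closing remark that \eqref{assumTh1} makes each summand nonnegative (hence $C_T\ge 0$) is a correct observation consistent with the paper's interpretation of \eqref{weakbvest} as a weak BV bound.
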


 \begin{proof}
   Using \eqref{etadot2} and the property that $F_{ij}=-F_{ji}$, we find that
   \beq\label{etasum}
   \sum_{i=1}^{N_h}
   m_i\td{\eta(\bu_i)}{t} =
\sum_{i=1}^{N_h}
   \sum_{j\in\mathcal N_i\backslash\{i\}}G_{ij},
   \eeq
   where
   $$
G_{ij}=-\frac{d_{ij}^{\rm add}}2|\bu_j-\bu_i|^2 -\frac{d_{ij}^p}2(p_j-p_i)^2.
$$
Integrating \eqref{etasum} from $t=0$ to $t=T$, we obtain 
\eqref{weakbvest}.
 \end{proof}

 The importance of Lemma \ref{lemma1} lies in the fact that 
 \eqref{weakbvest} provides a weak bound for the derivatives
 of the velocity and pressure approximations. Specifically, we have
 the following auxiliary result, cf.~\cite{preprint}.

\begin{lemma}\label{lemma2}
 Under the assumption that the sequence of meshes $\{\mathcal T_h\}_{h \searrow 0}$ is shape regular, there exist constants $C_1,C_2>0$ independent of $h$ such that
 $$C_1h|v_h|_{H^1(\Omega)}^2\le
 \sum_{i=1}^{N_h}\sum_{j\in\Nis}
 h^{d-1} |v_j-v_i|^2 \le C_2h|v_h|_{H^1(\Omega)}^2\qquad\forall v_h\in V_h.
 $$
\end{lemma}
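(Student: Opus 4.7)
The plan is to pass to a reference simplex, derive a local spectral equivalence between the $H^1(K)$-seminorm of an affine function and the sum of squared differences of its vertex values, and then assemble using the bounded edge-to-element incidence furnished by shape regularity.

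First I would fix a reference simplex $\hat K$ with vertices $\hat \bx_0,\ldots,\hat \bx_d$. On the $(d+1)$-dimensional space of affine functions on $\hat K$, both $\hat v \mapsto |\hat v|_{H^1(\hat K)}^2$ and $\hat v \mapsto \sum_{0\le k<l\le d}|\hat v(\hat\bx_k)-\hat v(\hat\bx_l)|^2$ are positive semidefinite quadratic forms whose common null space consists exactly of the constants, so they are equivalent on the $d$-dimensional quotient with absolute constants $\hat c_1,\hat c_2>0$. Transporting by the affine map $F_K\colon\hat K\to K$ and using the shape-regularity bounds $\|DF_K\|\sim h_K$, $\|DF_K^{-1}\|\sim h_K^{-1}$, and $|K|\sim h_K^d$, I would obtain
\begin{equation*}
c_1 h_K^{d-2} \sum_{0\le k<l\le d} |v_k-v_l|^2 \le |v_h|_{H^1(K)}^2 \le c_2 h_K^{d-2} \sum_{0\le k<l\le d}|v_k-v_l|^2,
\end{equation*}
with $c_1,c_2$ depending only on the shape-regularity constant and on $d$.

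Next I would sum this local bound over $K\in\mathcal T_h$. Because any two vertices of a simplex form an edge, $\Nis$ coincides with the set of mesh vertices sharing an edge with $\bx_i$, and $\sum_i\sum_{j\in\Nis}|v_j-v_i|^2 = 2\sum_{\text{edges }(i,j)}|v_j-v_i|^2$. Shape regularity also yields a uniform upper bound on the number of elements meeting at any vertex, hence on the multiplicity with which each edge is counted when I reorganize the element-wise sum into an edge-wise one. Finally, invoking the quasi-uniformity implicit in the statement (a single mesh parameter $h$ appears on both sides of the claim), I would replace $h_K^{d-2}$ by $h^{d-2}$ in the resulting equivalence and multiply by $h$ to recover the asserted double inequality.

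The only step that is not purely mechanical is the reference-element spectral equivalence in the first paragraph, which is a finite-dimensional linear-algebra fact: two positive semidefinite quadratic forms on a finite-dimensional space with the same null space are equivalent. Everything else is standard finite-element scaling together with the shape-regularity-based control of edge-to-element incidence, so I do not foresee any real obstacle.
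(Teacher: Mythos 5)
Your argument is correct, but be aware that the paper does not prove this lemma internally at all: its ``proof'' is a pure citation to \cite[Lem.~2.2]{guermond2016b} and \cite[Lem.~2]{preprint}. What you have written out is essentially the standard argument underlying those references, made self-contained: equivalence of two positive semidefinite quadratic forms with common kernel (the constants) on the finite-dimensional space of affine functions on the reference simplex, transport by the affine map $F_K$ with the shape-regularity bounds $\|DF_K\|\le Ch_K$, $\|DF_K^{-1}\|\le Ch_K^{-1}$, $|K|\sim h_K^d$ to get the local equivalence $|v_h|_{H^1(K)}^2\sim h_K^{d-2}\sum_{k<l}|v_k-v_l|^2$, and reassembly of element-wise sums into edge-wise sums using the shape-regularity bound on vertex valence, together with the correct observation that for $\mathbb{P}_1$ on simplices $\mathcal N_i\backslash\{i\}$ is exactly the set of edge neighbors of $\bx_i$ (each unordered edge counted twice). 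So your route is not different in substance from the delegated proofs; what it buys is transparency about the hypotheses, and indeed the one point you handle more carefully than the statement itself deserves emphasis: the lemma assumes only shape regularity, yet the single parameter $h$ on both sides makes it implicitly a quasi-uniformity statement. With $h=\max_K h_K$, the lower bound $C_1h|v_h|^2_{H^1(\Omega)}\le\cdots$ follows from shape regularity alone (since $h_K^{d-2}\le h^{d-2}$ for $d\ge 2$), and for $d=2$ the factor $h_K^{d-2}=1$ renders both directions mesh-size free; but for $d=3$ the upper bound genuinely fails without quasi-uniformity --- take a single element of diameter $\epsilon\ll h$ carrying an $O(1)$ nodal jump, so the weighted edge sum is of order $h^2$ while $h|v_h|^2_{H^1(\Omega)}$ is of order $\epsilon$. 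Your explicit flagging of the implicit quasi-uniformity assumption is therefore the right call and matches the setting in which the cited lemmas are actually established.
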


\begin{proof}
  See \cite[Lem. 2.2]{guermond2016b} and \cite[Lem. 2]{preprint}.
\end{proof}

\subsection{Lax--Wendroff consistency}

The use of row-sum mass lumping on the left-hand side of \eqref{galerkin-staba} is
equivalent to approximating the $L^2(\Omega)$ scalar product
$\int_{\Omega}\pd{\mathbf{u}_h}{t}\cdot\mathbf{v}_h\dx=
(\pd{\mathbf{u}_h}{t},\mathbf{v}_h)_{L^2(\Omega)}$ of the Galerkin weak form \eqref{weakform}
by \cite[Ch. 8]{book}
\beq
\left(\pd{\mathbf{u}_h}{t},\mathbf{v}_h\right)_{L^2_h(\Omega)}=\sum_{i=1}^{N_h}m_i
\mathbf{v}_i\cdot\td{\bu_i}{t}.
\eeq
The discretized momentum equation
\eqref{weakformu} also contains the nonlinear term
\begin{align}\nonumber
a(\bu_h,\bv_h)=& \frac12\int_{\Omega}
[\nabla\cdot(\bu_h\otimes\bu_h)+\bu_h\cdot\nabla\bu_h]\cdot\bv_h
\dx+\nu \int_{\Omega}  \nabla\bu_h :\nabla \bv_h\dx
=\sum_{i=1}^{N_h}\mathbf{v}_i\cdot\sum_{j\in\mathcal N_i}
(a_{ij}+\nu s_{ij})\bu_j\\=&-\sum_{i=1}^{N_h}
\sum_{j=i+1}^{N_h}a_{ij}(\bu_j\cdot\bv_i
-\bu_i\cdot\bv_j)-\sum_{i=1}^{N_h}\sum_{j=i+1}^{N_h}
\nu s_{ij}(\bu_j-\bu_i)\cdot(\bv_j-\bv_i).
\end{align}
In our semi-discrete scheme \eqref{galerkin-stab}, we stabilize $a(\bu_h,\bv_h)$ by subtracting
\begin{align}\nonumber
s_h^u(\bu_h,\bv_h)=&\sum_{i=1}^{N_h}\bv_i\cdot
\sum_{j\in\mathcal N_i}(a_{ij}-\tilde a_{ij}+d_{ij}^u)\bu_j\\
=&-\sum_{i=1}^{N_h}\label{sugeneral}
\sum_{j=i+1}^{N_h}(a_{ij}-\tilde a_{ij})(\bu_j+\bu_i)
\cdot(\bv_j-\bv_i)\\&-\sum_{i=1}^{N_h}\sum_{j=i+1}^{N_h}
d_{ij}^u(\bu_j-\bu_i)\cdot(\bv_j-\bv_i).\nonumber
\end{align}
Here, we used the conservation property \eqref{eq:aux} to derive the second identity.
The linear term
\begin{align}\nonumber
b(\bv_h,p_h)&=\int_{\Omega}\nabla p_h\cdot\bv_h\dx
=\sum_{i=1}^{N_h}\bv_i\cdot
\sum_{j\in\mathcal N_i}\mathbf{c}_{ij}p_j=\sum_{i=1}^{N_h}\bv_i\cdot
\sum_{j=1}^{N_h}\mathbf{c}_{ij}(p_j+p_i)\\
&
=\sum_{i=1}^{N_h}
\sum_{j=i+1}^{N_h}\mathbf{c}_{ij}\cdot(\bv_i-\bv_j)
(p_j+p_i)
\end{align}
of the Galerkin momentum equation remains unchanged. The pressure
is stabilized using
\beq\label{spgeneral}
s_h^p(p_h,q_h)=\sum_{i=1}^{N_h}q_i\sum_{j\in\Nis}d_{ij}^p(p_j-p_i)
=-\sum_{i=1}^{N_h}\sum_{j=i+1}^{N_h}d_{ij}^p(p_j-p_i)(q_j-q_i).
\eeq
  Substituting the Becker--Hansbo definition \eqref{beckerhansbo}
  of $d_{ij}^p$ into \eqref{spgeneral}, we obtain
\beq\label{spbecker}
s_h^p(p_h,q_h)
=\omega[(p_h,q_h)_{L^2(\Omega)}-(p_h,q_h)_{L^2_h(\Omega)}]=-\omega
\sum_{i=1}^{N_h}\sum_{j=i+1}^{N_h}m_{ij}(p_j-p_i)(q_j-q_i).
\eeq

The semi-discrete problem associated with \eqref{galerkin-stab}
can now be written in the generic operator form
\begin{subequations}
  \label{weakform-stab}
\begin{align}
  \left(\pd{\mathbf{u}_h}{t},\mathbf{v}_h\right)_{L^2_h(\Omega)}
  +a(\bu_h,\bv_h)+b(\bv_h,p_h)
  &=s_h^u(\bu_h,\bv_h)
  \qquad\forall \bv_h\in \mathbf V_h,\label{weakform-staba}
  \\
  b(\bu_h,q_h)&=s_h^p(p_h,q_h)\qquad\forall q_h\in V_h.\label{weakform-stabb}
\end{align}
\end{subequations}

The standard finite element
interpolation operator $I_h:C(\bar\Omega)\to V_h$ approximates $v\in C(\bar\Omega)$ by
  a piecewise-linear function $I_hv=\sum_{i=1}^{N_h}v_i\varphi_i\in V_h$
  such that $v_i=v(\mathbf{x}_i)$ (see, e.g., \cite{larsson2003}).
  We say that the stabilization built into the semi-discrete scheme
  \eqref{weakform-stab}
  is \emph{Lax--Wendroff consistent} if
$$
s_h^u(\bu_h,\bv_h)\to 0,\qquad
s_h^p(p_h,q_h)\to 0\qquad \mbox{as}\ h\to 0
$$
for finite element interpolants $\bv_h=I_h\bv\in \mathbf V_h$
and $q_h=I_hq\in V_h$ of smooth test functions $\bv$ and $q$.

\begin{lemma}\label{lemma3}
 Assume that  $\|p_h\|_{L^2(\Omega)}$ is bounded by a constant independent of $h$.
  Then the pressure stabilization term $s_h^p(p_h,q_h)$ defined by \eqref{spbecker}
is Lax--Wendroff consistent.
\end{lemma}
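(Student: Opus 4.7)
The plan is to work directly with the edge-based representation \eqref{spbecker} and apply Cauchy--Schwarz in the edge sum. Setting $q_h = I_h q$ for a smooth test function $q$, this yields
\[
|s_h^p(p_h, I_h q)| \le \omega \Bigl(\sum_{i=1}^{N_h}\sum_{j>i} m_{ij}(p_j-p_i)^2\Bigr)^{1/2} \Bigl(\sum_{i=1}^{N_h}\sum_{j>i} m_{ij}(q(\bx_j) - q(\bx_i))^2\Bigr)^{1/2},
\]
and it remains to bound the first factor uniformly in $h$ using only the assumed $L^2$-bound on $p_h$, while extracting an $O(h)$ factor from the second using the smoothness of $q$.

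For the pressure factor, I would use $(p_j-p_i)^2 \le 2(p_i^2+p_j^2)$, the symmetry $m_{ij}=m_{ji}$, and the identity $\sum_j m_{ij} = m_i$ to obtain
\[
\sum_{i=1}^{N_h}\sum_{j>i} m_{ij}(p_j-p_i)^2 \le 2\sum_{i=1}^{N_h} m_i p_i^2 = 2\|p_h\|^2_{L^2_h(\Omega)}.
\]
Since the lumped-mass $L^2$-inner product is norm-equivalent to the standard $L^2$-inner product on $V_h$ for shape-regular meshes, the right-hand side is bounded uniformly in $h$ by hypothesis. For the test function factor, smoothness of $q$ together with the mesh bound $|\bx_j - \bx_i| \le C h$ for $j \in \mathcal N_i$ gives $|q(\bx_j)-q(\bx_i)| \le C h \|\nabla q\|_{L^\infty(\Omega)}$, which combined with $\sum_{i,j} m_{ij} = |\Omega|$ yields $\sum_{i}\sum_{j>i} m_{ij}(q(\bx_j)-q(\bx_i))^2 \le C h^2 \|\nabla q\|_{L^\infty(\Omega)}^2 |\Omega|$. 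Combining the two bounds gives $|s_h^p(p_h, I_h q)| \le C h \to 0$, as required.

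The main subtlety is that the hypothesis provides only an $L^2$-bound on $p_h$ and no derivative control whatsoever, so a naive attempt based on the standard mass-lumping error estimate $|(p_h, q_h)_{L^2(\Omega)} - (p_h, q_h)_{L^2_h(\Omega)}| \le C h^2 |p_h|_{H^1(\Omega)} |q_h|_{H^1(\Omega)}$ would be inconclusive. The strategy above circumvents this by extracting the full gain of $h$ from the smooth test function side, so that only an $L^2$-bound on $p_h$ is needed. As a fallback one could instead invoke Lemma \ref{lemma2} combined with the inverse inequality $|p_h|_{H^1(\Omega)} \le C h^{-1}\|p_h\|_{L^2(\Omega)}$ for piecewise-linear $p_h$, but the direct edge-sum bound above renders this unnecessary.
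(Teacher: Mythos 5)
Your proof is correct, but it takes a genuinely different route from the paper. The paper's proof is abstract: it introduces the lumping-error functional $e_h(q_h)=\sup_{p_h\in V_h}|(p_h,q_h)_{L^2(\Omega)}-(p_h,q_h)_{L^2_h(\Omega)}|/\|p_h\|_{L^2(\Omega)}$ and invokes a cited estimate $e_h(q_h)\le Ch|q_h|_{H^1(\Omega)}$, which immediately yields $|s_h^p(p_h,q_h)|\le C\omega h\|p_h\|_{L^2(\Omega)}|q_h|_{H^1(\Omega)}$ together with $|I_hq|_{H^1(\Omega)}\le C\|q\|_{H^2(\Omega)}$. You instead prove the same $\mathcal O(h)$ bound from scratch by a weighted Cauchy--Schwarz inequality in the edge sum of \eqref{spbecker}, bounding the pressure factor by $2\|p_h\|_{L^2_h(\Omega)}^2$ via $(p_j-p_i)^2\le 2(p_i^2+p_j^2)$ and $\sum_j m_{ij}=m_i$, and extracting the factor $h$ from the test-function side via $|q(\bx_j)-q(\bx_i)|\le Ch\|\nabla q\|_{L^\infty(\Omega)}$ and $\sum_{i,j}m_{ij}=|\Omega|$. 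This is self-contained (no external corollary needed), requires only $q\in W^{1,\infty}$ rather than $H^2$ regularity of the test function, and its norm-equivalence step $\|p_h\|_{L^2_h(\Omega)}\le C\|p_h\|_{L^2(\Omega)}$ is standard for $\mathbb{P}_1$ Lagrange elements; note that your Cauchy--Schwarz step silently uses $m_{ij}\ge 0$, which does hold here because the basis functions are nonnegative, but is worth stating. One quibble with your closing commentary: the ``naive'' symmetric estimate $Ch^2|p_h|_{H^1(\Omega)}|q_h|_{H^1(\Omega)}$ is not actually inconclusive --- combined with the inverse inequality $|p_h|_{H^1(\Omega)}\le Ch^{-1}\|p_h\|_{L^2(\Omega)}$ it recovers precisely the paper's bound $Ch\|p_h\|_{L^2(\Omega)}|q_h|_{H^1(\Omega)}$, so the fallback you sketch at the end is essentially the paper's argument in disguise; your direct edge-sum computation simply makes the asymmetry between the two arguments explicit without appeal to inverse estimates.
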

\begin{proof}
  Following the proofs of \cite[Lem. 8.72]{book} and \cite[Cor. 8.74]{book}, we define
  $$
  e_h(q_h)=
  \sup_{p_h\in V_h}
  \frac{|(p_h,q_h)_{L^2(\Omega)}-(p_h,q_h)_{L^2_h(\Omega)}|}{\|p_h\|_{L^2(\Omega)}}\qquad
  \forall q_h\in V_h
  $$
  and use the fact that the estimate \cite[Cor. 8.74]{book}
  $$
 e_h(q_h)\le Ch|q_h|_{H^1(\Omega)}
 $$
 holds for some constant $C>0$ independent of $h$. Since
 $|s_h^p(p_h,q_h)|\le \omega \|p_h\|_{L^2(\Omega)}e_h(q_h)$, it
 follows that
$$|s_h^p(p_h,q_h)|\le C\omega h\|p_h\|_{L^2(\Omega)}|q_h|_{H^1(\Omega)},$$
 where $\|p_h\|_{L^2(\Omega)}$ is bounded by assumption and
 $|q_h|_{H^1(\Omega)}=|I_hq|_{H^1(\Omega)}
 \le C\|q\|_{H^2(\Omega)}$. This estimate of
 the error due to mass lumping
 proves the Lax--Wendroff consistency
 of $s_h^p(p_h,q_h)$.
\end{proof}

Adapting the flux correction schemes
presented in \cite{kuzmin2012a,book,preprint}
to the incompressible case, we define
\beq\label{dupw}
d_{ij}^u=\begin{cases}
(1-\alpha_{ij})d_{ij}^{\max} & \mbox{if}\ j\ne i,\\
-\sum_{k\in\Nis}d_{kj}^u  & \mbox{if}\ j=i,
\end{cases}
\eeq
where $\alpha_{ij}\in[0,1]$ is an appropriately chosen correction factor and
\beq
d_{ij}^{\max}=\begin{cases}
|a_{ij}| & \mbox{if}\ j\ne i,\\
-\sum_{k\in\Nis}d_{kj}^{\max}  & \mbox{if}\ j= i
\end{cases}
\eeq
is the artificial viscosity coefficient of the low-order
\emph{discrete upwinding} \cite{kuzmin2012a} method. Let
\beq
d_h(\bu_h,\bv_h)=\sum_{i=1}^{N_h}\sum_{j=i+1}^{N_h}
d_{ij}^u(\bu_j-\bu_i)\cdot(\bv_j-\bv_i).
\eeq
Then for $d_{ij}^{\max}=\mathcal O(h^{d-1})$ and
any choice of $\alpha_{ij}\in[0,1]$, we have
the estimate \cite{barrenechea2018}
\beq\label{dhest}
|d_h(\bu_h,\bv_h)|\le\sqrt{d_h(\bu_h,\bu_h)}\sqrt{d_h(\bv_h,\bv_h)}
\le Ch|\bu_h|_{H^1(\Omega)}|\bv_h|_{H^1(\Omega)}.
\eeq
If Lemmas \ref{lemma1}
and \ref{lemma2} are applicable, then $h^{\frac12}|\bu_h|_{H^1(\Omega)}$
is bounded and $$|d_h(\bu_h,\bv_h)|\le 
Ch^{\frac12}|\bv_h|_{H^1(\Omega)}=Ch^{\frac12}|I_h\bv|_{H^1(\Omega)}
\le
Ch^{\frac12}\|\bv\|_{H^2(\Omega)}.$$ We use this estimate of
the consistency error in the proof of the following lemma.

\begin{lemma}\label{lemma4}
  
  The velocity stabilization term $s_h^u(\bu_h,\bv_h)$ defined by \eqref{sugeneral} with
$d_{ij}^{\max}=\mathcal O(h^{d-1})$ is Lax--Wendroff consistent under the assumptions of Lemmas \ref{lemma1} and \ref{lemma2}. 
\end{lemma}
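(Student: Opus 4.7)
The plan is to split $s_h^u(\bu_h, I_h\bv)$ into an artificial-diffusion part $s_h^{u,\mathrm{diff}} := -d_h(\bu_h,\bv_h)$ and a convective quadrature part $s_h^{u,\mathrm{conv}} := -\sum_{i<j}(a_{ij}-\tilde a_{ij})(\bu_i+\bu_j)\cdot(\bv_j-\bv_i)$ as in \eqref{sugeneral}, with $\bv_h=I_h\bv$, and to bound each separately.

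For $s_h^{u,\mathrm{diff}}$ the analysis is essentially already laid out in the paragraph immediately preceding the lemma: estimate \eqref{dhest} gives $|d_h(\bu_h,\bv_h)|\le Ch|\bu_h|_{H^1(\Omega)}|\bv_h|_{H^1(\Omega)}$; Lemmas \ref{lemma1} and \ref{lemma2} (together with $d_{ij}^u\le d_{ij}^{\rm add}$) supply the weak BV bound $h^{1/2}|\bu_h|_{H^1(\Omega)}\le C$ in the relevant time-integrated sense, and standard nodal interpolation gives $|\bv_h|_{H^1(\Omega)}\le C\|\bv\|_{H^2(\Omega)}$. Combining these yields $|s_h^{u,\mathrm{diff}}|\le Ch^{1/2}\|\bv\|_{H^2(\Omega)}\to 0$.

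For $s_h^{u,\mathrm{conv}}$ the idea is to exploit two structural identities of the matrix $a_{ij}-\tilde a_{ij}$: its skew-symmetry $a_{ij}-\tilde a_{ij}=-(a_{ji}-\tilde a_{ji})$ and the zero row sum $\sum_j(a_{ij}-\tilde a_{ij})=0$ implied by \eqref{tildesums}. These allow rewriting $s_h^{u,\mathrm{conv}}$ in the equivalent symmetrized form $\sum_{i<j}(a_{ij}-\tilde a_{ij})(\bv_i+\bv_j)\cdot(\bu_j-\bu_i)$, in which a genuine velocity increment appears. A direct element-wise computation for the $\mathbb{P}_1$ basis gives the scaling $|a_{ij}-\tilde a_{ij}|=\mathcal{O}(h^{d-1})$ with a constant depending on $\|\bu_h\|_{L^\infty}$. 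Cauchy--Schwarz with weights $h^{d-1}$ then produces a factor $\bigl(\sum h^{d-1}|\bu_j-\bu_i|^2\bigr)^{1/2}\le Ch^{1/2}|\bu_h|_{H^1(\Omega)}$ via Lemma \ref{lemma2}, controlled after time integration by weak BV, and a second factor bounded analogously in terms of $\|\bv\|_{H^2(\Omega)}$, together yielding $|s_h^{u,\mathrm{conv}}|\to 0$.

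The main obstacle lies precisely in this symmetrization step: with only the crude pointwise bound $|a_{ij}-\tilde a_{ij}|\le Ch^{d-1}\|\bu_h\|_{L^\infty}$ and the trivial estimate $|\bu_i+\bu_j|\le 2\|\bu_h\|_{L^\infty}$, a single Cauchy--Schwarz applied to the original form of $s_h^{u,\mathrm{conv}}$ yields only an $\mathcal{O}(1)$ bound rather than a vanishing one, because the per-edge quadrature error carries no velocity increment---it is generally nonzero even when $\bu_h$ is locally constant, with cancellation occurring only globally through the row and column sum identities. Extracting the missing $h^{1/2}$ factor thus requires the symmetrization above to trade $(\bu_i+\bu_j)$ for a genuine increment $(\bu_j-\bu_i)$ that weak BV can control. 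This is the same structural mechanism developed for the compressible Euler case in~\cite{preprint}, whose adaptation to the incompressible setting treated here is routine.
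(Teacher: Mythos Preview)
Your treatment of the diffusion part $d_h(\bu_h,\bv_h)$ is fine and matches the paper. The convective part, however, has a genuine gap.

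The symmetrization you perform is algebraically correct, but it is a zero-sum trade: you gain the velocity increment $(\bu_j-\bu_i)$ at the cost of losing the test-function increment $(\bv_j-\bv_i)$. With only the crude bound $|a_{ij}-\tilde a_{ij}|\le Ch^{d-1}\|\bu_h\|_{L^\infty}$, Cauchy--Schwarz on the symmetrized form gives one factor $(\sum h^{d-1}|\bu_j-\bu_i|^2)^{1/2}\le Ch^{1/2}|\bu_h|_{H^1}$ as you say, but the second factor is $(\sum h^{d-1}|\bv_i+\bv_j|^2)^{1/2}$, and since $|\bv_i+\bv_j|$ is \emph{not} an increment this sum is of order $h^{-1}$ (there are $\mathcal{O}(h^{-d})$ edges), so the second factor is $\mathcal{O}(h^{-1/2})$---not ``bounded analogously in terms of $\|\bv\|_{H^2}$.'' The net estimate is $\mathcal{O}(1)$, exactly as for the unsymmetrized form, so the symmetrization by itself buys nothing.

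The underlying misconception is your claim that the per-edge quadrature error ``is generally nonzero even when $\bu_h$ is locally constant.'' In fact it \emph{does} vanish: the paper shows
\[
a_{ij}-\tilde a_{ij}=\tfrac12\int_{\Omega}\bigl[\varphi_i(\bu_h-\bu_i)\cdot\nabla\varphi_j-\varphi_j(\bu_h-\bu_j)\cdot\nabla\varphi_i\bigr]\dx,
\]
which is zero for locally constant $\bu_h$ and, more to the point, already carries a hidden velocity increment of size $\mathcal{O}(h^{d-1}\max_{k\sim i}|\bu_k-\bu_i|)$. The paper exploits this by rewriting $s_h^{u,\mathrm{conv}}$ as the element-wise lumping error $\tfrac12\int(\bv_h\otimes\bu_h-I_h(\bv_h\otimes\bu_h)):\nabla\bu_h-\tfrac12\int(\bu_h\otimes\bu_h-I_h(\bu_h\otimes\bu_h)):\nabla\bv_h$ and applying the estimate $|(u_h,v_h)_{L^2(K^e)}-(u_h,v_h)_{L_h^2(K^e)}|\le Ch\|u_h\|_{L^2(K^e)}|v_h|_{H^1(K^e)}$ from Lemma~\ref{lemma3}. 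This furnishes a second velocity-gradient factor (one from $a_{ij}-\tilde a_{ij}$, one from $\nabla\bu_h$ or the explicit $(\bv_j-\bv_i)$), which together with the weak BV bound yields $\int_0^T|s_h^{u,\mathrm{conv}}|\dt\to 0$. Your argument can be repaired by using this sharper bound on $a_{ij}-\tilde a_{ij}$; the symmetrization step is then unnecessary.
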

\begin{proof}
  The stabilization term \eqref{sugeneral} of the momentum
equation \eqref{weakform-staba}
  admits the decomposition
$$
s_h^u(\bu_h,\bv_h)=\sum_{i=1}^{N_h}\bv_i\cdot
\sum_{j=i}^{N_h}(a_{ij}-\tilde a_{ij})\bu_j-d_h(\bu_h,\bv_h),
$$
where
\begin{align*}
a_{ij}-\tilde a_{ij}
&=\frac12\int_{\Omega}[\varphi_i\bu_h\cdot\nabla\varphi_j
-\varphi_j\bu_h\cdot\nabla\varphi_i]\dx
-\frac{\bu_j+\bu_i}2\cdot\int_{\Omega}\varphi_i\nabla\varphi_j\dx\\
&=\frac12\int_{\Omega}[\varphi_i(\bu_h-\bu_i)\cdot\nabla\varphi_j
  -\varphi_j(\bu_h-\bu_j)\cdot\nabla\varphi_i]\dx
\end{align*}
and therefore
\begin{align*}
s_h^u(\bu_h,\bv_h)=&\frac12
\int_{\Omega}(\bv_h\otimes\bu_h-I_h(\bv_h\otimes\bu_h)):\nabla\bu_h\dx\\
-&\frac12\int_{\Omega}[
  \bu_h\otimes\bu_h-I_h(\bu_h\otimes\bu_h)]:\nabla\bv_h
\dx-d_h(\bu_h,\bv_h),\\
=&\frac12\sum_{e=1}^{E_h}\nabla\bu_h|_{K^e}:\left[
\int_{K^e}(\bv_h\otimes\bu_h-I_h(\bv_h\otimes\bu_h))\dx\right]\\
-&\frac12\sum_{e=1}^{E_h}\nabla\bv_h|_{K^e}:\left[
\int_{K^e}[
  \bu_h\otimes\bu_h-I_h(\bu_h\otimes\bu_h)]
\dx\right]-d_h(\bu_h,\bv_h).
\end{align*}
For scalar linear polynomials $u_h,v_h\in\mathbb{P}_1(K^e)$, we can show that
$$
\left|\int_{K^e}(v_hu_h-I_h(v_hu_h))\dx\right|
=|(v_h,u_h)_{L^2(K^e)}-(v_h,u_h)_{L_h^2(K^e)}|\le Ch\|u_h\|_{L^2(K^e)}
|v_h|_{H^1(K^e)}
$$
as in  Lemma \ref{lemma3}. This enables us to estimate individual
contributions of $K\in\mathcal T_h$ to $s_h^u(\bu_h,\bv_h)$. To
complete the proof, we exploit the boundedness of
$|\bv_h|_{H^1(\Omega)}=
|I_h\bv|_{H^1(\Omega)}\le C \|\bv\|_{H^2(\Omega)}$
for  smooth $\bv$, and
(similarly to \cite[Thm. 3]{preprint}) the weak BV bound for
$h^{\frac12}|\bu_h|_{H^1(\Omega)}$ provided by Lemmas \ref{lemma1} and \ref{lemma2}.
\end{proof}

\begin{lemma}\label{lemma5}
  The Galerkin part of the scheme
  \eqref{weakform-stab} is Lax--Wendroff consistent in the sense that
 \begin{gather*}
  b(\bv_h,p_h)=-\int_{\Omega} p_h(\nabla\cdot\bv_h)\dx,\qquad
  b(\bu_h,q_h)=\int_{\Omega} \bu_h\cdot\nabla q_h\dx,\\
   \left| a(\bu_h,\bv_h)-
\int_{\Omega}(\nu
\nabla\bu_h-\bu_h\otimes\bu_h):\nabla \bv_h\dx
\right|\to 0 \qquad \mbox{as}\ h\to 0.
\end{gather*}
\end{lemma}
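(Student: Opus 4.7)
Claims (i) and (ii) are purely algebraic. Claim (ii) is nothing but the definition $b(\bv_h,p_h)=\int_\Omega\nabla p_h\cdot\bv_h\dx$ relabelled with $(\bv_h,p_h)\leftrightarrow(\bu_h,q_h)$, so it requires no work. Claim (i) is a single integration by parts on the same expression; since $p_h,\bv_h\in V_h\subset H^1(\Omega)$ and the boundary contribution $\int_\Gamma p_h\,\bv_h\cdot\bn\ds$ cancels by periodicity, one immediately obtains $b(\bv_h,p_h)=-\int_\Omega p_h(\nabla\cdot\bv_h)\dx$.

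For claim (iii), my plan is first to convert $a(\bu_h,\bv_h)$ into the more conventional non-conservative shape. Using the pointwise identity $\nabla\cdot(\bu_h\otimes\bu_h)=(\nabla\cdot\bu_h)\bu_h+\bu_h\cdot\nabla\bu_h$ and one integration by parts on the resulting $\bu_h\cdot\nabla\bu_h$ term (boundary-free by periodicity), a short calculation yields the exact identity
\begin{align*}
a(\bu_h,\bv_h)=\nu\int_\Omega\nabla\bu_h:\nabla\bv_h\dx-\int_\Omega(\bu_h\otimes\bu_h):\nabla\bv_h\dx-\frac12\int_\Omega(\nabla\cdot\bu_h)(\bu_h\cdot\bv_h)\dx.
\end{align*}
Thus the quantity to be shown to vanish is precisely $R_h:=-\frac12\int_\Omega(\nabla\cdot\bu_h)(\bu_h\cdot\bv_h)\dx$.

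To control $R_h$, I would exploit the discrete incompressibility relation \eqref{weakform-stabb}: after one IBP (again boundary-free by periodicity) it reads $\int_\Omega(\nabla\cdot\bu_h)q_h\dx=-s_h^p(p_h,q_h)$ for every $q_h\in V_h$. Since $\bu_h\cdot\bv_h$ is piecewise $\mathbb{P}_2$ rather than a member of $V_h$, I split
\begin{align*}
\int_\Omega(\nabla\cdot\bu_h)(\bu_h\cdot\bv_h)\dx=\int_\Omega(\nabla\cdot\bu_h)I_h(\bu_h\cdot\bv_h)\dx+\int_\Omega(\nabla\cdot\bu_h)\bigl[\bu_h\cdot\bv_h-I_h(\bu_h\cdot\bv_h)\bigr]\dx.
\end{align*}
The first summand equals $-s_h^p(p_h,I_h(\bu_h\cdot\bv_h))$, to which the estimate $|s_h^p(p_h,q_h)|\le C\omega h\|p_h\|_{L^2(\Omega)}|q_h|_{H^1(\Omega)}$ from the proof of Lemma \ref{lemma3} applies. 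The second is an elementwise $\mathbb{P}_2$ interpolation residual handled by Cauchy--Schwarz together with a standard $O(h^2)$ elementwise bound that exploits $|\bu_h\cdot\bv_h|_{H^2(K)}\le C\|\nabla\bu_h\|_{L^\infty(K)}\|\nabla\bv_h\|_{L^\infty(K)}|K|^{1/2}$.

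The main obstacle is the scaling bookkeeping. The weak BV bound from Lemmas \ref{lemma1}--\ref{lemma2} gives only $h^{1/2}|\bu_h|_{H^1(\Omega)}=O(1)$, so $\|\nabla\cdot\bu_h\|_{L^2(\Omega)}$ blows up like $h^{-1/2}$. Both pieces of the splitting must recover enough powers of $h$ to beat this. For the first piece, the product rule yields $|I_h(\bu_h\cdot\bv_h)|_{H^1(\Omega)}\le C\|\bu_h\|_{L^\infty(\Omega)}|\bv_h|_{H^1(\Omega)}+C\|\bv_h\|_{L^\infty(\Omega)}|\bu_h|_{H^1(\Omega)}=O(h^{-1/2})$, so the $O(h)$ factor from Lemma \ref{lemma3} combines to give an overall $O(h^{1/2})$. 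For the second, the elementwise $\mathbb{P}_2$ interpolation bound sums to $\|\bu_h\cdot\bv_h-I_h(\bu_h\cdot\bv_h)\|_{L^2(\Omega)}=O(h^{3/2})$, which paired with the $O(h^{-1/2})$ bound on $\|\nabla\cdot\bu_h\|_{L^2(\Omega)}$ again yields $O(h)$. The argument therefore requires, beyond Lemmas \ref{lemma1}--\ref{lemma4}, an a priori $L^\infty$ bound on $\bu_h$ (customary in Lax--Wendroff analyses of this type) and the standard $|\bv_h|_{H^1(\Omega)}\le C\|\bv\|_{H^2(\Omega)}$ estimate for the interpolant of the smooth test function.
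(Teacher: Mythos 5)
Your proposal is correct and follows essentially the same route as the paper: the vector identity $\nabla\cdot(\bu_h\otimes\bu_h)=\bu_h\cdot\nabla\bu_h+(\nabla\cdot\bu_h)\bu_h$ reduces claim (iii) to the residual $R_h$ involving $(\nabla\cdot\bu_h)(\bu_h\cdot\bv_h)$, which is then split via the interpolant $I_h(\bu_h\cdot\bv_h)$, with the first piece identified as $-s_h^p(p_h,I_h(\bu_h\cdot\bv_h))$ through \eqref{weakform-stabb} and estimated by Lemma~\ref{lemma3}, and the second treated as an elementwise interpolation error exactly as the paper treats its elementwise lumping errors (your $\mathbb{P}_2$-interpolation bound is equivalent, since $\nabla\cdot\bu_h$ is elementwise constant for $\mathbb{P}_1$ on simplices). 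Your explicit scaling bookkeeping against the $h^{-1/2}$ blow-up of $|\bu_h|_{H^1(\Omega)}$ and the a priori $L^\infty$ bound on $\bu_h$ are left implicit in the paper's terser proof but are consistent with the uniform-boundedness assumptions it invokes later.
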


\begin{proof}
  In view of the vector identity
  $\nabla\cdot (\bu_h\otimes\bu_h)=\bu_h\cdot\nabla\bu_h
  +\bu_h\nabla\cdot\bu_h$,
  it is sufficient to show that the consistency error
  $R_h=
  \int_{\Omega}(\nabla\cdot\bu_h)\bu_h\cdot\bv_h\dx
  $ vanishes in the limit $h\to 0$.
  By \eqref{weakform-stabb}, we have
  \begin{align*}
  R_h&=\int_{\Omega}(\nabla\cdot\bu_h) I_h(\bu_h\cdot\bv_h)\dx
  +
  \int_{\Omega}(\nabla\cdot\bu_h)[\bu_h\cdot\bv_h-
    I_h(\bu_h\cdot\bv_h)] \dx\\
  &=-s_h^p(p_h,I_h(\bu_h\cdot\bv_h))+
 \sum_{e=1}^{E_h} (\nabla\cdot\bu_h)|_{K^e}\left[
  (\bu_h,\bv_h)_{L^2(K^e)}
  - (\bu_h,\bv_h)_{L_h^2(K^e)}\right].
  \end{align*}
  To complete the proof, we invoke Lemma \ref{lemma3} and estimate the lumping error
  as in Lemma \ref{lemma4}.
  \end{proof}
  
We are now ready to formulate the main consistency result of this section. In 
the following theorem, functions belonging to the space
$C^2_c(\bar\Omega \times [0,T))$ have compact support in time.

\begin{theorem}[Lax--Wendroff consistency of the energy-stable FEM]\label{th_Lax}
  Let $T>0$ be a final time.
  Assume that Lemmas \ref{lemma1}--\ref{lemma5} are applicable, and a
  sequence of approximations $(\bu_h,p_h)$ obtained with
 \eqref{weakform-stab}
  converges to a limit 
  $(\bu,p)$ strongly. Then $(\bu,p)$ is a weak solution of problem
  \eqref{eq_incompressible} in the sense that
\begin{subequations}
\label{weaksolu}
\begin{align}  \nonumber
\int_0^T\int_{\Omega}\left[\bu\cdot\pd{\bv}{t}+
  (\bu\otimes\bu-\nu
  \nabla\bu):\nabla \bv+ p\nabla\cdot\bv\right]\dx\dt
+\int_{\Omega}\bu(\bx,0)\cdot\bv(\bx,0)\dx&=0,\\
\int_{\Omega} \bu\cdot \nabla q&=0
\end{align}
holds for all test functions $\mathbf{v}\in [C^2_c(\bar\Omega \times [0,T))]^d$
and $q\in C^1(\bar\Omega)$
that are periodic in space.
\end{subequations}
  
\end{theorem}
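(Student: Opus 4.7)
The plan is to test the semi-discrete identities \eqref{weakform-stab} with the nodal interpolants $\bv_h(\cdot,t)=I_h\bv(\cdot,t)$ and $q_h=I_hq$ of the smooth test functions, integrate in time over $(0,T)$, and pass to the limit $h\to 0$ term by term using the consistency results of Lemmas \ref{lemma3}--\ref{lemma5}.

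First I would integrate the discrete time derivative by parts in $t$, exploiting the compact temporal support $\bv(\cdot,T)=\mathbf{0}$, to obtain
\begin{align*}
\int_0^T\left(\pd{\bu_h}{t},I_h\bv\right)_{L^2_h(\Omega)}\dt
=-\int_0^T\left(\bu_h,I_h\pd{\bv}{t}\right)_{L^2_h(\Omega)}\dt
-\left(\bu_h(\cdot,0),I_h\bv(\cdot,0)\right)_{L^2_h(\Omega)},
\end{align*}
and then replace each lumped inner product by its exact $L^2(\Omega)$ counterpart, the difference being controlled by the elementwise $\mathcal{O}(h)$ mass-lumping estimate from the proof of Lemma \ref{lemma3} and the $H^2$ regularity of $\bv$. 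Lemma \ref{lemma5} then expresses $b(\bv_h,p_h)$, $b(\bu_h,q_h)$ and $a(\bu_h,\bv_h)$ as their target continuous counterparts up to residuals that vanish as $h\to 0$, while Lemmas \ref{lemma4} and \ref{lemma3} show that the stabilization contributions $s_h^u(\bu_h,\bv_h)$ and $s_h^p(p_h,q_h)$ tend to zero (the uniform $L^2$ bound on $p_h$ needed for Lemma \ref{lemma3} follows from the assumed strong convergence of $p_h$).

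After these manipulations the discrete equations take the shape of \eqref{weaksolu} with an $o(1)$ defect, and the passage to the limit reduces to analyzing a finite list of bilinear and trilinear pairings. The strong convergences $\bu_h\to\bu$, $p_h\to p$ combined with $I_h\bv\to\bv$ and $\nabla I_h\bv\to\nabla\bv$ in $L^2$ for smooth $\bv$ dispose of all linear terms. The viscous contribution $\nu\int_0^T\int_\Omega\nabla\bu_h:\nabla I_h\bv\dx\dt$ is handled by observing that Lemma \ref{lemma1} (for $\nu>0$) yields an $L^2$ bound on $\nabla\bu_h$, hence weak compactness, whose weak limit equals $\nabla\bu$ by the strong convergence of $\bu_h$; one then pairs this weakly convergent sequence with the strong convergence $\nabla I_h\bv\to\nabla\bv$. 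The quadratic term $\int_0^T\int_\Omega\bu_h\otimes\bu_h:\nabla I_h\bv\dx\dt$ passes to its counterpart via a Cauchy--Schwarz split combined with strong convergence of $\bu_h$ in $L^2((0,T)\times\Omega)$ and uniform boundedness in $L^\infty(0,T;L^2(\Omega))$ afforded by Remark \ref{remark_energy}. The continuity equation in \eqref{weaksolu} is obtained analogously by integrating \eqref{weakform-stabb} tested against $I_hq$ and invoking Lemma \ref{lemma3}.

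The main obstacle is the quadratic convective term: genuine \emph{strong} convergence of $\bu_h$ is needed to identify the limit of $\bu_h\otimes\bu_h$, and relaxing this hypothesis is exactly the motivation for the dissipative weak solution framework developed in Section \ref{sec:weak}. A secondary technical point is the bookkeeping of the mass-lumping residuals produced by the time-wise integration by parts, in particular for the initial term $(\bu_h(\cdot,0),I_h\bv(\cdot,0))_{L^2_h(\Omega)}$; these residuals are routinely absorbed using the nodal quadrature estimate already invoked in the proof of Lemma \ref{lemma3}.
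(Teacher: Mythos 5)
Your proposal is correct and follows essentially the same route as the paper, whose own proof is only a two-sentence sketch: interpolate the test functions, estimate the mass-lumping error in the time-derivative term as in \cite[Thm.~3]{preprint} (your explicit temporal integration by parts plus nodal-quadrature estimate is exactly that step spelled out), and dispatch the remaining terms via Lemmas \ref{lemma1}--\ref{lemma5}. The only cosmetic slip is citing Remark~\ref{remark_energy} for the uniform $L^\infty(0,T;L^2(\Omega))$ bound on $\bu_h$, since that remark concerns the unstabilized Galerkin scheme; for \eqref{weakform-stab} the bound follows from the energy decay of Theorem~1 (or Lemma~\ref{lemma1}), and in any case it is already implied by the assumed strong convergence.
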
  

\begin{proof}
  Let $\bv_h=I_h\bv$ and $q_h=I_h q$ for test functions $\bv$ and $q$ that meet
  the assumptions of the theorem.
  The consistency error due to mass lumping in the time derivative term
  can be estimated as in \cite[Thm. 3]{preprint}. The Lax--Wendroff 
  consistency of the remaining terms can be shown using
  Lemmas \ref{lemma1}--\ref{lemma5}.
\end{proof}

\section{Convergence analysis}
\label{sec:weak}

In this section, we adapt the convergence theory developed in \cite[Sec. 5]{preprint} for flux-corrected finite element discretizations of the compressible Euler equations to the incompressible case, i.e., to the vanishing viscosity limit of problem \eqref{eq_incompressible}. The existence and uniqueness of a classical solution to
\begin{equation}\label{eq_incompressible_Euler}
\begin{aligned}
  \pd{\bu}{t}+\frac12[\nabla\cdot(\bu\otimes\bu)+\bu\cdot\nabla\bu]
  +\nabla p&=0&\qquad&\mbox{in}\ \Omega\times(0,\infty),\\
\nabla\cdot\bu&=0&\qquad&\mbox{in}\ \Omega\times(0,\infty), \\
\bu( \cdot, 0)= \bu_0, \qquad \nabla \cdot\bu_0&=0&\qquad&\mbox{in}\ \Omega
\end{aligned}
\end{equation}
can be guaranteed under suitable assumptions on the initial conditions \cite{kato1984,majda1984}, e.g., for $\bu_0 \in H^k$. If these assumptions are violated, weak solutions of the incompressible Euler equations \eqref{eq_incompressible_Euler} may develop singularities, such as blow-ups in vorticity. The framework of classical and weak solutions is insufficient for an adequate mathematical description of turbulent or highly irregular flows. Therefore, the concept of measure-valued solutions was introduced by DiPerna  \cite{diperna1985compensated} already in the 1980s. Following a more recent trend  \cite{arun2024asymptoticpreservingfinitevolume,brenier2011, lanthaler2021computation,wiedemann2017weak}, we analyze \textit{dissipative measure-valued} (DMV) solutions in this article. In situations where the classical solution concept breaks down, DMV solutions provide an alternative that handles energy dissipation, oscillations and concentrations in a physics-compatible manner.

To give a formal definition of dissipative measure-valued solutions to the incompressible Euler system \eqref{eq_incompressible_Euler}, we first introduce the necessary notation. Let \( \tilde{\bu} \) represent a generic element of the phase space \( \mathcal{F}_{\text{inc}} = \mathbb{R}^d \). The space of all probability measures on \( \mathcal{F}_{\text{inc}} \) is denoted by  $\mathcal{P}(\mathcal{F})$ in what follows.  The notation $\mathcal{M}^+(\overline{\Omega})$ is used for the set of all positive  \textit{Radon measures} that can be identified with the space of all linear forms on $C_c(\overline{\Omega}).$ If $\overline{\Omega}$ is compact, then
$[C_c(\overline{\Omega})]^*=\mathcal{M}(\overline{\Omega})$. Furthermore, we denote by $\mathcal{M}(\overline{\Omega},\R^{d\times d}_{\mathrm{sym}})$ the set of symmetric matrix-valued measures and by $\mathcal{M}^+(\overline{\Omega}; \R^{d\times d}_{\mathrm{sym}})$ the set of symmetric positive-definite matrix-valued measures, i.e.,
\begin{align*}
\mathcal{M}^+(\overline{\Omega}, \R^{d\times d}_{\mathrm{sym}}) = \bigg\{& \nu \in \mathcal{M}(\overline{\Omega}, \R^{d\times d}_{\mathrm{sym}})
\bigg|
\int_{\overline{\Omega}} \phi(\xi \otimes\xi): \operatorname{d} \nu\geq0  \text{ for any } \xi \in \R^d, \phi \in [C_c(\overline{\Omega})]^{d\times d}, \phi\geq 0
 \bigg\}.
\end{align*}
We are now ready to list the properties that define a DMV solution
in the work of Arun et al. \cite{arun2024asymptoticpreservingfinitevolume}.
\begin{definition}[Dissipative measure-valued solution]\label{def_DMV}
A family of probability measures $$ \mathcal U = \{ \mathcal U_{\mathbf{x},t} \}_{(\mathbf{x},t) \in  \Omega \times (0,T)} \in L^\infty_{\mathrm{weak-}*}( \Omega \times (0, T); \mathcal{P}(\mathcal{F}_{\mathrm{inc}})) $$ is a dissipative measure-valued solution of \eqref{eq_incompressible_Euler} with initial data \( \bu_0 \) if the following conditions hold:
\begin{itemize}
    \item \textbf{Energy inequality} -- The integral inequality
    \begin{equation}
    \label{3.5}
    \frac{1}{2} \int_{\Omega} \langle \mathcal U_{\mathbf{x},t}; |\tilde{\bu}|^2 \rangle \, \dx + \int_{\Omega}  \mathrm{d} \mathcal{D}_{cd}(t) \leq \frac{1}{2} \int_{\Omega} |\bu_0|^2 \, \dx
    \end{equation}
    holds for almost every \( t \in (0, T) \) with \( D_{cd} \in L^\infty(0, T; \mathcal{M}^+(\Omega)) \).
    
     \item \textbf{Divergence-free condition} -- The integral identity
    \begin{equation}\label{eq_div_free}
    \int_{\Omega} \langle \mathcal{U}_{\mathbf{x},t}; \tilde{\bu} \rangle \cdot \nabla q\, \dx = 0
    \end{equation}
    holds for almost every \( t \in (0, T) \) and for all \( q\in C_c^\infty(\Omega) \).
    
    \item \textbf{Momentum equation} -- The integral identity
    \begin{equation}\label{moment}
    \begin{aligned}
   & \int_0^T \int_{\Omega} \left( \langle \mathcal{U}_{\mathbf{x},t}; \tilde{\bu} \rangle \cdot \partial_t \bv+ \langle \mathcal{U}_{\mathbf{x},t}; \tilde{\bu} \otimes \tilde{\bu} \rangle : \nabla \bv\right) \, \dx \, \dt \\
    &+ \int_0^T \int_{\Omega} \nabla \bv: \mathrm{d} \mathfrak{M}_{cd}(t) \, \dt + \int_{\Omega} \bu_0 \cdot \bv( \cdot, 0) \, \dx = 0
    \end{aligned}
    \end{equation}
    holds for all \( \bv \in C_c^\infty(\Omega \times  [0, T) ; \mathbb{R}^d) \) with \( \nabla \cdot \bv = 0 \), where \( \mathfrak{M}_{cd} \in L^\infty(0, T; \mathcal{M}(\Omega; \mathbb{R}^{d \times d})) \).
    
    \item \textbf{Defect compatibility condition} -- There exists \( c > 0 \) such that for almost every \( t \in (0, T) \)
    \begin{equation}
    \label{3.8}
    |\mathfrak{M}_{cd}(t)|(\Omega) \leq c \mathcal{D}(t),
    \end{equation}
where \( \mathcal{D}(t) = \int_{\Omega} \mathrm{d} \mathcal{D}_{cd}(t) > 0 \in L^\infty(0, T) \) denotes the dissipation defect.
\end{itemize}
\end{definition}
\begin{remark}
  A definition of DMV solutions was recently given in \cite{gwiazda2020dissipative} for general systems of conservation laws. In particular, it
  is valid for the compressible and incompressible Euler equations, as
  well as for the shallow water magnetohydrodynamics equations.
 Moreover, a weak-strong uniqueness result was obtained in \cite{gwiazda2020dissipative}. Focusing on this particular definition, we remark that any DMV solution to the incompressible Euler system is generated by a sequence of energy-admissible weak solutions $\{\bu_n \}_{n\in \N}$ of that system \cite{zbMATH06102064}. The dissipation defect and the defect in the momentum arise because the weak-$*$ limit of corresponding nonlinear terms does not coincide with the classical limit. We have
  \[
\mathcal{D}_{cd}(t) = \overline{\eta(\bu)}(t, \cdot) - \frac{1}{2} \langle \mathcal U_{\mathbf{x},t}; | \tilde{\bu} |^2 \rangle,\qquad
\mathfrak{M}_{cd} (t) = \overline{\bu \otimes \bu}(t, \cdot) - \langle \mathcal U_{(\mathbf{x},t)}; \tilde{\bu} \otimes \tilde{\bu} \rangle,
\]
where \( \overline{\eta(\bu)}\) is the weak-$*$ limit of \(\eta(\bu_n)=\frac{1}{2} |\bu_n|^2 \in L^\infty(0, T; \mathcal{M}^+(\Omega))\) for \(n \in \mathbb{N}\) and \(\bu \otimes \bu\) is the \mbox{weak-$*$} limit of \(\{\bu_n \otimes \bu_n\}_{n \in \mathbb{N}}\) in \(L^\infty(0, T; \mathcal{M}(\Omega; \mathbb{R}^{d \times d}))\).
In this context, weak-strong uniqueness implies that the defect measures disappear for a classical solution to the incompressible Euler system,  and the probability measure becomes a $\delta$-distribution, as demonstrated in \cite{wiedemann2017weak}.
\end{remark}
Following our analysis of flux-corrected finite element discretizations of the compressible Euler equations~\cite{preprint}, we now adapt the consistency result established in Theorem~\ref{th_Lax} as follows.

\begin{theorem}[Consistency in the vanishing viscosity limit]\label{th_consistency}
Assume that Lemmas \ref{lemma1}--\ref{lemma5} are applicable on the time interval $[0,T]$ and let the initial data $\bu_{h,0}$ be admissible.
Denote by $\mathbf{U}_h=(\bu_h, p_h)$ a uniformly bounded numerical solution to system \eqref{eq_incompressible_Euler} obtained with the scheme \eqref{weakform-stab}. Define the consistency errors~$e_{q_h}(q,t)$ and
$e_{\mathbf{u}_h} (\bv,t)$  w.r.t. test functions $q$ and $\bv$, respectively, by the property that
\begin{itemize} 
\item For any $q \in C_c^{1}(\Omega)$ and a.e. $t\in (0,T)$
\begin{equation}\label{divergence_free}
\int_{\Omega} \bu_{h} \cdot \nabla q \dx = e_{q_h}(q,t).
\end{equation}
\item For any $ \bv\in C^{2}_c( [0,T) \times \Omega;\R^d)$ with $\nabla \cdot \bv=0$ and any $\tau\in(0,T]$
  \begin{equation}\label{eq:consistency_m}
  \begin{aligned}
 \left[  \int_{\Omega} \bu_h\cdot  \bv \dx \right]_{t=0}^{t=\tau} =&\int_0^\tau \int_{\Omega}\left[ \bu_h\cdot \pd{\bv}{t}+(\bu_h \otimes \bu_h) : \nabla\bv\right] \dx \dt + \int_0^\tau e_{\mathbf{u}_h} (\bv,t) \dt.
 \end{aligned}
  \end{equation}
\end{itemize}
Then the semi-discrete scheme \eqref{weakform-stab} is consistent with the weak form of the incompressible Euler equations \eqref{eq_incompressible_Euler} in the sense that $\|e_{q_h}\|_{L^1(0,T)}\to 0$ and $\|e_{\bu_h}\|_{L^1(0,T)}\to 0$ as
$h\to 0$.
\end{theorem}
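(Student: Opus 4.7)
The plan is to adapt the Lax--Wendroff strategy of Theorem~\ref{th_Lax}, testing the stabilized weak forms \eqref{weakform-staba}--\eqref{weakform-stabb} with piecewise-linear interpolants $\bv_h=I_h\bv$ and $q_h=I_hq$ of the admissible test functions, and then to read off the consistency errors $e_{q_h}(q,t)$ and $e_{\bu_h}(\bv,t)$ from the resulting residual terms. These residuals fall into three groups: the algebraic stabilizations $s_h^u$ and $s_h^p$, controlled by Lemmas~\ref{lemma3}--\ref{lemma4}; standard $\mathbb{P}_1$ interpolation errors for $\bv-I_h\bv$ and $q-I_hq$; and the mass-lumping defect between $(\cdot,\cdot)_{L^2(\Omega)}$ and $(\cdot,\cdot)_{L^2_h(\Omega)}$. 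The uniform $L^\infty(0,T;L^2(\Omega))$ bounds on $\bu_h$ and $p_h$ needed to apply these estimates are inherited from the assumed uniform boundedness of $\mathbf{U}_h$ and from the semi-discrete energy identity used in Lemma~\ref{lemma1}.

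For the continuity equation I substitute $q_h=I_hq$ into \eqref{weakform-stabb}, apply Lemma~\ref{lemma5} to write $b(\bu_h,I_hq)=\int_\Omega \bu_h\cdot\nabla I_hq\dx$, and split $\nabla q=\nabla I_hq+\nabla(q-I_hq)$. This yields the pointwise identity $e_{q_h}(q,t)=s_h^p(p_h,I_hq)+\int_\Omega \bu_h\cdot\nabla(q-I_hq)\dx$. The first summand is of order $h$ uniformly in $t$ by Lemma~\ref{lemma3}; the second is bounded by $C\norm{\bu_h}_{L^2(\Omega)} h\norm{q}_{H^2(\Omega)}$ via the standard estimate $|q-I_hq|_{H^1(\Omega)}\le Ch\norm{q}_{H^2(\Omega)}$. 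Integration over $[0,T]$ gives $\norm{e_{q_h}}_{L^1(0,T)}=\mathcal{O}(h)\to 0$.

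For the momentum balance in the Euler limit ($\nu=0$), I take $\bv_h(\cdot,t)=I_h\bv(\cdot,t)$ in \eqref{weakform-staba} and integrate from $0$ to $\tau$. Discrete integration by parts in time on the mass-lumped inertial term produces $\bigl[(\bu_h,I_h\bv)_{L^2_h(\Omega)}\bigr]_0^\tau-\int_0^\tau (\bu_h,\partial_t I_h\bv)_{L^2_h(\Omega)}\dt$, which I compare to the continuous analogue using the mass-lumping estimate from Lemma~\ref{lemma3} and the interpolation estimate for $\bv-I_h\bv$, exactly as in \cite[Thm.~3]{preprint}. By Lemma~\ref{lemma5}, the Galerkin convective contribution coincides with $-\int_\Omega(\bu_h\otimes\bu_h):\nabla I_h\bv\dx$ up to a vanishing residual, and replacing $\nabla I_h\bv$ by $\nabla\bv$ adds one further interpolation error of order $h$. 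The stabilization $s_h^u(\bu_h,I_h\bv)$ has $L^1(0,T)$-norm of order $h^{1/2}$ by Lemma~\ref{lemma4} combined, via Cauchy--Schwarz in time, with the weak BV bound on $h^{1/2}|\bu_h|_{H^1(\Omega)}$ provided by Lemmas~\ref{lemma1}--\ref{lemma2}. The pressure term $b(I_h\bv,p_h)=-\int_\Omega p_h\,\nabla\cdot I_h\bv\dx$ is rewritten, using $\nabla\cdot\bv=0$, as $-\int_\Omega p_h\,\nabla\cdot(I_h\bv-\bv)\dx$ and estimated by $Ch\norm{p_h}_{L^2(\Omega)}\norm{\bv}_{H^2(\Omega)}$. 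Collecting every contribution and regrouping to match \eqref{eq:consistency_m} yields $\norm{e_{\bu_h}}_{L^1(0,T)}=\mathcal{O}(h^{1/2})\to 0$.

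The principal obstacle I anticipate is the pressure term: the interpolant $I_h\bv$ of a solenoidal field is not itself discretely divergence free, so $p_h$ cannot be eliminated by a clever choice of test function as it would be for inf-sup-stable mixed FEM. The super-approximation bound $\norm{\nabla\cdot(I_h\bv-\bv)}_{L^2(\Omega)}\le Ch\norm{\bv}_{H^2(\Omega)}$ is what rescues us here, and is also the reason the theorem imposes $\bv\in C^2_c$. Beyond this, the argument is essentially bookkeeping of mass-lumping and interpolation errors, entirely parallel to \cite[Thm.~3]{preprint}.
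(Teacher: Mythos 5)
Your overall architecture tracks the paper's proof closely, and most of the bookkeeping is right. The divergence part is essentially the paper's argument (split $q$ into $I_hq$ plus an interpolation remainder, invoke \eqref{weakform-stabb} and Lemma \ref{lemma3}), and in the momentum balance your treatment of the convective term (Lemmas \ref{lemma4}--\ref{lemma5}), of $s_h^u$ via the weak BV bound of Lemmas \ref{lemma1}--\ref{lemma2}, and of the pressure term --- reducing $b(I_h\bv,p_h)$ via $\nabla\cdot\bv=0$ to the divergence defect of the interpolant, bounded by $Ch\|p_h\|_{L^2(\Omega)}\|\bv\|_{H^2(\Omega)}$ --- coincides with what the paper does. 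You also correctly identify this pressure term as the point where equal-order interpolation bites.

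The genuine gap is your handling of the inertial term. The paper keeps $\pd{\bu_h}{t}$ and splits $\int_\Omega\pd{\bu_h}{t}\cdot\bv\dx$ into $\int_\Omega\pd{\bu_h}{t}\cdot(\bv-\bv_h)\dx$ plus the discrete pairing; the first piece is controlled by an ingredient you never establish, namely that the uniform boundedness of the nodal states together with the semi-discrete energy inequality \eqref{claimTh1} and the Lipschitz continuity of the numerical flux $\mathcal Q$ imply a uniform bound on $\pd{\bu_h}{t}$. Your substitute --- discrete integration by parts in time --- does not actually avoid this: it produces the endpoint mismatch $B(\tau)-B(0)$ with $B(t)=(\bu_h,I_h\bv)_{L^2_h(\Omega)}-(\bu_h,\bv)_{L^2(\Omega)}$, which is $\mathcal O(h)$ pointwise in $\tau$ but is \emph{not} of the form $\int_0^\tau e_{\bu_h}(\bv,t)\dt$ with a $\tau$-independent integrand. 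Since \eqref{eq:consistency_m} must hold for every $\tau\in(0,T]$, differentiation in $\tau$ forces $e_{\bu_h}(\bv,\tau)$ to be the pointwise residual, so bounding $\|e_{\bu_h}\|_{L^1(0,T)}$ requires controlling $B'(t)$, i.e., precisely the terms $\int_\Omega\pd{\bu_h}{t}\cdot(\bv-I_h\bv)\dx$ and the lumping defect of $(\pd{\bu_h}{t},I_h\bv)$ that your integration by parts was meant to eliminate. These cannot be declared small for free: crudely, $m_i\,\mathrm d\bu_i/\mathrm dt$ balances sums of $\mathcal O(h^{d-1})$ fluxes against $m_i=\mathcal O(h^d)$, so boundedness of the discrete accelerations is an additional fact, not a consequence of the uniform bounds on $\bu_h$ alone. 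To close your argument, either supply the acceleration bound as the paper does, or weaken the statement to an identity with a uniform-in-$\tau$ $\mathcal O(h)$ remainder rather than an $L^1$-small integrand.
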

\begin{proof}
  To derive \eqref{divergence_free} following the proof of
  Lemma \ref{lemma5}, we use the splitting $q=q_h+(q-q_h)$,
  \mbox{where $q_h= I_h q$} is the interpolant of the smooth test function $q$.
  Using the definition \eqref{divergence_free} of the consistency error
  $e_{q_h}(q,t)$ and integration by parts under the assumption of periodicity,
  we find that
\begin{align*}\label{divergence_free}
 e_{q_h}(q,t)=-
  \int_{\Omega} q  \nabla \cdot \bu_{h}  \dx =
 \int_{\Omega}  \underbrace{\left(  q_h- q \right)}_{\mathcal{O}(h^2)}  (\nabla \cdot \bu_{h}  ) \dx
 -  \int_{\Omega}    q_h     (\nabla \cdot \bu_{h}) \dx
\stackrel{\eqref{weakform-stabb}}{=}
 \mathcal{O}(h^2)+s_h^p(p_h, q_h).
 \end{align*}
The application of Lemma \ref{lemma3} proves that
$e_{q_h}(q,t)$ vanishes in the limit $h\to 0$.
\smallskip

To prove the consistency of
\eqref{eq:consistency_m} with the momentum equation, we first notice that the identity
\begin{equation*}
 \left[  \int_{\Omega} \bu_h\cdot  \bv \dx \right]_{t=0}^{t=\tau} = \int_0^\tau \int_{\Omega} \frac{\partial}{\partial t} (\bu_h \cdot  \bv) \dx \dt=
  \int_0^\tau \int_{\Omega} \left[ \bu_h\cdot \pd{\bv}{t} +\pd{\bu_h}{t} \cdot \bv\right] \dx \dt
\end{equation*}
holds for all $ \bv\in C^{2}_c(\Omega\times[0,T);\R^d) $. Using the interpolation error estimate for the difference between the smooth divergence-free test function $\bv $ and its interpolant $\bv_h=I_h\bv$, we find that
\begin{equation}\label{remark_equation}
 \int_{\Omega}  \pd{\bu_h}{t}\cdot\bv
  \dx =   \int_{\Omega} \pd{\bu_h}{t}\cdot
\underbrace{(\bv- \bv_h)}_{ \mathcal{O}(h^2)} \dx  +
   \int_{\Omega}  \pd{\bu_h}{t}\cdot \bv_h  \dx.
\end{equation}
The nodal states $\bu_i(t) = \bu_h(\bx_i, t)$ are uniformly bounded by assumption and satisfy the semi-discrete energy inequality \eqref{claimTh1}. The Lipschitz continuity of the numerical flux function $\mathcal Q$ implies that the time derivative of $\bu_h$ is uniformly bounded as well. Therefore, the first term on the right-hand side of \eqref {remark_equation} vanishes as $h \to 0$.
To estimate the second term, we follow the proof of
Lax-Wendroff consistency in Theorem \ref{th_Lax}. Invoking \eqref{weakform-staba} and using the assumption that $\nabla\cdot\bv=0$, we arrive at
\begin{align*}
\int_{\Omega}  \pd{\bu_h}{t} \cdot\bv
\dx ={}&
\int_{\Omega} (\bu_h \otimes \bu_h): \nabla \bv \dx + e_{\mathbf{u}_h} (\bv,t),
\end{align*}
where
\begin{align*}
  e_{\mathbf{u}_h} (\bv,t) ={}&\int_{\Omega} \pd{\bu_h}{t}\cdot
(\bv- \bv_h) \dx +
  \left(\pd{\bu_h}{t},\bv_h\right)_{L^2(\Omega)}-\left(\pd{\bu_h}{t},\bv_h\right)_{L_h^2(\Omega)}\\
    {}&- \int_{\Omega} (\bu_h \otimes \bu_h): \nabla \bv_h \dx -a(\bu_h, \bv_h) +  s_h^u(\bu_h, \bv_h)\\
    {}&+b(\bv-\bv_h, p_h)-
    \int_{\Omega} (\bu_h \otimes \bu_h): \nabla (\bv-\bv_h) \dx.
\end{align*}
The first term on the right-hand side of the formula for  $e_{\mathbf{u}_h} (\bv,t)$ has
already been estimated in \eqref{remark_equation}.~The remainder of the first line represents the consistency error due to mass lumping. It can again be estimated as in \cite[Thm. 3]{preprint}. The second line is the consistency error associated with the discretization of the nonlinear convective term.
By Lemmas \ref{lemma4} and \ref{lemma5}, this error tends to zero
as $h\to 0$. The term $b(\bv-\bv_h, p_h)$ measures the divergence error in the interpolant $\bv_h=I_h\bv$.
It can be estimated using the Cauchy--Schwarz inequality and the interpolation error estimate $\|\nabla\cdot(\bv-\bv_h)\|_{L^2(\Omega)}\le Ch|\bv|_{H^2(\Omega)}$. The second term in the third line can be estimated similarly.
Combining the above estimates for the individual components of $e_{\mathbf{u}_h} (\bv,t)$, we complete the proof of the theorem.
\end{proof}


Finally, the assumptions of
the following theorem guarantee weak convergence to a DMV solution.

\begin{theorem}[Weak convergence]\label{eq:theorem_weak} 
Assume that Lemmas \ref{lemma1}--\ref{lemma5} are applicable on a time interval $[0,T]$ and let the initial data $\bu_{h,0}$ be admissible.
Consider a family $\{\mathbf{U}_h\}_{h \searrow 0} \equiv \{ \bu_h, p_h \}_{h \searrow 0}$ of uniformly bounded numerical solutions obtained with the scheme \eqref{weakform-stab}. Then there exists a subsequence of $\mathbf{U}_h$ (denoted again by $\mathbf{U}_h$) such that
$$
\mathbf{u}_h \to  \langle \mathcal U_{(\mathbf{x},t)}; \tilde{\bu} \rangle  \text{ weakly}-* \text{ in } L^{\infty} ((0,T); L^2(\Omega; \R^d))
$$
as $h\to 0$. The limit $\mathcal{U}_{(\mathbf{x},t)}$ is a DMV solution of the incompressible Euler equations corresponding to the initial data $\bu_0$ and concentration defects
\begin{equation*}
\mathcal{D}_{cd}(t)= \overline{\eta(\bu)}(t)- \frac{1}{2}\langle \mathcal U_{(\mathbf x,t)}; |  \tilde{\bu}|^2 \rangle, \qquad 
\mathfrak{M}_{cd} (t)= \overline{\bu \otimes \bu} -  \langle \mathcal U_{(\mathbf x,t)}; \tilde{\bu} \otimes  \tilde{\bu} \rangle,
\end{equation*}
where \(\overline{\eta(\bu)}\) is the weak-$*$ limit of \(\eta(\bu_h)=\frac{1}{2} |\bu_h|^2 \) as $ h \to 0 $ in \(L^\infty(0, T; \mathcal{M}^+(\Omega))\) and \(\bu \otimes \bu\) is the weak-$*$ limit of \(\{\bu_h \otimes \bu_h\}_{h }\) in \(L^\infty(0, T; \mathcal{M}(\Omega; \mathbb{R}^{d \times d}))\).
\end{theorem}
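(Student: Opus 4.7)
The plan is to combine the fundamental theorem on Young measures with the consistency result of Theorem~\ref{th_consistency} and the local energy inequality of Theorem~1, in the spirit of \cite[Sec.~5]{preprint}. By assumption the family $\{\mathbf{U}_h\}_{h\searrow 0}$ is uniformly bounded, so $\bu_h$ is bounded in $L^\infty(0,T; L^\infty(\Omega;\R^d))$ and in particular in $L^\infty(0,T; L^2(\Omega;\R^d))$. Banach--Alaoglu then yields a subsequence that converges weakly-$*$ to some limit in $L^\infty(0,T; L^2(\Omega;\R^d))$. Applying the fundamental theorem on Young measures (e.g.\ as formulated in \cite{feireisl2020b,feireisl2021}) to $\{\bu_h\}$, we obtain a parametrized probability measure $\mathcal{U} = \{\mathcal{U}_{\mathbf{x},t}\}$ in $L^\infty_{\rm weak-*}(\Omega\times(0,T);\mathcal{P}(\mathcal{F}_{\rm inc}))$ such that $\bu_h \to \langle \mathcal{U}_{\mathbf{x},t};\tilde{\bu}\rangle$ weakly-$*$, and such that for any continuous $F$ with at most quadratic growth, $F(\bu_h)$ converges weakly-$*$ to some $\overline{F(\bu)}\in L^\infty(0,T;\mathcal{M}(\Omega))$ with $\overline{F(\bu)} - \langle\mathcal{U}_{\mathbf{x},t};F(\tilde{\bu})\rangle \ge 0$ whenever $F$ is convex. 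Applied with $F(\bu)=\eta(\bu)$ and with the matrix-valued $F(\bu)=\bu\otimes\bu$, this produces the concentration defects $\mathcal{D}_{cd}$ and $\mathfrak{M}_{cd}$ exactly as stated in the theorem.

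Next I would verify the three structural properties in Definition~\ref{def_DMV}. For the divergence-free condition, Theorem~\ref{th_consistency} gives $\|e_{q_h}\|_{L^1(0,T)}\to 0$ for every $q\in C_c^1(\Omega)$; passing to the weak-$*$ limit in \eqref{divergence_free} and using $\bu_h \rightharpoonup^* \langle\mathcal{U}_{\mathbf{x},t};\tilde{\bu}\rangle$ in the linear term $\int_\Omega \bu_h\cdot\nabla q\,\dx$ yields \eqref{eq_div_free} for a.e.\ $t$. For the momentum equation, I would take $\bv\in C_c^2([0,T)\times\Omega;\R^d)$ with $\nabla\cdot\bv=0$ and pass to the limit in \eqref{eq:consistency_m} with $\tau=T$ (noting $\bv(\cdot,T)=0$). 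The linear term $\int \bu_h\cdot\partial_t\bv\,\dx\,\dt$ converges to $\int\langle\mathcal{U};\tilde{\bu}\rangle\cdot\partial_t\bv\,\dx\,\dt$ by weak-$*$ convergence, the admissibility of the initial data gives $\int_\Omega \bu_h(\cdot,0)\cdot\bv(\cdot,0)\dx \to \int_\Omega \bu_0\cdot\bv(\cdot,0)\dx$, and the nonlinear term $\int(\bu_h\otimes\bu_h):\nabla\bv\,\dx\,\dt$ converges to $\int\langle\mathcal{U};\tilde{\bu}\otimes\tilde{\bu}\rangle:\nabla\bv\,\dx\,\dt + \int\nabla\bv:\mathrm{d}\mathfrak{M}_{cd}\,\dt$ by definition of the matrix-valued defect. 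Since $\|e_{\bu_h}\|_{L^1(0,T)}\to 0$, this produces \eqref{moment}. For the energy inequality \eqref{3.5}, I would sum the local inequality \eqref{claimTh1} against a nonnegative test function in $t$, use that $\sum_{j\in\Nis}2|\mathbf{c}_{ij}|\mathcal{Q}$ cancels after summation over $i$ in the periodic case (skew-symmetry of the numerical flux in $i\leftrightarrow j$), and pass to the limit, writing the left-hand side as $\int_\Omega \eta(\bu_h)\dx$ and recognizing its weak-$*$ limit as $\langle\mathcal{U};\tfrac12|\tilde{\bu}|^2\rangle + \mathrm{d}\mathcal{D}_{cd}$.

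The hardest step will be the defect compatibility condition \eqref{3.8}, i.e.\ $|\mathfrak{M}_{cd}|(\Omega) \le c\,\mathcal{D}(t)$. The key observation is that Jensen's inequality applied pointwise to the Young measure gives $\langle\mathcal{U};\tilde{\bu}\otimes\tilde{\bu}\rangle - \langle\mathcal{U};\tilde{\bu}\rangle\otimes\langle\mathcal{U};\tilde{\bu}\rangle$ positive-semidefinite, and, more importantly, that for any unit vector $\xi$ the map $\bu\mapsto(\bu\cdot\xi)^2$ is controlled by $|\bu|^2 = 2\eta(\bu)$. This yields $|\mathfrak{M}_{cd}|\le c\,\mathcal{D}_{cd}$ as measures, hence $|\mathfrak{M}_{cd}|(\Omega)\le c\,\mathcal{D}(t)$ after integration, with $c$ depending only on the dimension; this is the standard argument used in \cite{feireisl2020b,feireisl2021,gwiazda2020dissipative}. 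Finally, to select a single subsequence that realizes all weak-$*$ limits simultaneously and to identify the Young measure, I would invoke a diagonal extraction together with the separability of the relevant test-function spaces. Collecting the items verified above shows that $\mathcal{U}_{(\mathbf{x},t)}$ satisfies all four properties of Definition~\ref{def_DMV}, completing the proof.
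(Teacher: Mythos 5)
Your proposal is correct and follows essentially the same route as the paper's own proof, which is only a high-level sketch deferring to \cite{arun2024asymptoticpreservingfinitevolume,preprint}: uniform boundedness plus the Fundamental Theorem of Young measures to generate $\mathcal U$ and the defects, the consistency estimates of Theorem~\ref{th_consistency} to pass to the limit in \eqref{divergence_free} and \eqref{eq:consistency_m}, the summed local energy inequality \eqref{claimTh1} for \eqref{3.5}, and the standard trace bound (for unit $\xi$, $(\bu\cdot\xi)^2\le|\bu|^2=2\eta(\bu)$) for the compatibility condition \eqref{3.8}. You in fact supply more detail than the paper does; the only minor imprecision is the opening Jensen remark (since $\mathfrak{M}_{cd}$ is a concentration defect rather than the Young-measure variance), but you immediately pivot to the correct argument, so nothing is lost.
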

\begin{proof}
The result of Theorem~\ref{eq:theorem_weak} does not rely on any details of the discretization procedure. Instead, the proof technique that we use in this work exploits the inherent properties of our structure-preserving numerical scheme. In view of the consistency result that they imply, we can closely follow the convergence proof outlined in \cite{arun2024asymptoticpreservingfinitevolume}. Its central idea involves leveraging the uniform boundedness of various quantities, applying the Fundamental Theorem of Young measures, and using the consistency estimates provided by Theorem~\ref{th_consistency}.
Strong convergence as well as $\mathcal{K}$ convergence can then be shown similarly to the compressible/hyperbolic case. For technical details, we refer the interested reader to \cite{arun2024asymptoticpreservingfinitevolume, preprint}.
\end{proof}

\section{Numerical results}
\label{sec:examples}

We apply our locally energy-stable finite element method \eqref{galerkin-stab} with $d_{ij}^u=0$ to standard 2D test problems in this section.
Following Becker and Hansbo~\cite{becker}, the pressure stabilization parameter $\omega$ is set to $0.5$ in all experiments.
Our implementation invokes mesh generation techniques from the open-source packages FESTUNG~\cite{festung,reuter2021} and MFEM~\cite{anderson2021,mfem}.
In a postprocessing step, periodicity is enforced by manipulating the grid data as required.
This task is accomplished by identifying nodes (and also edges, depending on the implementation/discretization) that represent the same entity on periodic domains.
The numerical approximations are visualized with the open-source software GLVis~\cite{glvis}.
Initial conditions for the velocity vector are generated via consistent $L^2(\Omega)$ projections (by default) or lumped $L^2(\Omega)$ projections if the initial conditions are nonsmooth (Gresho vortex, Section \ref{sec:gresho}).

\subsection{Taylor--Green vortex}\label{sec:tg}

First, we assess the accuracy of the method under investigation by running the Taylor--Green vortex test in $\Omega=(0,1)^2$.
The smooth exact solution to this incompressible flow problem reads~\cite{schroeder2019}
\begin{align}\label{eq:tg}
\mathbf u(x,y,t) ={}& \begin{bmatrix}
\sin(2\pi x)\sin(2\pi y) \\ \cos(2\pi x)\cos(2\pi y)
\end{bmatrix}
\exp(-8\pi^2\nu t), \\
p(x,y,t) ={}& \frac12 [1 - \sin^2(2\pi x) - \cos^2(2\pi y)]\exp(-16\pi^2\nu t).\notag
\end{align}
Note that $\int_\Omega p(x,y,t)\mathrm dx \mathrm dy = 0$ for all $t\ge 0$.
Following~\cite[Sec.~4.3]{schroeder2019}, we set $\nu=10^{-5}$ and run simulations up to the end time $t=1$, at which we compute the $L^2(\Omega)$ errors $e_\mathbf{u}^M$, $e_p^M$ in the velocity $\mathbf u$ and pressure $p$.
The superscript $M\in \{C,L\}$ indicates whether calculations are performed with the consistent or lumped mass matrix.
Using a hierarchy of uniform Friedrichs--Keller triangulations, unstructured Delaunay meshes and uniform quadrilateral grids, we calculate the experimental orders of convergence~(EOC).
The ratio $\Delta t/h$, where $h=\max_e$diam$(K^e)$, is kept fixed in each case with values of $\Delta t/h=\sqrt2/4\approx0.3536$ for both types of structured grids and $\Delta t/h=5/16 = 0.3125$ for computations on unstructured meshes.
These choices amount to a total of 512 time steps on the finest mesh of each type.
The results of our grid convergence study are presented in Tables~\ref{tab:tgv1}--\ref{tab:tgv3}.

\begin{table}[ht!]
\centering
\begin{tabular}{c||cc|cc||cc|cc}
$\sqrt2/h$ & $e_\mathbf{u}^C$ & EOC & $e_p^C$ & EOC & $e_\mathbf{u}^L$ & EOC & $e_p^L$ & EOC \\
\hline
16  & 8.01E-02 &      & 6.59E-03 &      & 6.75E-02 &      & 5.74E-03 &      \\
32  & 1.90E-02 & 2.08 & 1.37E-03 & 2.26 & 1.82E-02 & 1.89 & 1.35E-03 & 2.09 \\
64  & 4.77E-03 & 2.00 & 3.55E-04 & 1.95 & 4.71E-03 & 1.95 & 3.53E-04 & 1.93 \\
128 & 1.20E-03 & 1.99 & 9.03E-05 & 1.98 & 1.20E-03 & 1.98 & 9.03E-05 & 1.97 \\
256 & 3.02E-04 & 1.99 & 2.28E-05 & 1.99 & 3.02E-04 & 1.99 & 2.28E-05 & 1.98 \\
\hline
average && 2.02 && 2.05 && 1.95 && 1.99
\end{tabular}
\caption{L$^2(\Omega)$ convergence for the Taylor--Green vortex on Friedrichs--Keller triangulations.}\label{tab:tgv1}
\end{table}

\begin{table}[ht!]
\centering
\begin{tabular}{c||cc|cc||cc|cc}
$1/h$ & $e_\mathbf{u}^C$ & EOC & $e_p^C$ & EOC & $e_\mathbf{u}^L$ & EOC & $e_p^L$ & EOC \\
\hline
10  & 2.50E-01 &      & 1.20E-01 &      & 6.24E-02 &      & 2.98E-02 &      \\
20  & 7.58E-02 & 1.72 & 2.93E-02 & 2.03 & 2.16E-02 & 1.53 & 9.07E-03 & 1.71 \\
40  & 2.32E-02 & 1.71 & 8.82E-03 & 1.73 & 7.40E-03 & 1.55 & 3.15E-03 & 1.53 \\
80  & 6.92E-03 & 1.75 & 2.09E-03 & 2.07 & 2.50E-03 & 1.56 & 9.93E-04 & 1.66 \\
160 & 1.18E-03 & 2.55 & 4.68E-04 & 2.16 & 8.13E-04 & 1.62 & 3.06E-04 & 1.70 \\
\hline
average && 1.93 && 2.00 && 1.57 && 1.65
\end{tabular}
\caption{L$^2(\Omega)$ convergence for the Taylor--Green vortex on unstructured Delaunay meshes.}
\end{table}

\begin{table}[ht!]
\centering
\begin{tabular}{c||cc|cc||cc|cc}
$\sqrt2/h$ & $e_\mathbf{u}^C$ & EOC & $e_p^C$ & EOC & $e_\mathbf{u}^L$ & EOC & $e_p^L$ & EOC \\
\hline
16  & 8.83E-03 &      & 4.59E-03 &      & 7.38E-03 &      & 4.22E-03 &      \\
32  & 2.11E-03 & 2.07 & 1.01E-03 & 2.19 & 2.00E-03 & 1.88 & 9.77E-04 & 2.11 \\
64  & 5.23E-04 & 2.01 & 2.44E-04 & 2.05 & 5.16E-04 & 1.96 & 2.42E-04 & 2.01 \\
128 & 1.31E-04 & 2.00 & 6.07E-05 & 2.00 & 1.30E-04 & 1.99 & 6.07E-05 & 2.00 \\
256 & 3.27E-05 & 2.00 & 1.53E-05 & 1.99 & 3.26E-05 & 2.00 & 1.53E-05 & 1.99 \\
\hline
average && 2.02 && 2.08 && 1.96 && 2.03
\end{tabular}
\caption{L$^2(\Omega)$ convergence for the Taylor--Green vortex on uniform quadrilateral grids.}\label{tab:tgv3}
\end{table}

We observe that both the lumped and the consistent mass matrix versions of our scheme converge with optimal orders of accuracy in every variable.
To further quantify the dissipative properties of the new discretizations, we track the temporal evolutions of kinetic energies and compare them with the exact energy.
According to \eqref{eq:tg}, the latter decays exponentially in time at the rate of $-16\pi^2\nu$.
Since the kinetic energies of discrete initial data are generally mesh dependent, we normalize the data sets by their respective initial values.
The results are presented in Fig.~\ref{fig:tg-ene}.
Note that the kinetic energies of lumped and consistent mass matrix approximations are computed differently, namely as follows:
\begin{align*}
\eta_L(\mathbf u_h) = \frac12 (\mathbf u_h,\mathbf u_h)_{L^2(\Omega)},\qquad
\eta_C(\mathbf u_h) = \frac12 (\mathbf u_h,\mathbf u_h)_{L_h^2(\Omega)}.
\end{align*}

\begin{figure}[ht!]
\centering
\begin{subfigure}[b]{0.32\textwidth}
\caption{Friedrichs--Keller}
\includegraphics[width=\textwidth,trim=75 0 50 0,clip]{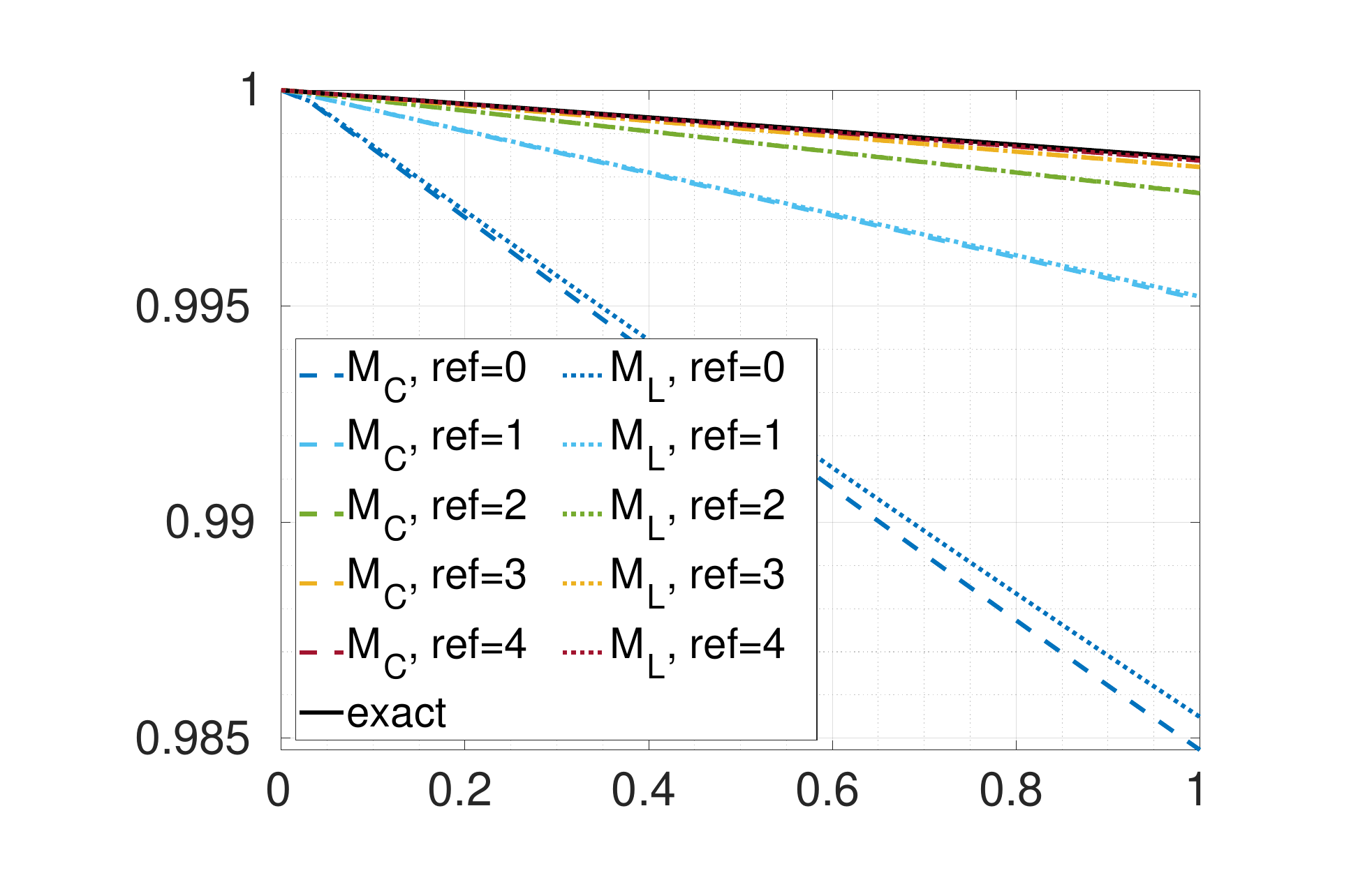}
\end{subfigure}
\begin{subfigure}[b]{0.32\textwidth}
\caption{Unstructured Delaunay}
\includegraphics[width=\textwidth,trim=75 0 50 0,clip]{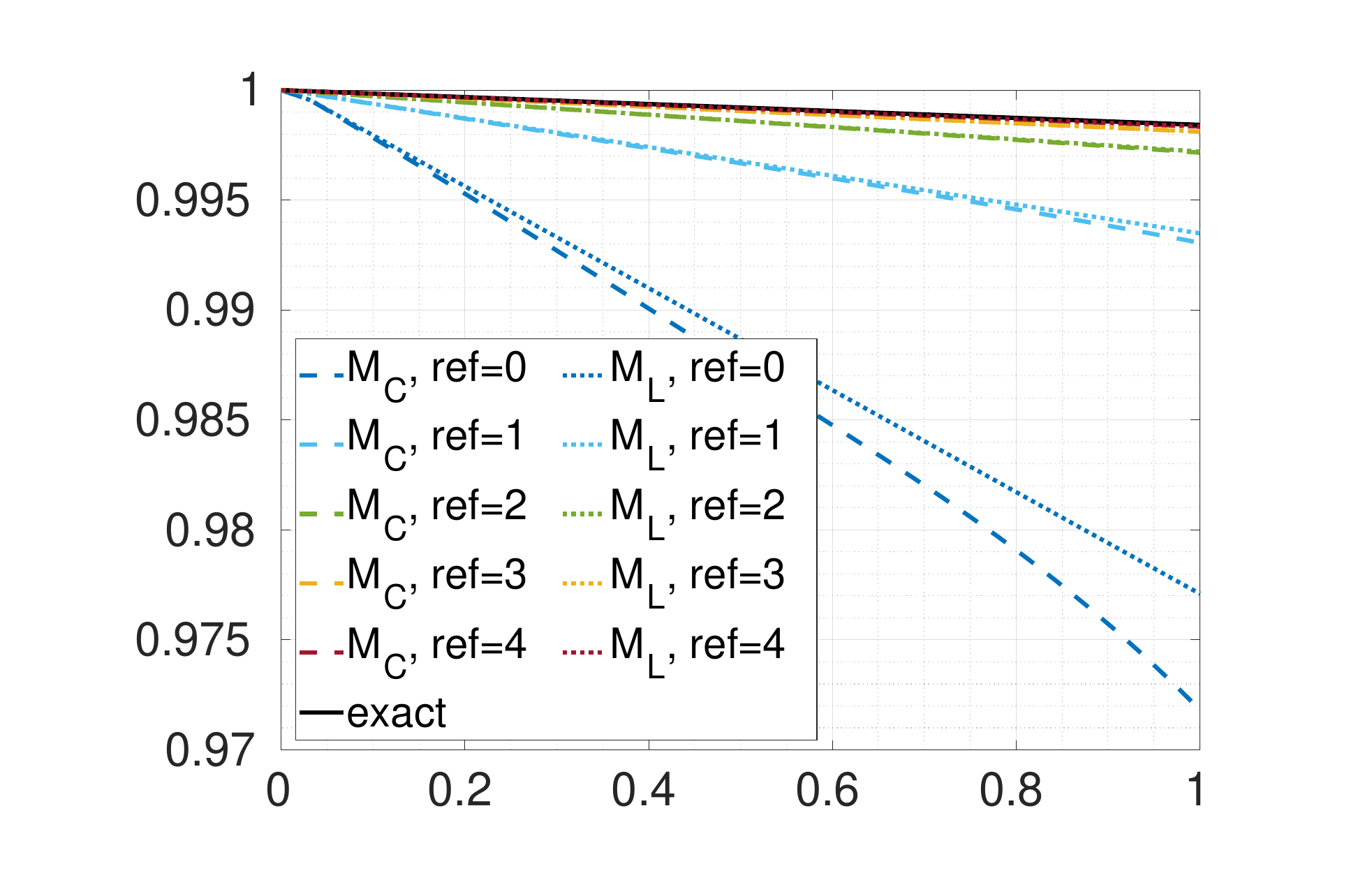}
\end{subfigure}
\begin{subfigure}[b]{0.32\textwidth}
\caption{Uniform quadrilateral}
\includegraphics[width=\textwidth,trim=75 0 50 0,clip]{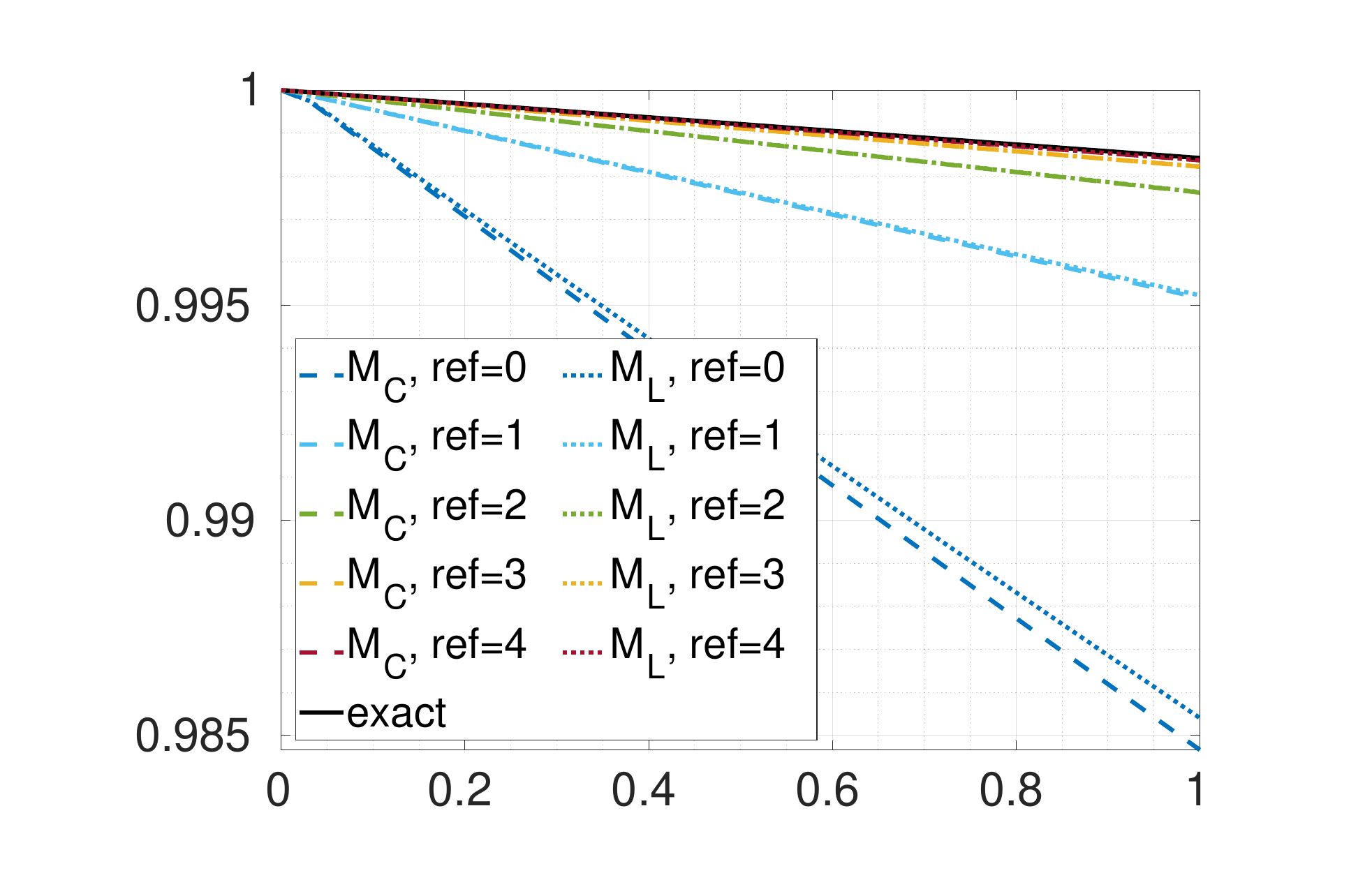}
\end{subfigure}
\caption{Energy evolutions for the Taylor--Green vortex, exact profile and numerical results obtained on refinement levels \texttt{ref} for three types of meshes.}\label{fig:tg-ene}
\end{figure}

Generally, the lumped-mass solutions seem to be less diffusive than their consistent-mass counterparts.
This observation is illustrated by the error behavior in Tables~\ref{tab:tgv1}--\ref{tab:tgv3}.
Upon spatial and temporal refinement, each profile converges to that for the exact energy evolution.
While the analysis presented in Section \ref{sec:strong} guarantees that the total energy is nonincreasing for the semi-discrete scheme and exact time integration, our numerical results indicate that the global energy stability property carries over to the fully discrete setting.
Indeed, all profiles obtained in this study are monotonically decreasing.

Next, we study the long-time solution behavior by running the simulations up to an end time that is 50 times larger than before.
Again, we use lumped and consistent mass matrices, as well as the three types of meshes.
In Figs.~\ref{fig:tg-v}--\ref{fig:tg-p}, we present snapshots of the velocity magnitude and pressure obtained with the lumped-mass version of our scheme on the finest level of a structured quadrilateral grid.
The numerical solutions obtained with any of the other five computational setups are qualitatively similar to the displayed approximations.
Kinetic energy profiles for all runs are shown in Fig.~\ref{fig:tg-long-ene}.

\begin{figure}[ht!]
\centering
\begin{subfigure}[b]{0.24\textwidth}
\caption{$t=1,|\mathbf u|\le 1.00$}
\includegraphics[width=\textwidth]{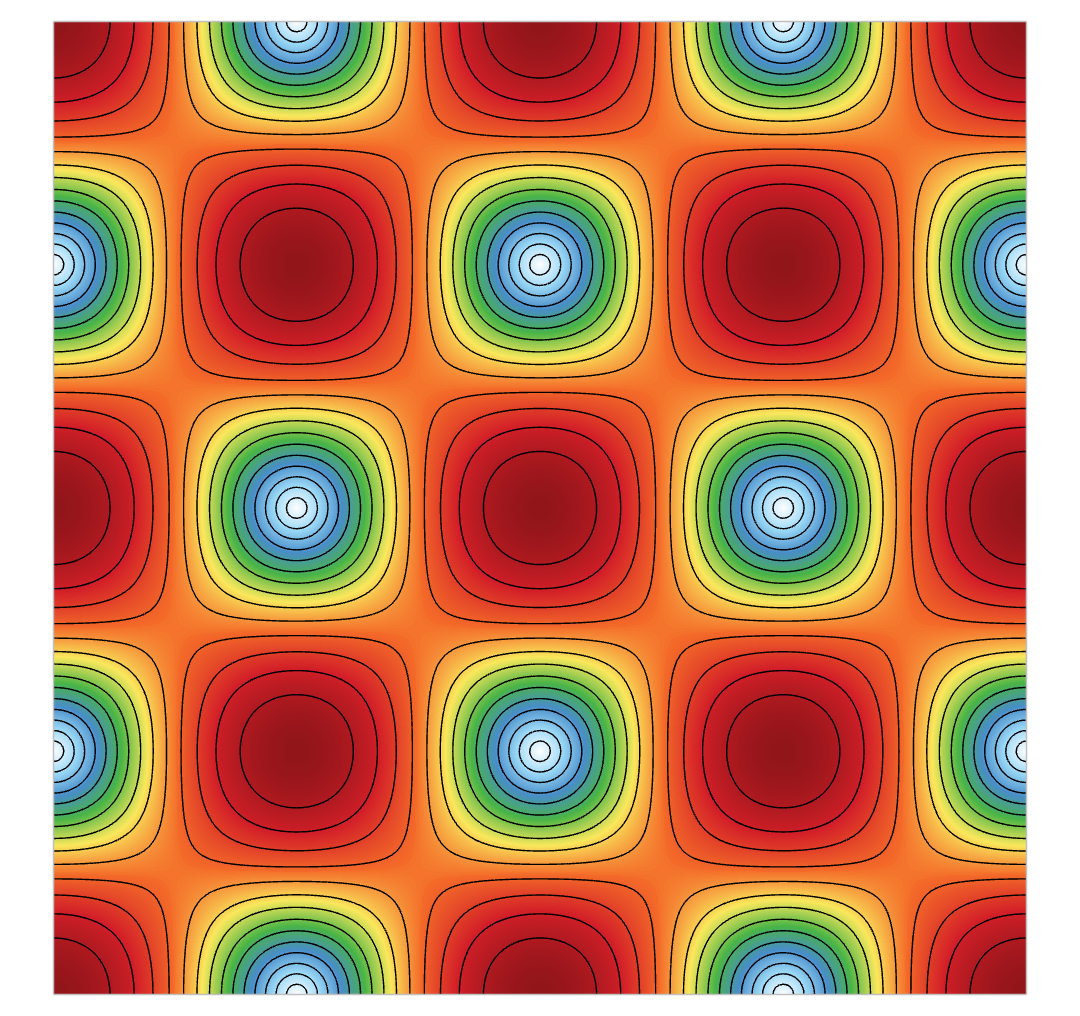}
\end{subfigure}
\begin{subfigure}[b]{0.24\textwidth}
\caption{$t=35,|\mathbf u|\le 0.973$}
\includegraphics[width=\textwidth]{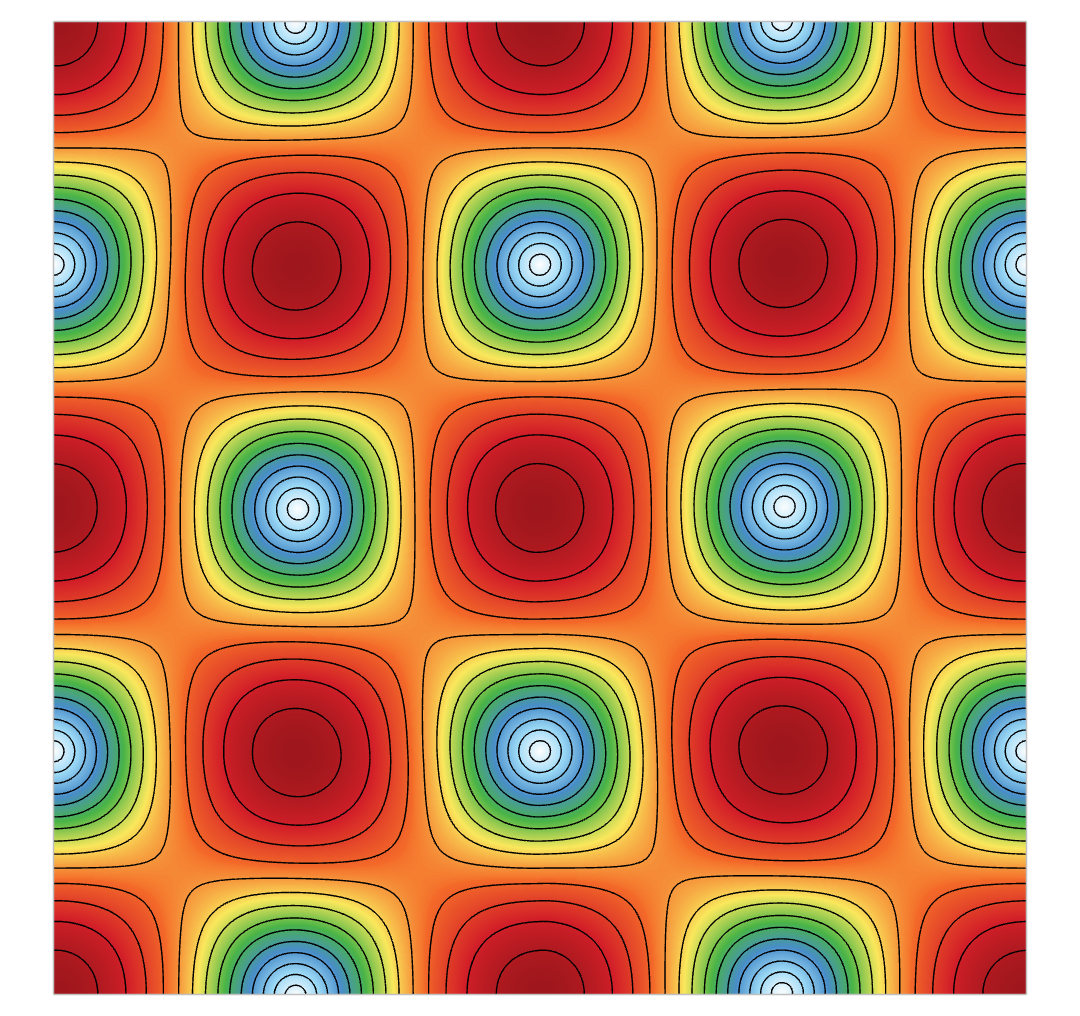}
\end{subfigure}
\begin{subfigure}[b]{0.24\textwidth}
\caption{$t=40,|\mathbf u|\le 0.987$}
\includegraphics[width=\textwidth]{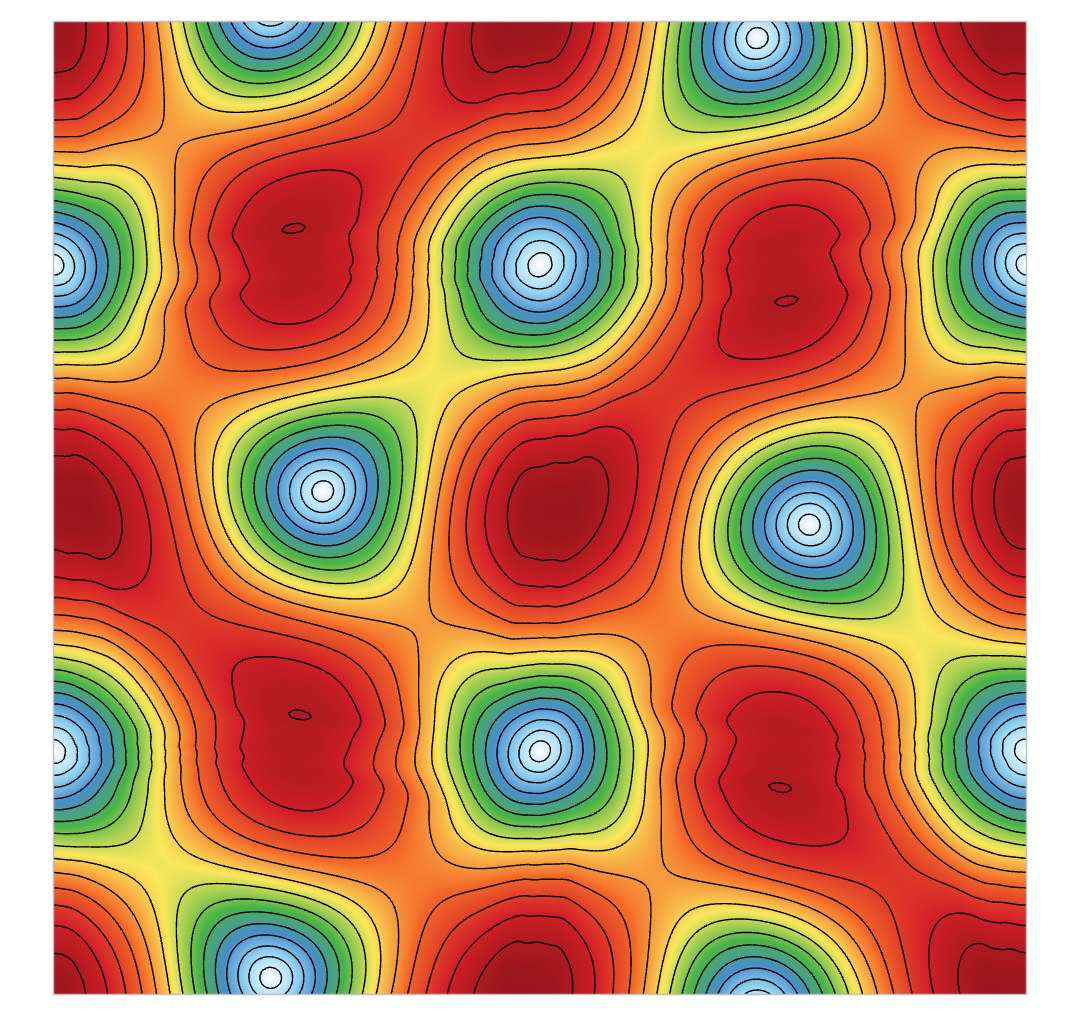}
\end{subfigure}
\begin{subfigure}[b]{0.24\textwidth}
\caption{$t=41,|\mathbf u|\le 0.988$}
\includegraphics[width=\textwidth]{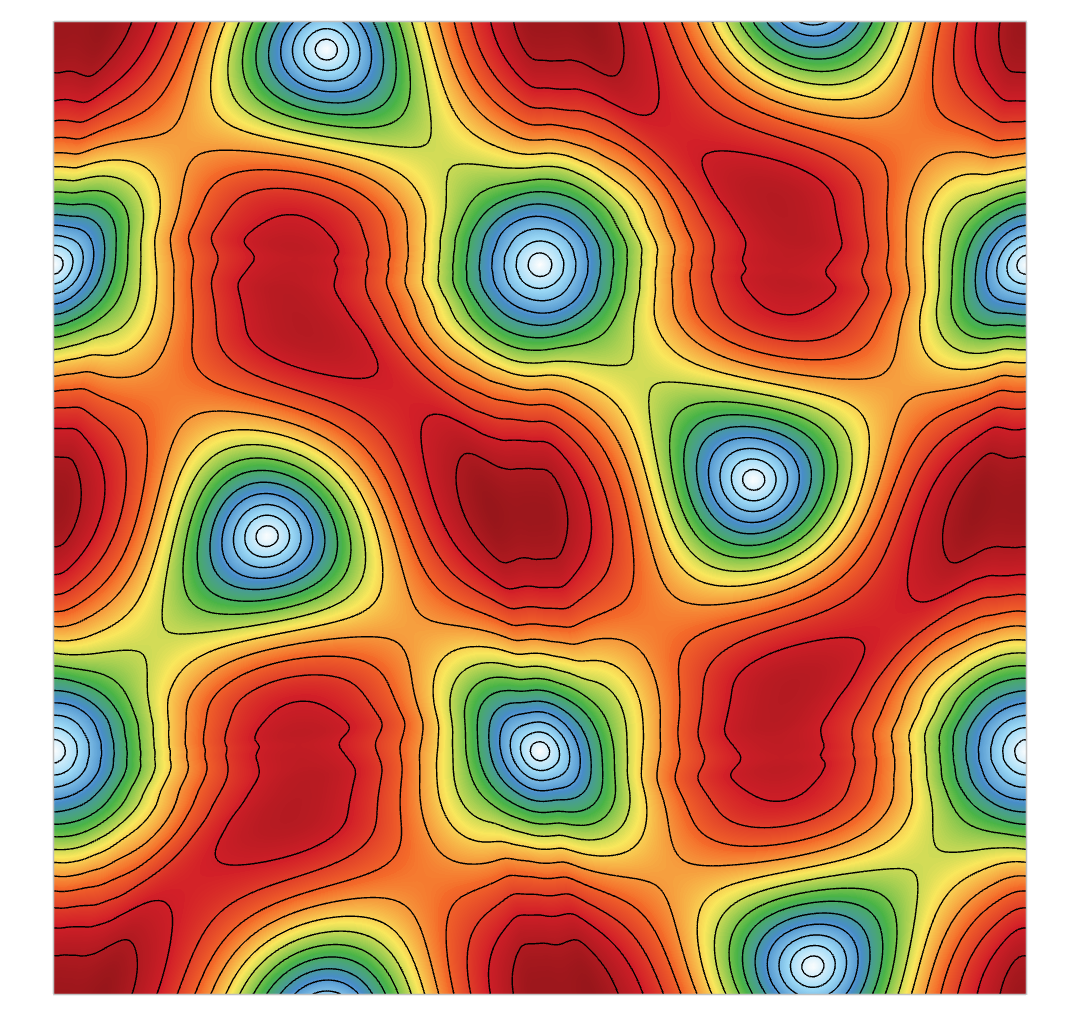}
\end{subfigure}
\begin{subfigure}[b]{0.24\textwidth}
\caption{$t=42|\mathbf u|\le 0.970$}
\includegraphics[width=\textwidth]{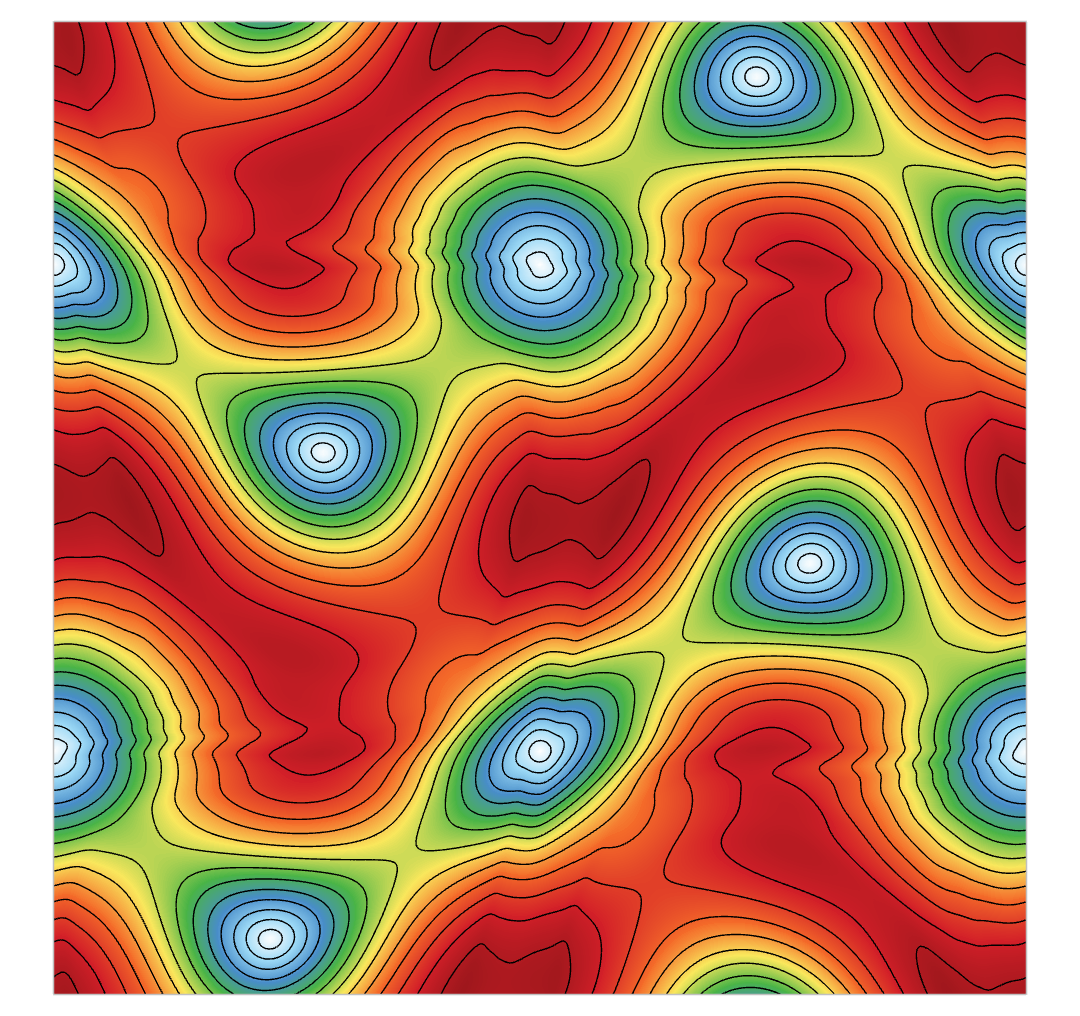}
\end{subfigure}
\begin{subfigure}[b]{0.24\textwidth}
\caption{$t=43,|\mathbf u|\le 1.11$}
\includegraphics[width=\textwidth]{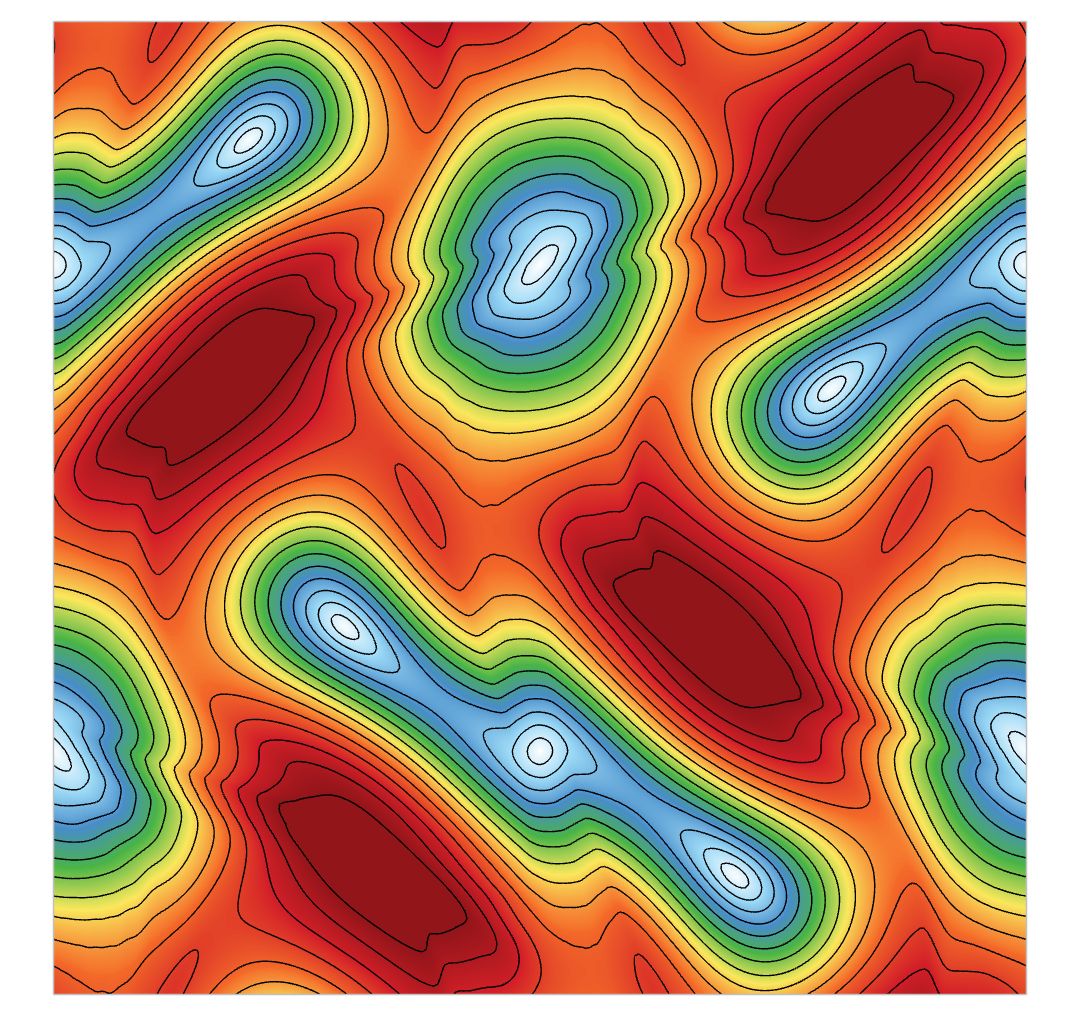}
\end{subfigure}
\begin{subfigure}[b]{0.24\textwidth}
\caption{$t=44,|\mathbf u|\le 1.06$}
\includegraphics[width=\textwidth]{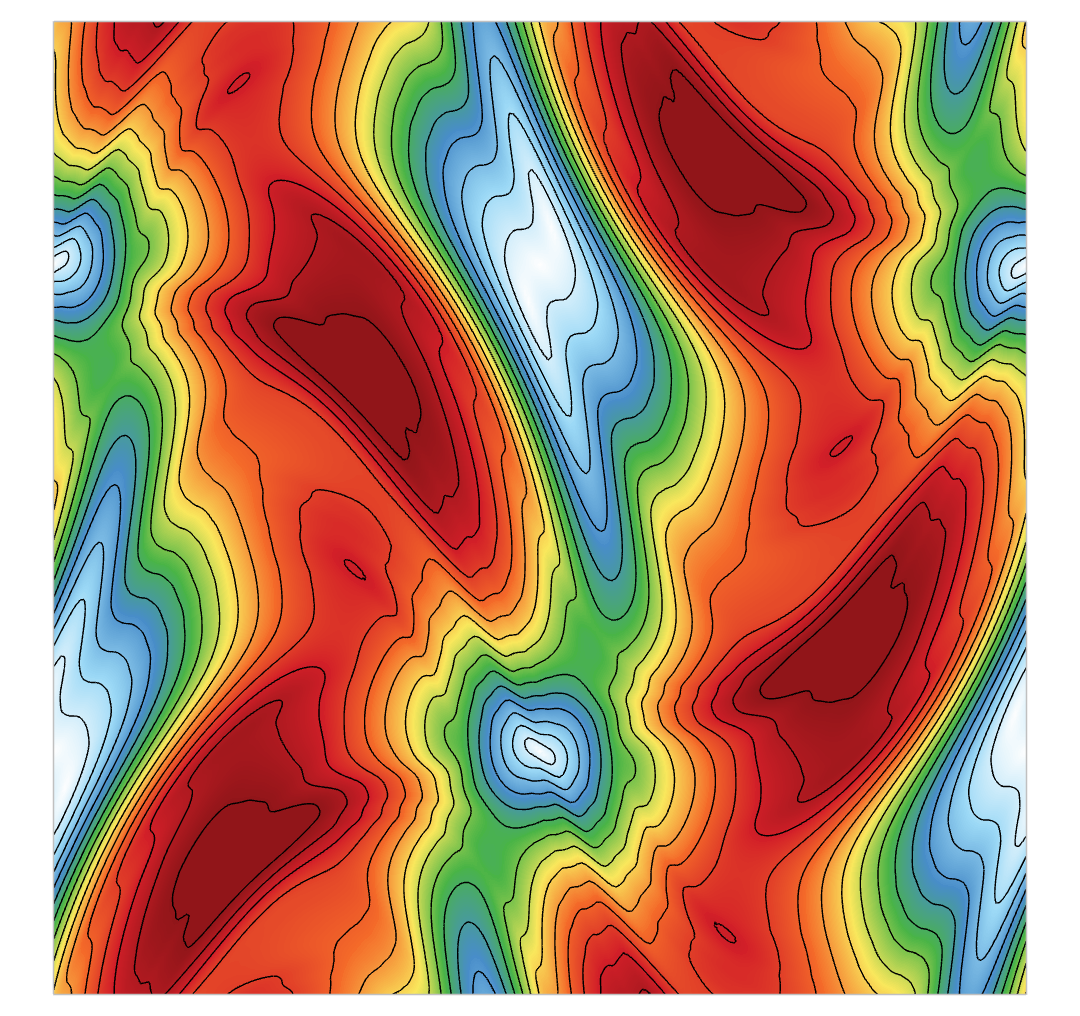}
\end{subfigure}
\begin{subfigure}[b]{0.24\textwidth}
\caption{$t=45,|\mathbf u|\le 1.17$}
\includegraphics[width=\textwidth]{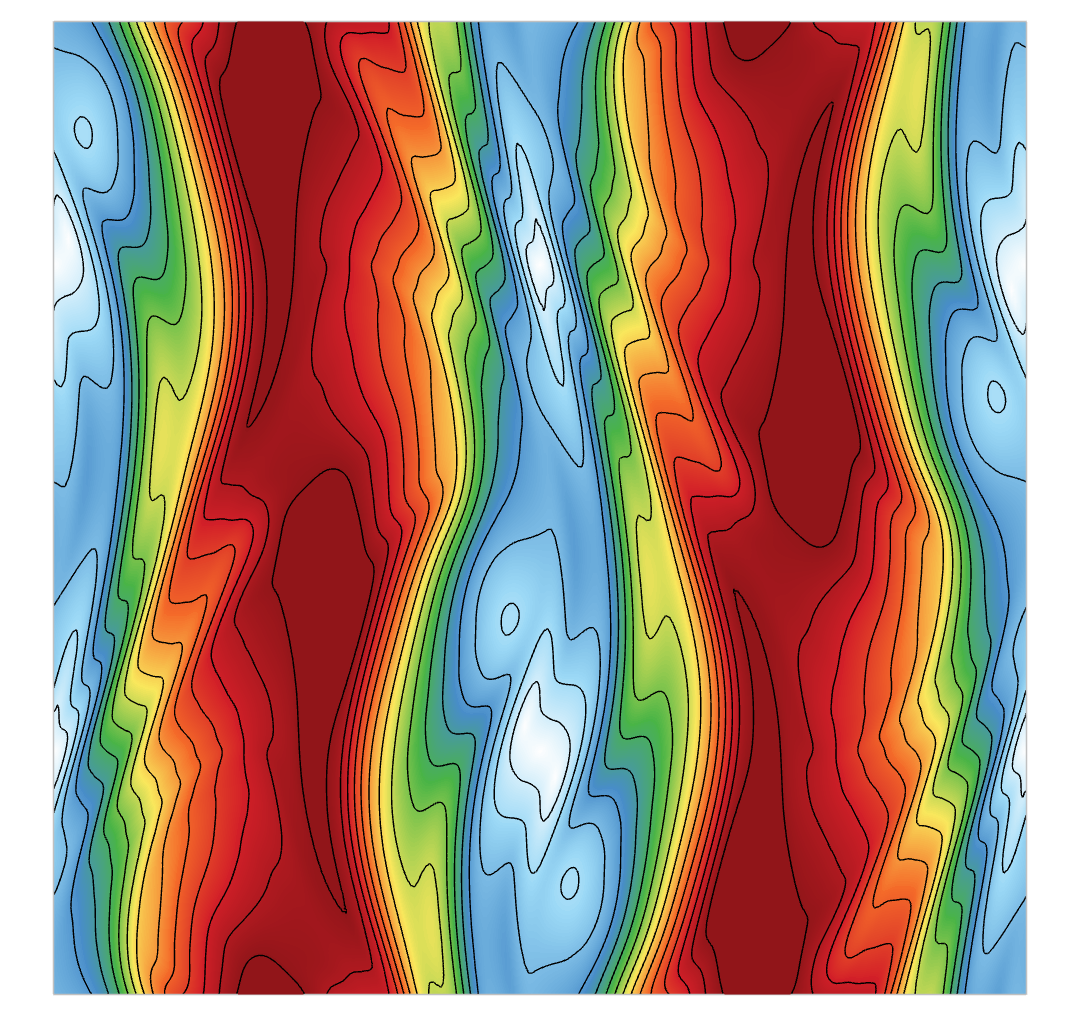}
\end{subfigure}
\caption{Snapshots of finite element approximations to $|\mathbf u|$ for the Taylor--Green vortex obtained on a uniform quadrilateral mesh with $256\times 256$ elements using the lumped mass matrix and the time step $\Delta t = 1/512$.}\label{fig:tg-v}
\end{figure}

\begin{figure}[ht!]
\centering
\begin{subfigure}[b]{0.24\textwidth}
\caption{$t=1,p\in[-\frac12,\frac12]$}
\includegraphics[width=\textwidth]{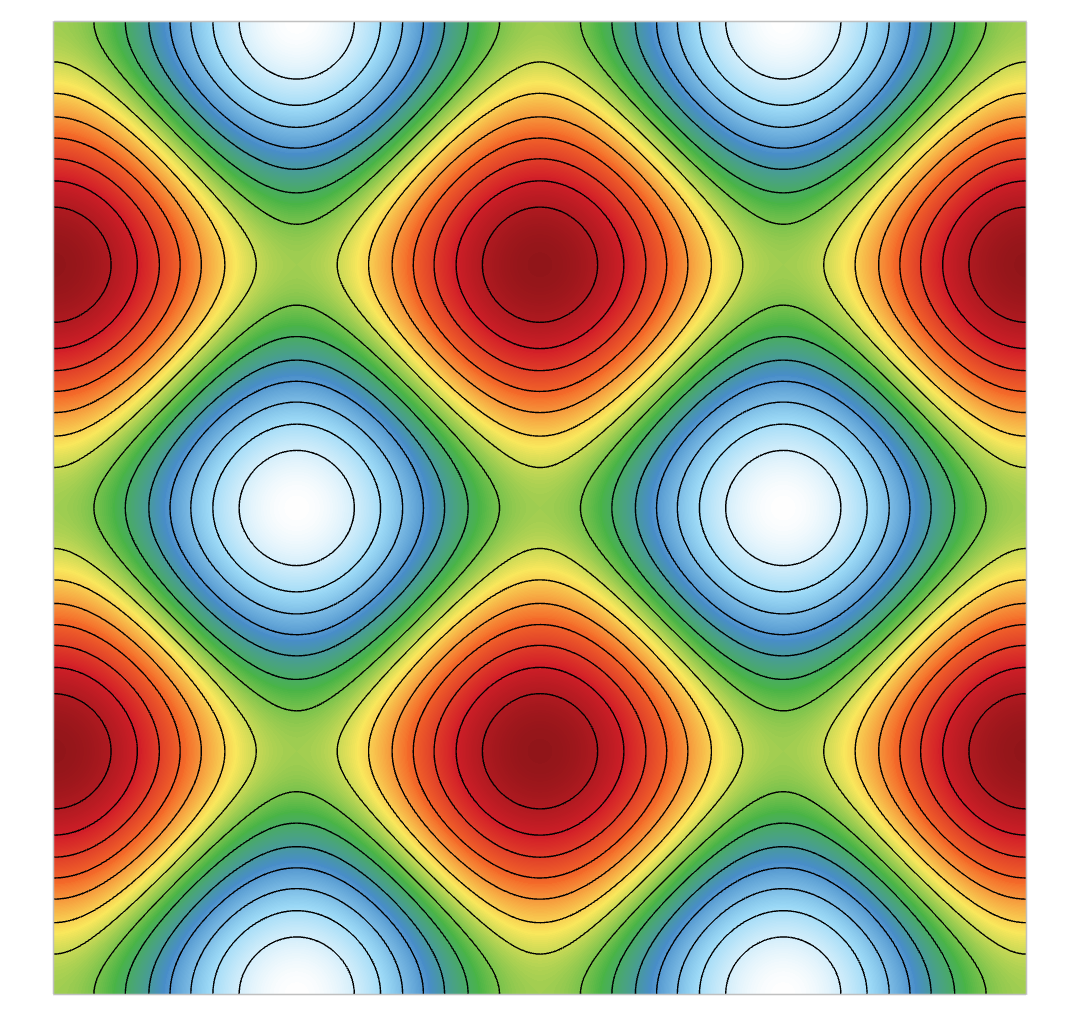}
\end{subfigure}
\begin{subfigure}[b]{0.24\textwidth}
\caption{$t=35,p\in[-\frac12,\frac12]$}
\includegraphics[width=\textwidth]{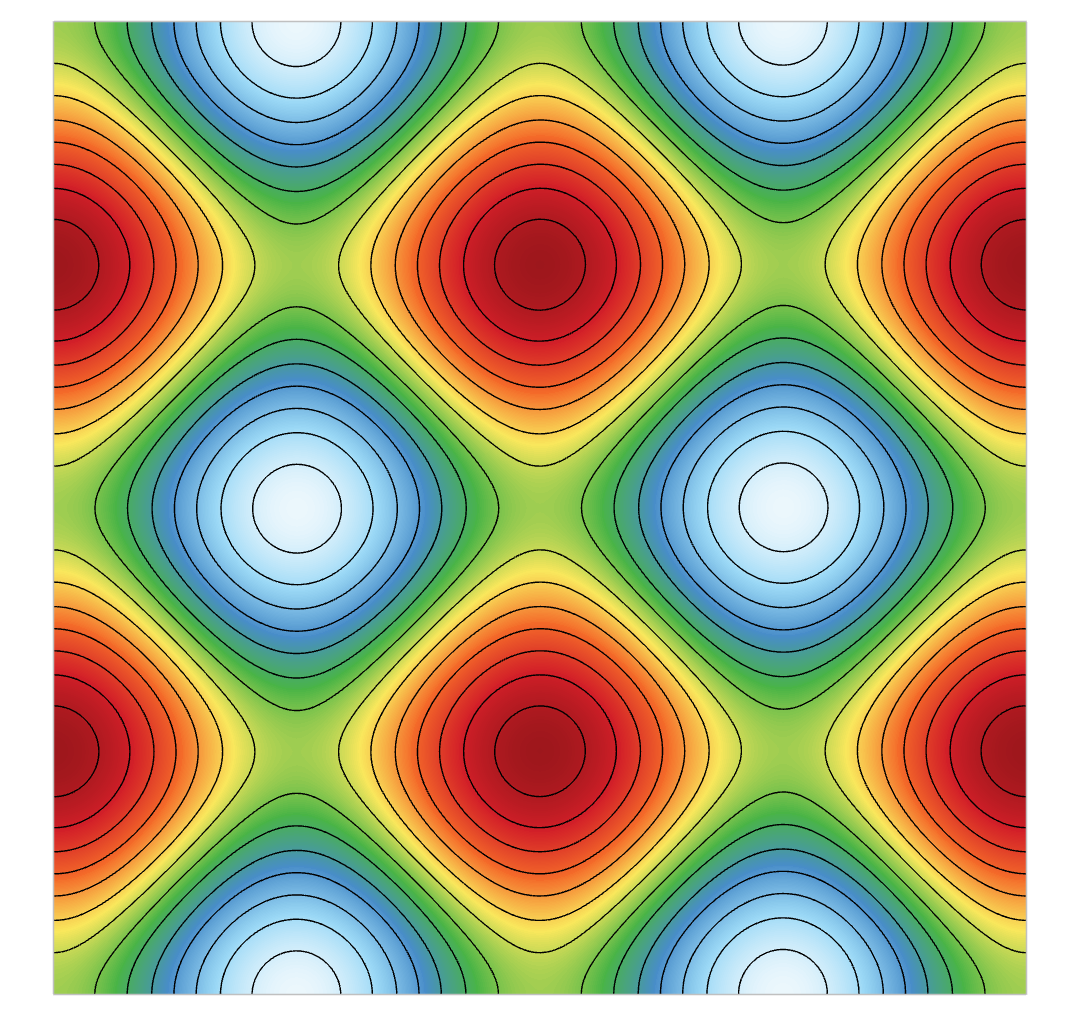}
\end{subfigure}
\begin{subfigure}[b]{0.24\textwidth}
\caption{$t=40,p\in[-\frac12,\frac12]$}
\includegraphics[width=\textwidth]{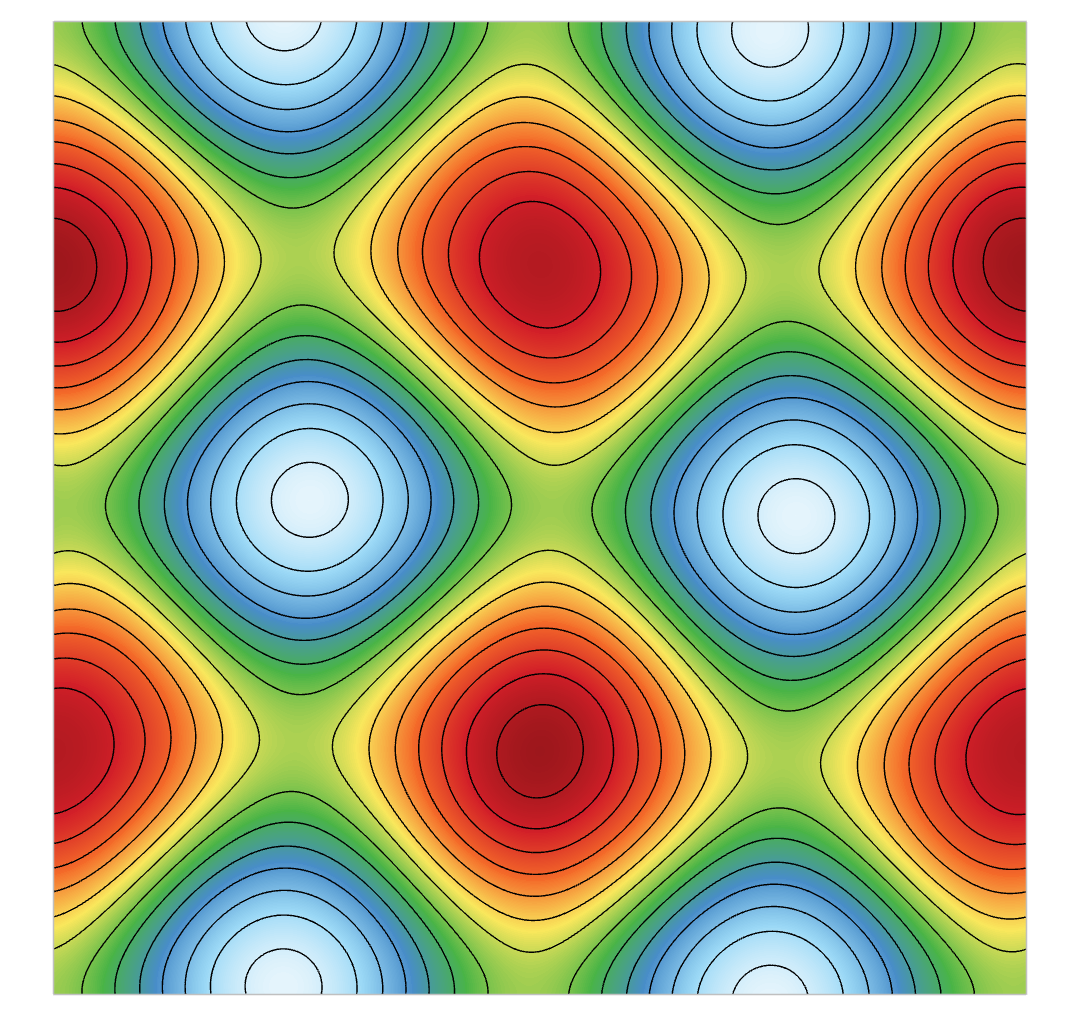}
\end{subfigure}
\begin{subfigure}[b]{0.24\textwidth}
\caption{$t=41,p\in[-\frac12,\frac12]$}
\includegraphics[width=\textwidth]{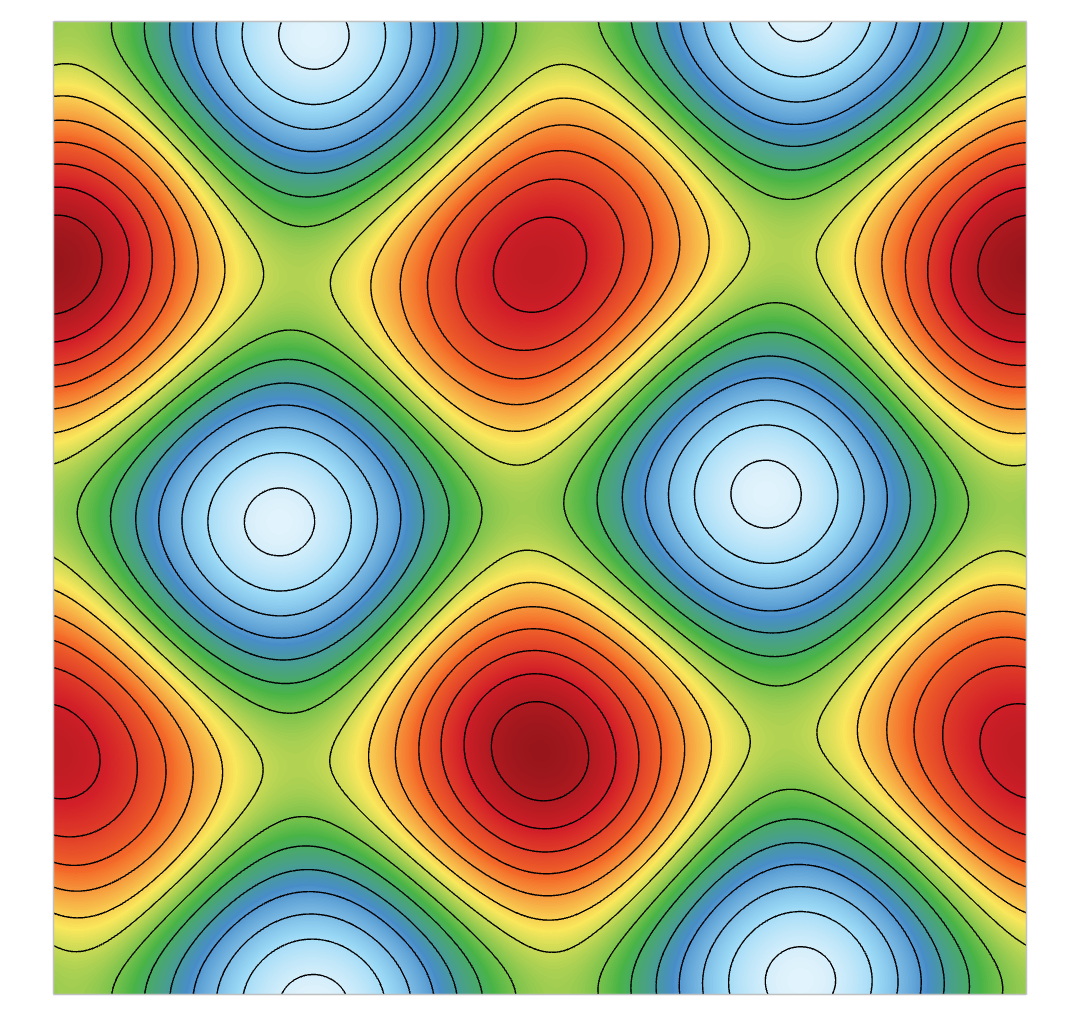}
\end{subfigure}
\begin{subfigure}[b]{0.24\textwidth}
\caption{$t=42,p\in[-\frac12,\frac12]$}
\includegraphics[width=\textwidth]{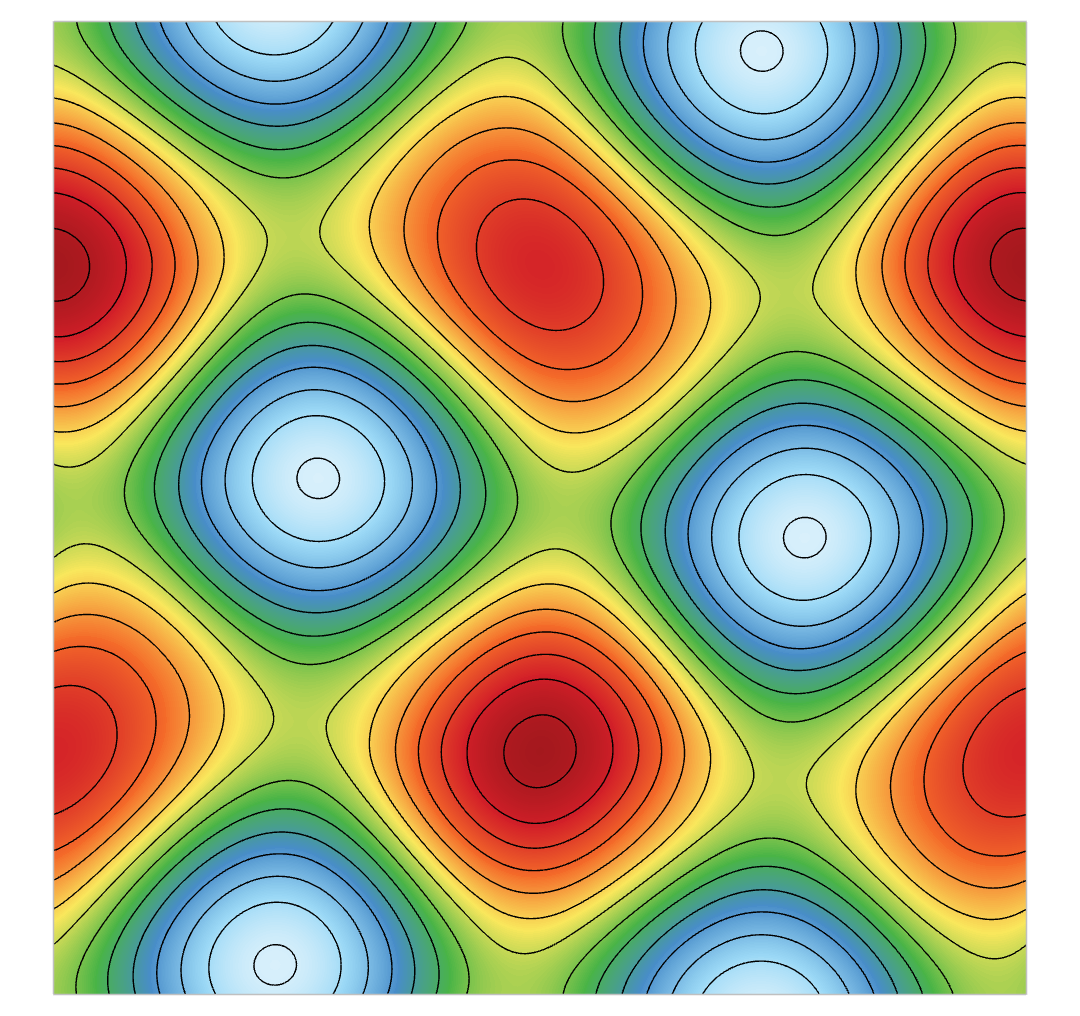}
\end{subfigure}
\begin{subfigure}[b]{0.24\textwidth}
\caption{$t=43,p\in[-\frac25,\frac25]$}
\includegraphics[width=\textwidth]{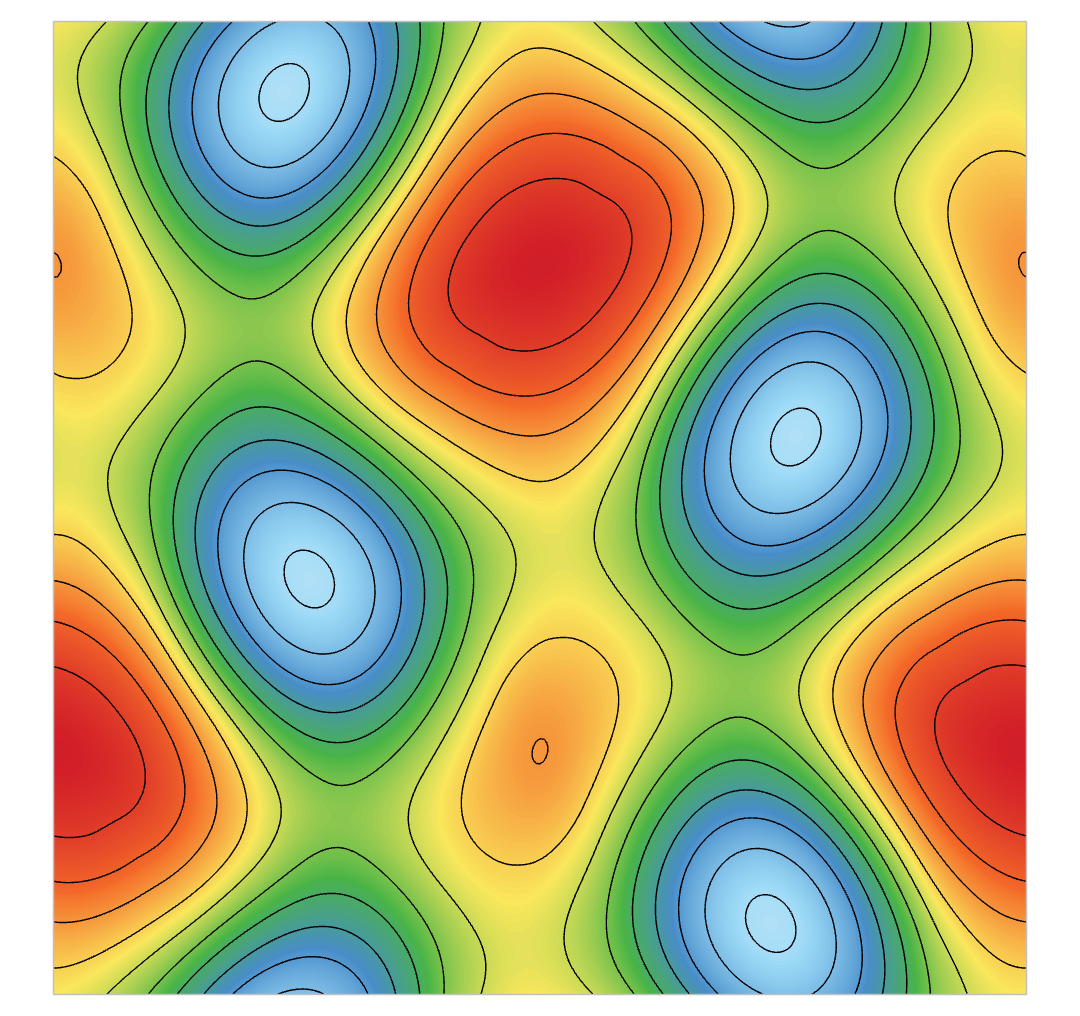}
\end{subfigure}
\begin{subfigure}[b]{0.24\textwidth}
\caption{$t=44,p\in[-\frac15,\frac25]$}
\includegraphics[width=\textwidth]{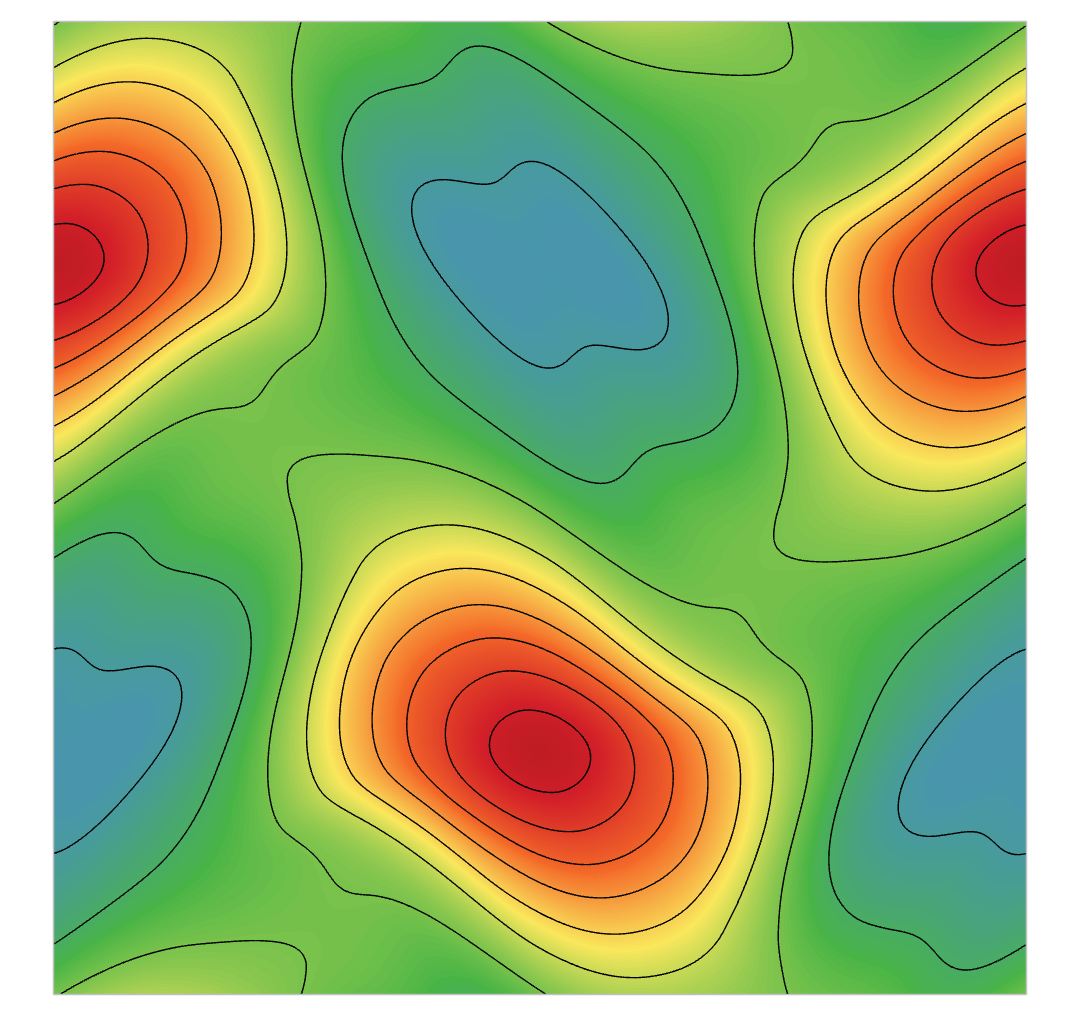}
\end{subfigure}
\begin{subfigure}[b]{0.24\textwidth}
\caption{$t=45,p\in[-\frac15,\frac3{10}]$}
\includegraphics[width=\textwidth]{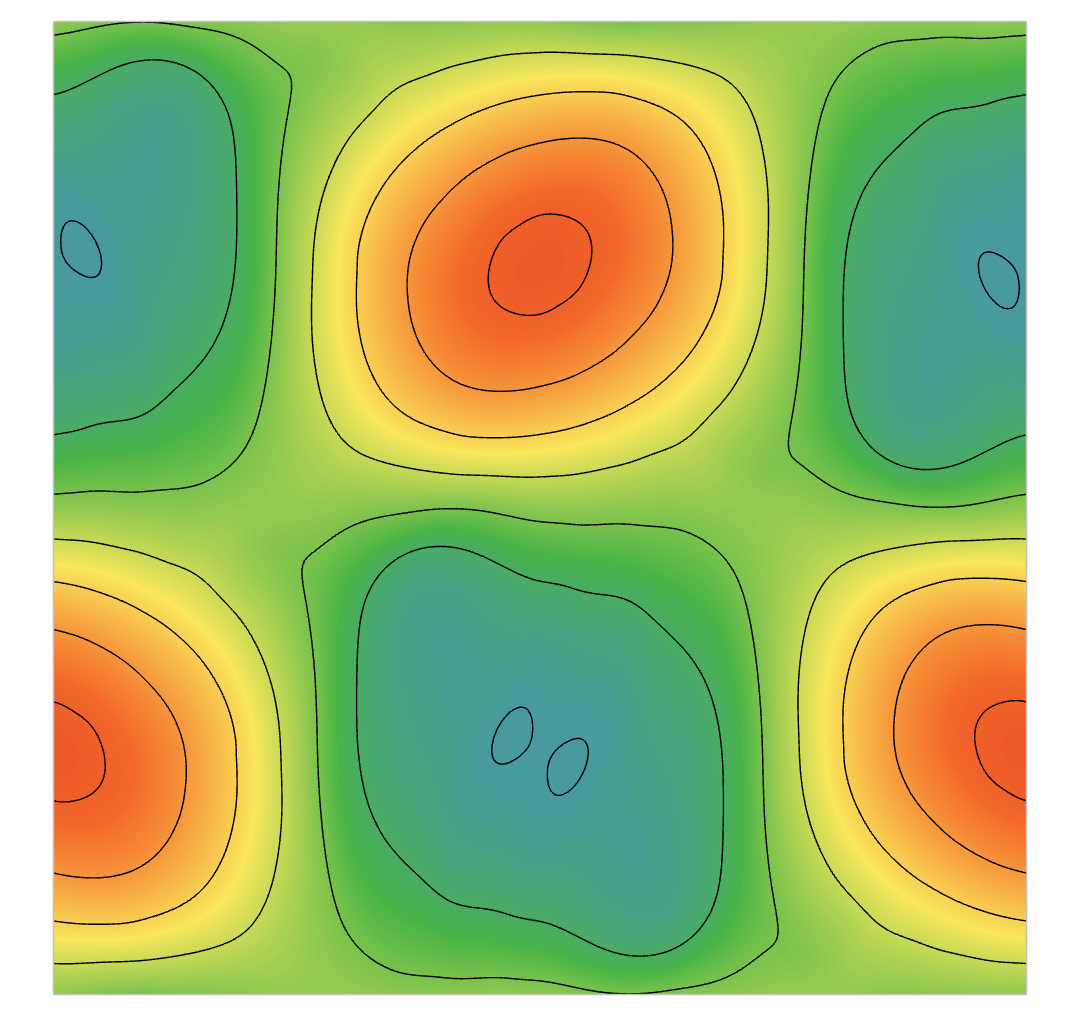}
\end{subfigure}
\caption{Snapshots of finite element approximations to $p$ for the Taylor--Green vortex obtained on a uniform quadrilateral mesh with $256\times 256$ elements using the lumped mass matrix and $\Delta t = 1/512$.}\label{fig:tg-p}
\end{figure}

\begin{figure}[ht!]
\centering
\includegraphics[width=0.5\textwidth]{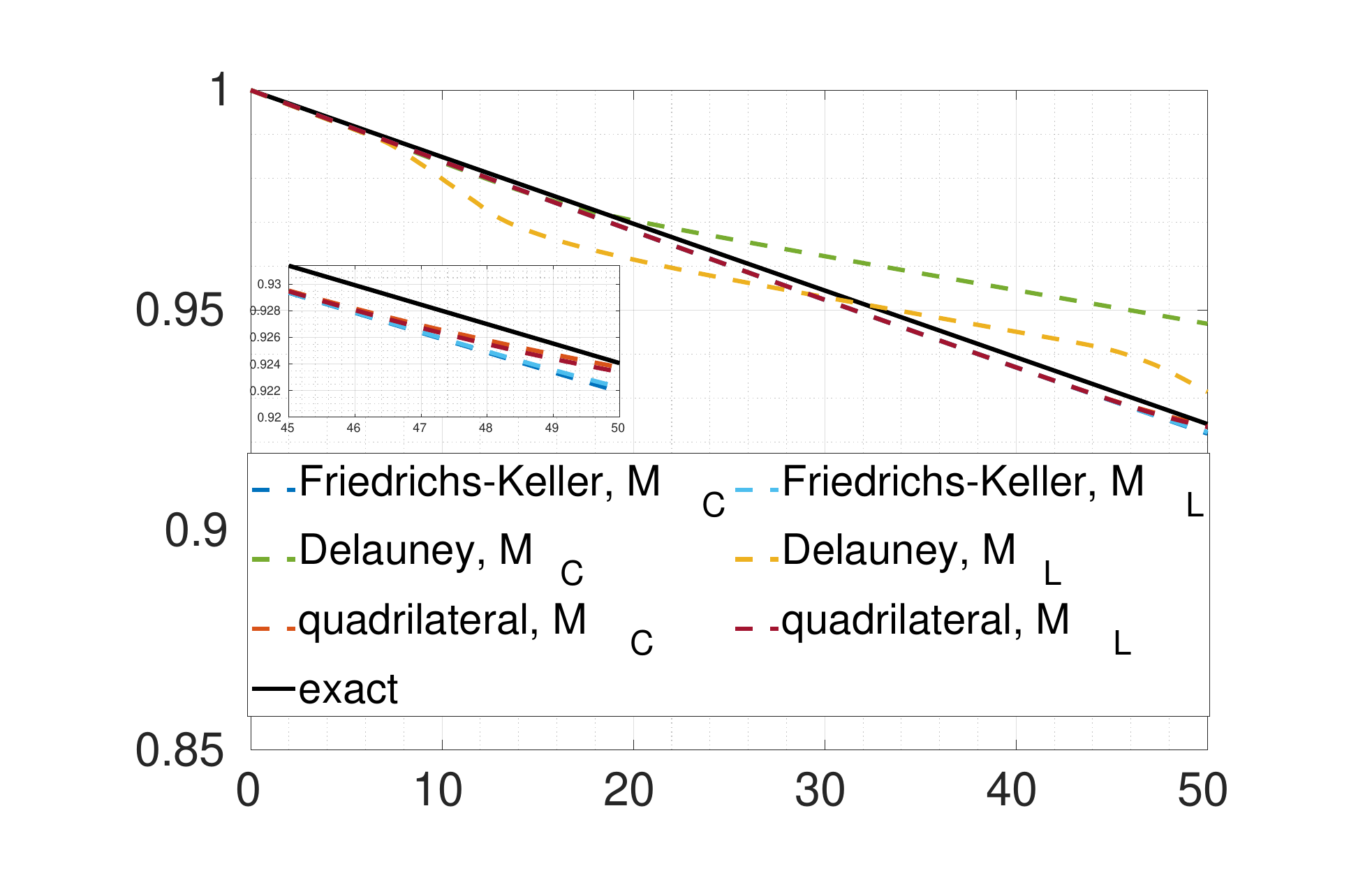}
\caption{Long-term energy evolutions for the Taylor--Green vortex on finest meshes.}\label{fig:tg-long-ene}
\end{figure}

In Figs.~\ref{fig:tg-v}--\ref{fig:tg-p}, we observe the so-called inverse cascade.
The originally coherent vortical structures of the Taylor cells, as seen in Figs. (a) and (b), undergo a self-organization after a startup phase (here after $t=35$).
The resulting flow patterns are depicted in Figs. (c)--(h); see also~\cite{lube2020}.
A mathematical discussion of such self-organization in 2D turbulent incompressible flows can be found in~\cite{van-groesen1988}.

\subsection{Gresho vortex}
\label{sec:gresho}

Next, we solve a test problem with a nonsmooth solution, known as the Gresho vortex~\cite{gresho,gresho1990}.
The physical viscosity $\nu$ is zero in this experiment.
That is, we are now considering the incompressible Euler limit \eqref{eq_incompressible_Euler} of \eqref{eq_incompressible}.
The exact solution coincides with the initial data, which is defined as follows:
\begin{align*}
r(x,y) ={}& \sqrt{x^2+y^2}, \qquad
\mathbf u(x,y) = (-y,x) \begin{cases}
5 & \text{if } r(x,y) \le \frac15, \\
\frac2{r(x,y)} - 5 & \text{if } \frac15 \le r(x,y) \le \frac25, \\
0 & \text{otherwise,}
\end{cases} \\
p(x,y) ={}& \begin{cases}
5 + \frac{25}2 r(x,y)^2 & \text{if } r(x,y) \le \frac15, \\
9 - 4\log\left(\frac15\right) + \frac{25}2 r(x,y)^2 - 20r(x,y) + 4\log(r(x,y)) & \text{if } \frac15 \le r(x,y) \le \frac25, \\
3 + 4\log(2) & \text{otherwise.}
\end{cases}
\end{align*}
The purpose of this popular 2D test is to check how well the method under investigation can preserve the exact steady state.
Note that the velocity is of class $C^0$, while the pressure is of class $C^1$.
Therefore, second-order convergence for the pressure is possible, while, according to~\cite{gresho}, the empirical convergence rate for the velocity is around 1.4.
We also remark that we need to normalize $p$ in the above definition to satisfy the zero mean condition.
To this end, we subtract $p_0 = \int_\Omega p(x,y) \mathrm dx\mathrm dy$ from $p$.
The value of~$p_0$ can be accurately evaluated by subdividing $\Omega$ and changing to polar coordinates.

We run the same experiments as for the Taylor--Green vortex at the beginning of Section~\ref{sec:tg}.
The computational domain is chosen to be $\Omega=(-0.5,0.5)^2$, which yields $p_0\approx 5.688812918144054$.
Simulations are terminated at the pseudo-time $t=1$.
We use the same combinations of meshes and time steps as in Section~\ref{sec:tg}.
The results of this grid convergence study are presented in Tables~\ref{tab:gresho1}--\ref{tab:gresho3}.

\begin{table}[ht!]
\centering
\begin{tabular}{c||cc|cc||cc|cc}
$\sqrt2/h$ & $e_\mathbf{u}^C$ & EOC & $e_p^C$ & EOC & $e_\mathbf{u}^L$ & EOC & $e_p^L$ & EOC \\
\hline
16  & 5.92E-02 &      & 2.23E-02 &      & 5.01E-02 &      & 2.15E-02 &      \\
32  & 1.95E-02 & 1.60 & 6.40E-03 & 1.80 & 1.72E-02 & 1.54 & 6.09E-03 & 1.82 \\
64  & 7.02E-03 & 1.48 & 1.58E-03 & 2.02 & 5.55E-03 & 1.63 & 1.52E-03 & 2.00 \\
128 & 2.54E-03 & 1.47 & 3.82E-04 & 2.05 & 1.84E-03 & 1.59 & 3.74E-04 & 2.02 \\
256 & 9.67E-04 & 1.39 & 9.37E-05 & 2.03 & 6.56E-04 & 1.48 & 9.21E-05 & 2.02 \\
\hline
average && 1.49 && 1.98 && 1.56 && 1.97
\end{tabular}
\caption{L$^2(\Omega)$ convergence for the Gresho vortex on Friedrichs--Keller triangulations.}\label{tab:gresho1}
\end{table}

\begin{table}[ht!]
\centering
\begin{tabular}{c||cc|cc||cc|cc}
$1/h$ & $e_\mathbf{u}^C$ & EOC & $e_p^C$ & EOC & $e_\mathbf{u}^L$ & EOC & $e_p^L$ & EOC \\
\hline
10  & 1.13E-01 &      & 4.23E-02 &      & 5.67E-02 &      & 2.83E-02 &      \\
20  & 3.72E-02 & 1.60 & 1.02E-02 & 2.05 & 2.35E-02 & 1.27 & 8.77E-03 & 1.69 \\
40  & 1.37E-02 & 1.44 & 2.51E-03 & 2.02 & 8.57E-03 & 1.45 & 2.30E-03 & 1.93 \\
80  & 5.18E-03 & 1.40 & 7.18E-04 & 1.81 & 3.02E-03 & 1.51 & 5.97E-04 & 1.95 \\
160 & 1.78E-03 & 1.54 & 1.71E-04 & 2.07 & 1.13E-03 & 1.42 & 1.59E-04 & 1.91 \\
\hline
average && 1.50 && 1.99 && 1.41 && 1.87
\end{tabular}
\caption{L$^2(\Omega)$ convergence for the Gresho vortex on unstructured Delaunay meshes.}
\end{table}

\begin{table}[ht!]
\centering
\begin{tabular}{c||cc|cc||cc|cc}
$\sqrt2/h$ & $e_\mathbf{u}^C$ & EOC & $e_p^C$ & EOC & $e_\mathbf{u}^L$ & EOC & $e_p^L$ & EOC \\
\hline
16  & 4.34E-02 &      & 2.02E-02 &      & 3.99E-02 &      & 1.95E-02 &      \\
32  & 1.44E-02 & 1.59 & 5.55E-03 & 1.86 & 1.31E-02 & 1.61 & 5.41E-03 & 1.85 \\
64  & 5.32E-03 & 1.44 & 1.46E-03 & 1.93 & 4.76E-03 & 1.46 & 1.40E-03 & 1.94 \\
128 & 1.78E-03 & 1.58 & 3.69E-04 & 1.98 & 1.60E-03 & 1.58 & 3.61E-04 & 1.96 \\
256 & 6.81E-04 & 1.39 & 9.00E-05 & 2.03 & 5.91E-04 & 1.43 & 8.89E-05 & 2.02 \\
\hline
average && 1.50 && 1.95 && 1.52 && 1.94
\end{tabular}
\caption{L$^2(\Omega)$ convergence for the Gresho vortex on uniform quadrilateral grids.}\label{tab:gresho3}
\end{table}

Additionally, we track the kinetic energy evolution in pseudo-time for all runs.
The results shown in Fig.~\ref{fig:gresho-ene} are consistent with the findings from Section~\ref{sec:tg}.
While the exact energy is constant at steady state, all numerical approximations are dissipating energy.
This issue cannot be avoided because for nonconstant velocities (as in this test), some of the symmetric terms $G_{ij}$ that appear in \eqref{etadot2} are nonzero, and energy is therefore lost over time.
Increased resolution reduces these errors, and optimal convergence rates are attained for the velocity and pressure approximations.
Snapshots of the velocity magnitude and pressure at $t=1$ obtained on the finest Delaunay mesh are displayed in Fig.~\ref{fig:gresho}.

\begin{figure}[ht!]
\centering
\begin{subfigure}[b]{0.32\textwidth}
\caption{Friedrichs--Keller}
\includegraphics[width=\textwidth,trim=75 0 50 0,clip]{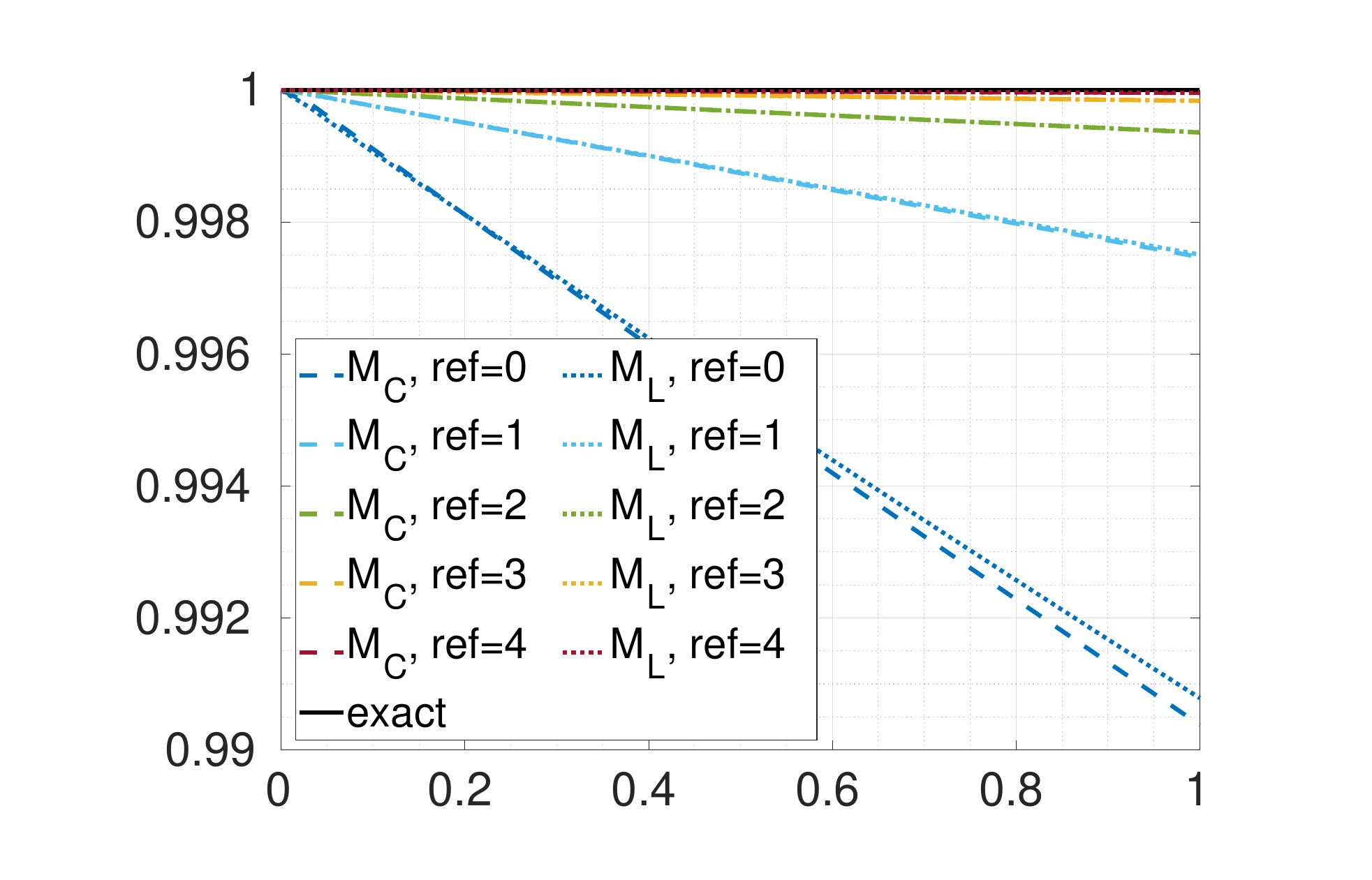}
\end{subfigure}
\begin{subfigure}[b]{0.32\textwidth}
\caption{Unstructured Delaunay}
\includegraphics[width=\textwidth,trim=75 0 50 0,clip]{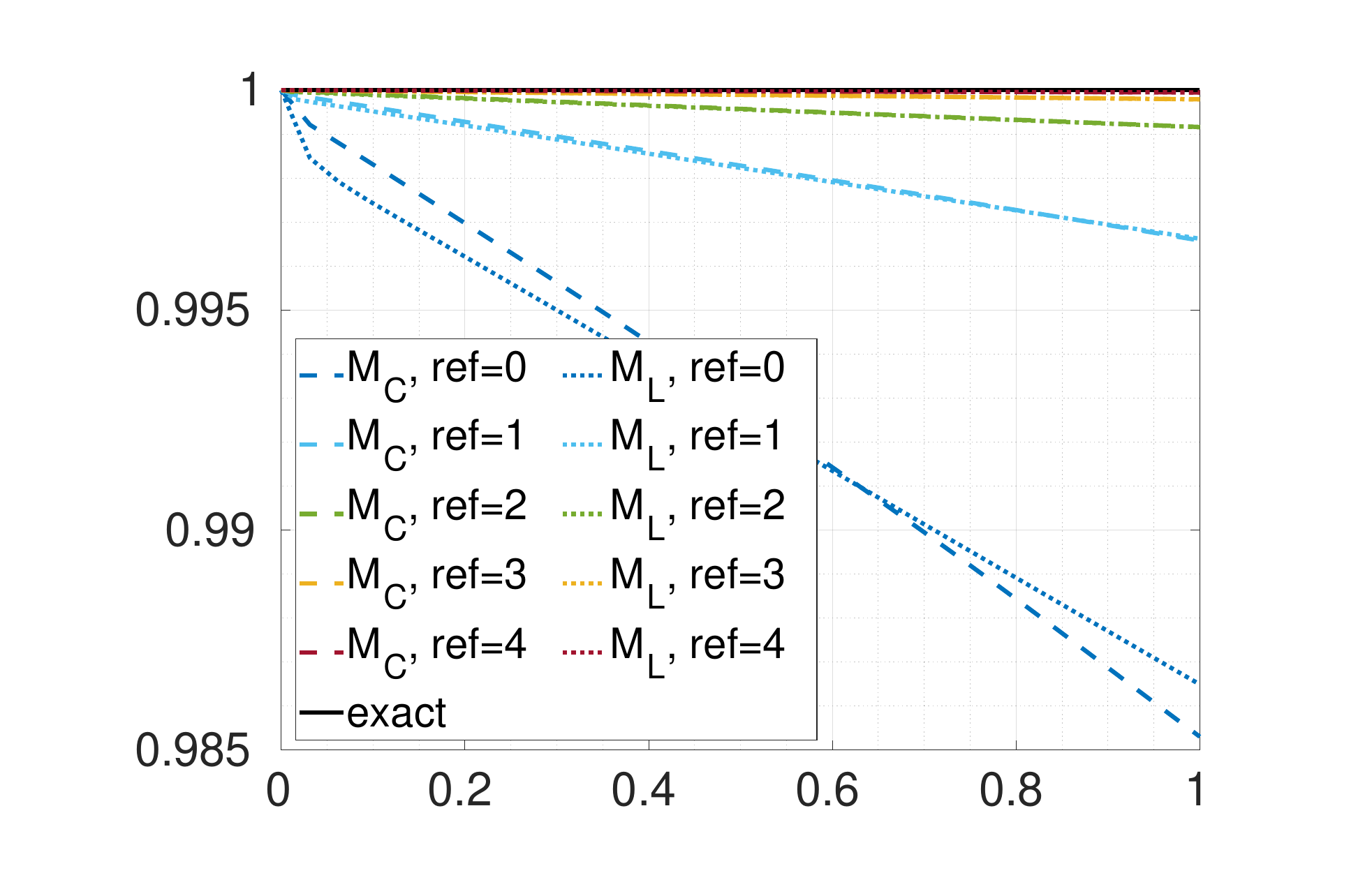}
\end{subfigure}
\begin{subfigure}[b]{0.32\textwidth}
\caption{Uniform quadrilateral}
\includegraphics[width=\textwidth,trim=75 0 50 0,clip]{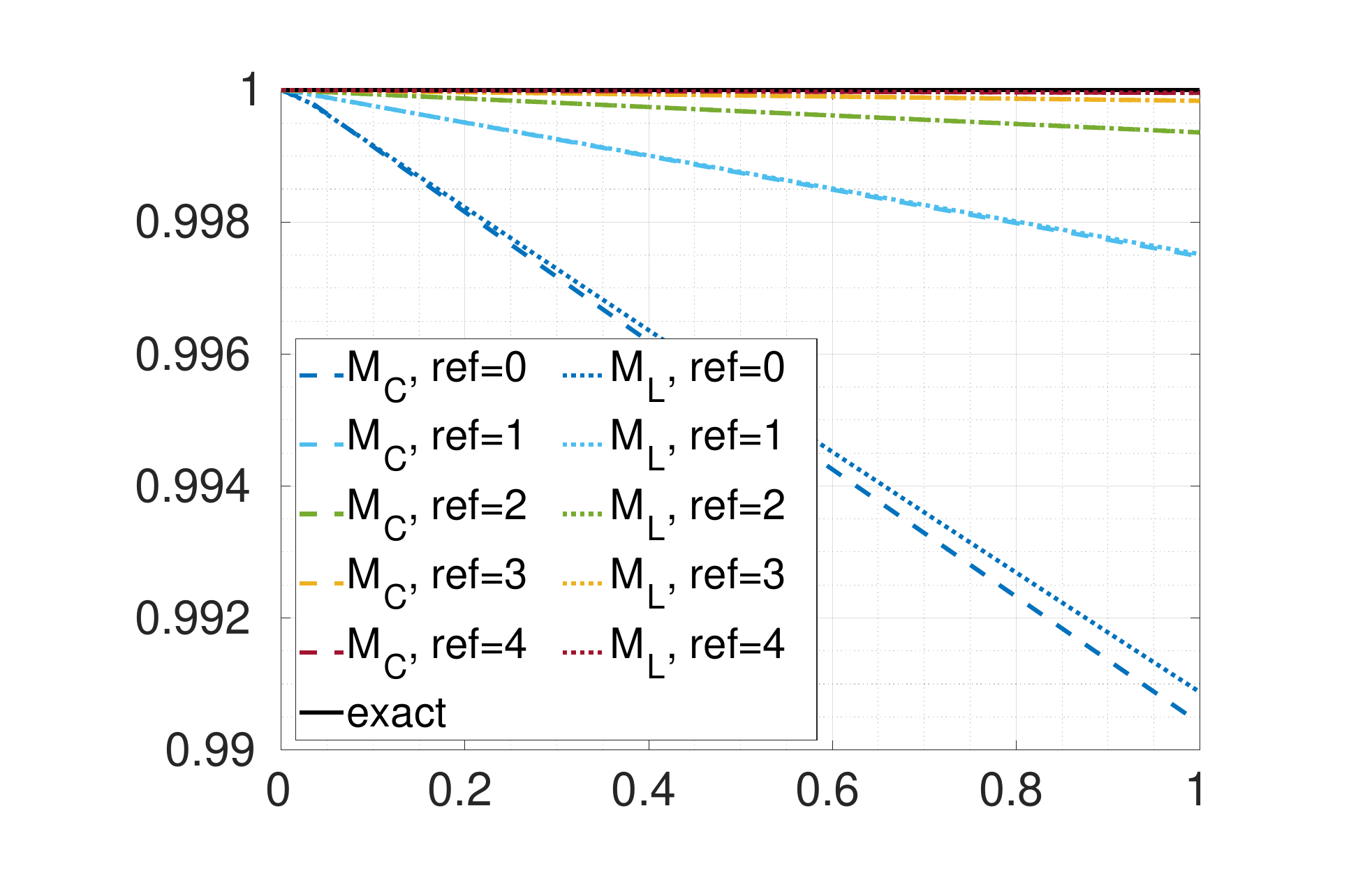}
\end{subfigure}
\caption{Energy evolutions for the Gresho vortex, exact profile and numerical results obtained on refinement levels \texttt{ref} for three types of meshes.}\label{fig:gresho-ene}
\end{figure}

\begin{figure}[ht!]
\centering
\begin{subfigure}[b]{0.24\textwidth}
\caption{$|\mathbf u|\in{}$[3.0E-6,0.99]}
\includegraphics[width=\textwidth]{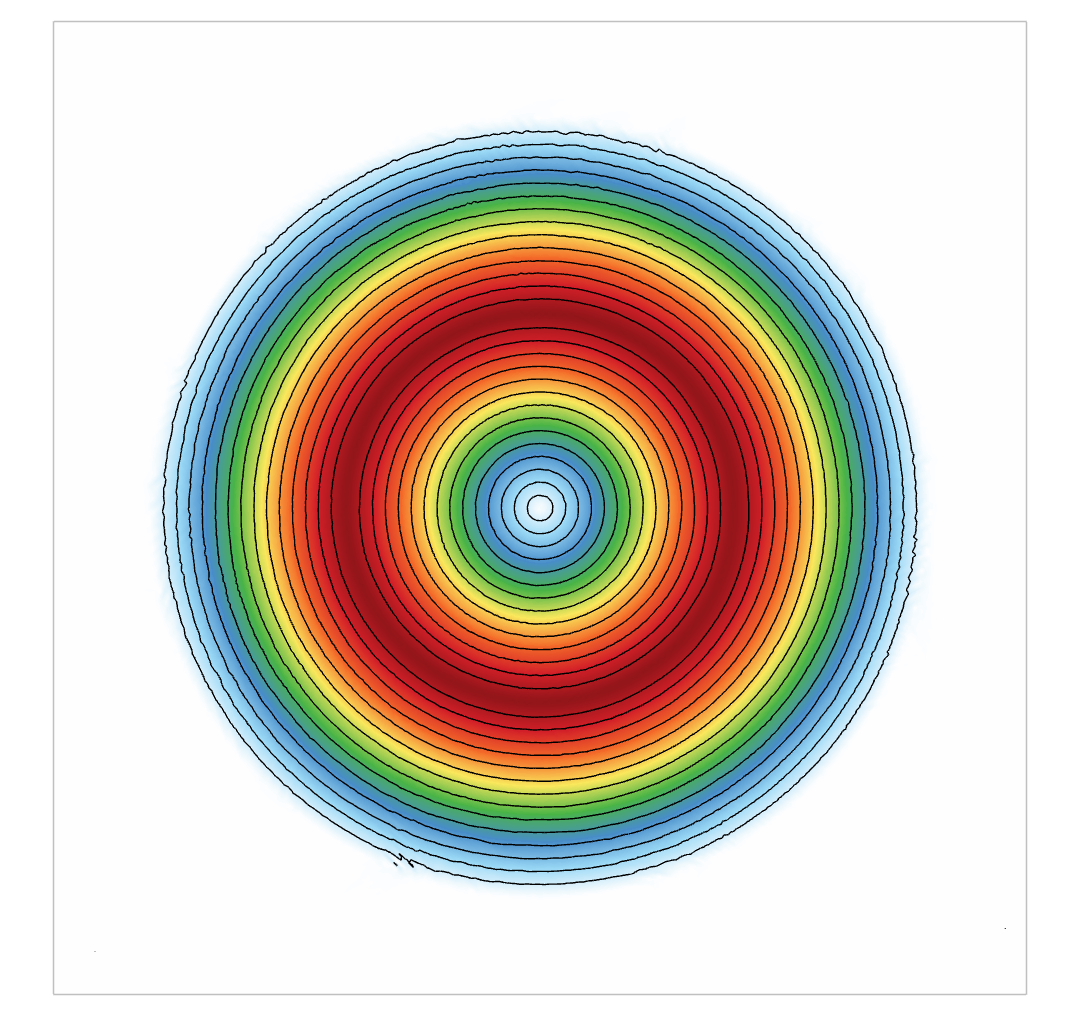}
\end{subfigure}
\begin{subfigure}[b]{0.24\textwidth}
\caption{$|\mathbf u|\in{}$[1.0E-6,0.99]}
\includegraphics[width=\textwidth]{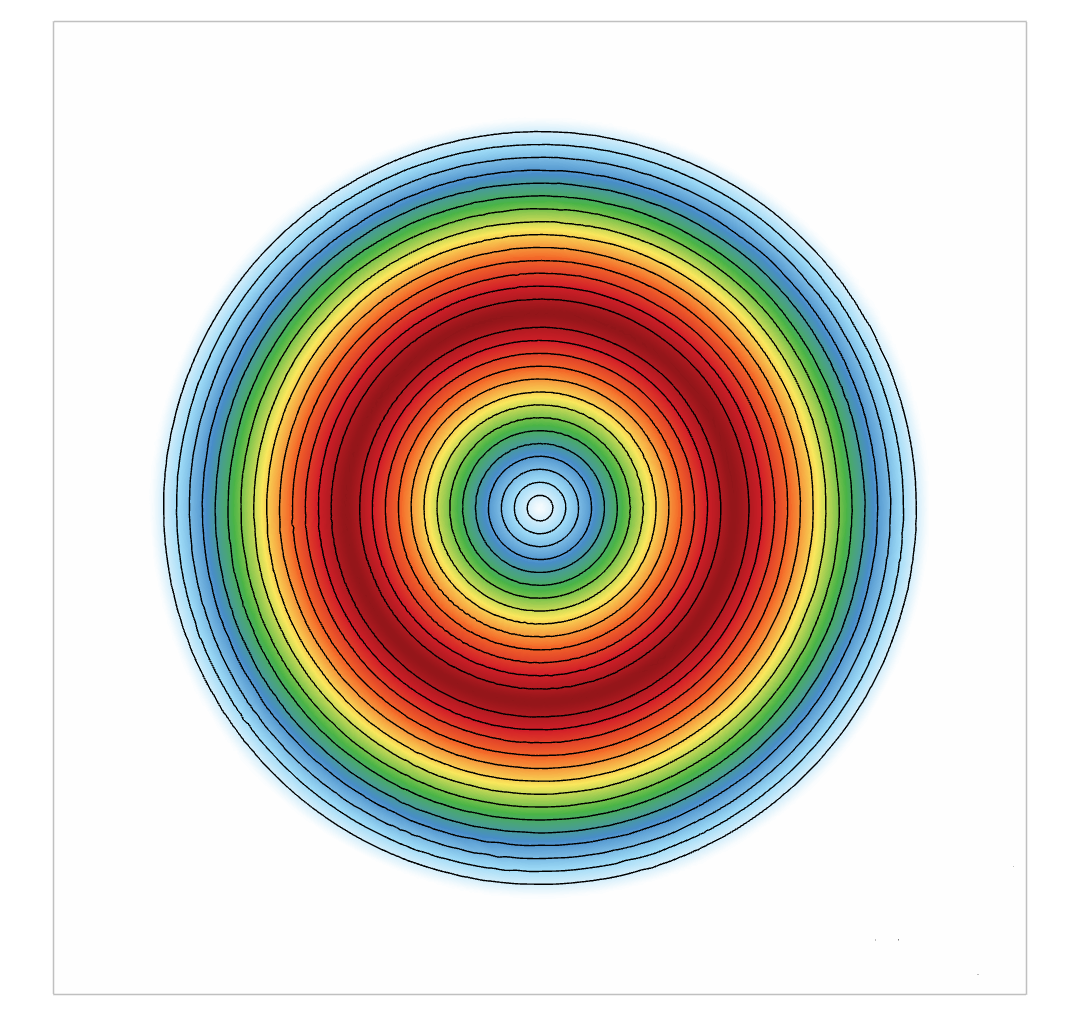}
\end{subfigure}
\begin{subfigure}[b]{0.24\textwidth}
\caption{$p\in{}$[-0.69,0.084]}
\includegraphics[width=\textwidth]{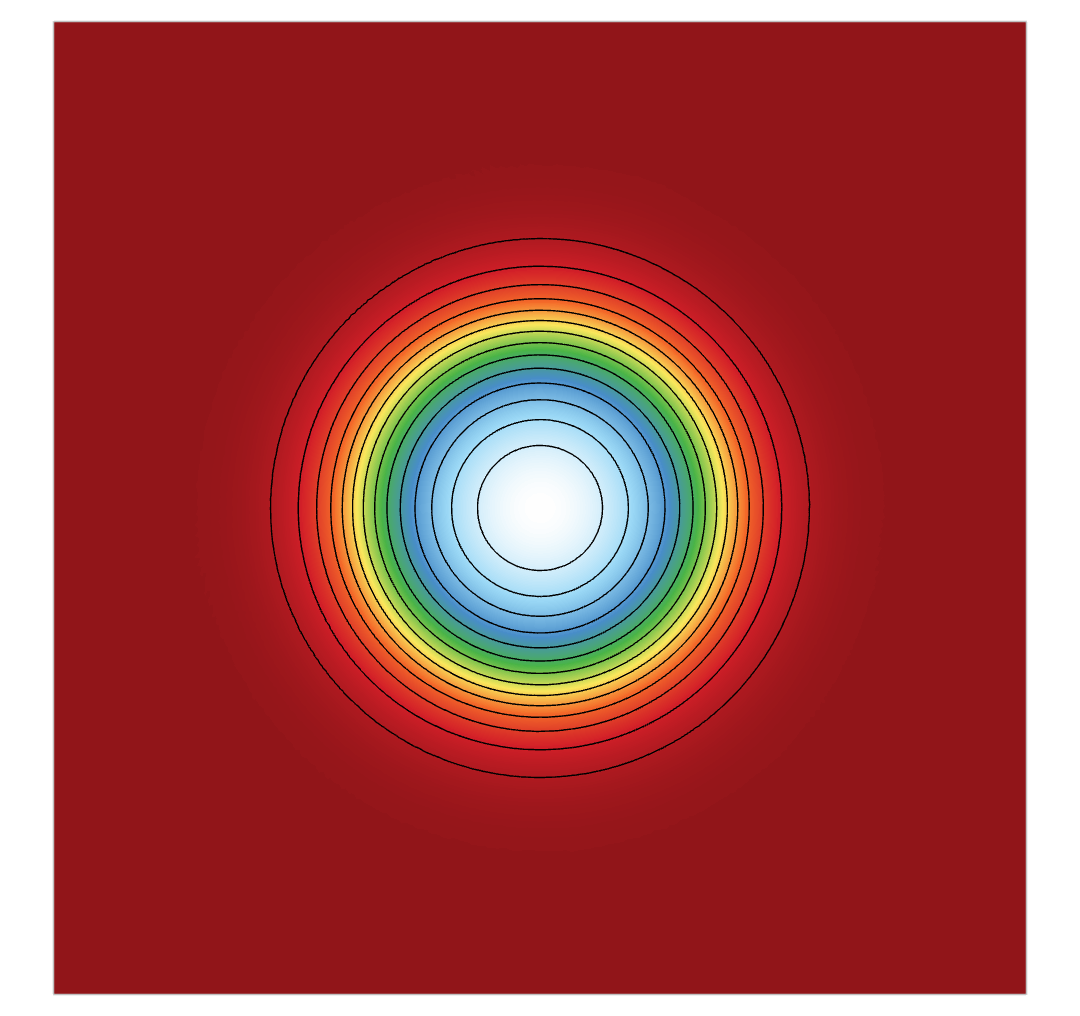}
\end{subfigure}
\begin{subfigure}[b]{0.24\textwidth}
\caption{$p\in{}$[-0.69,0.084]}
\includegraphics[width=\textwidth]{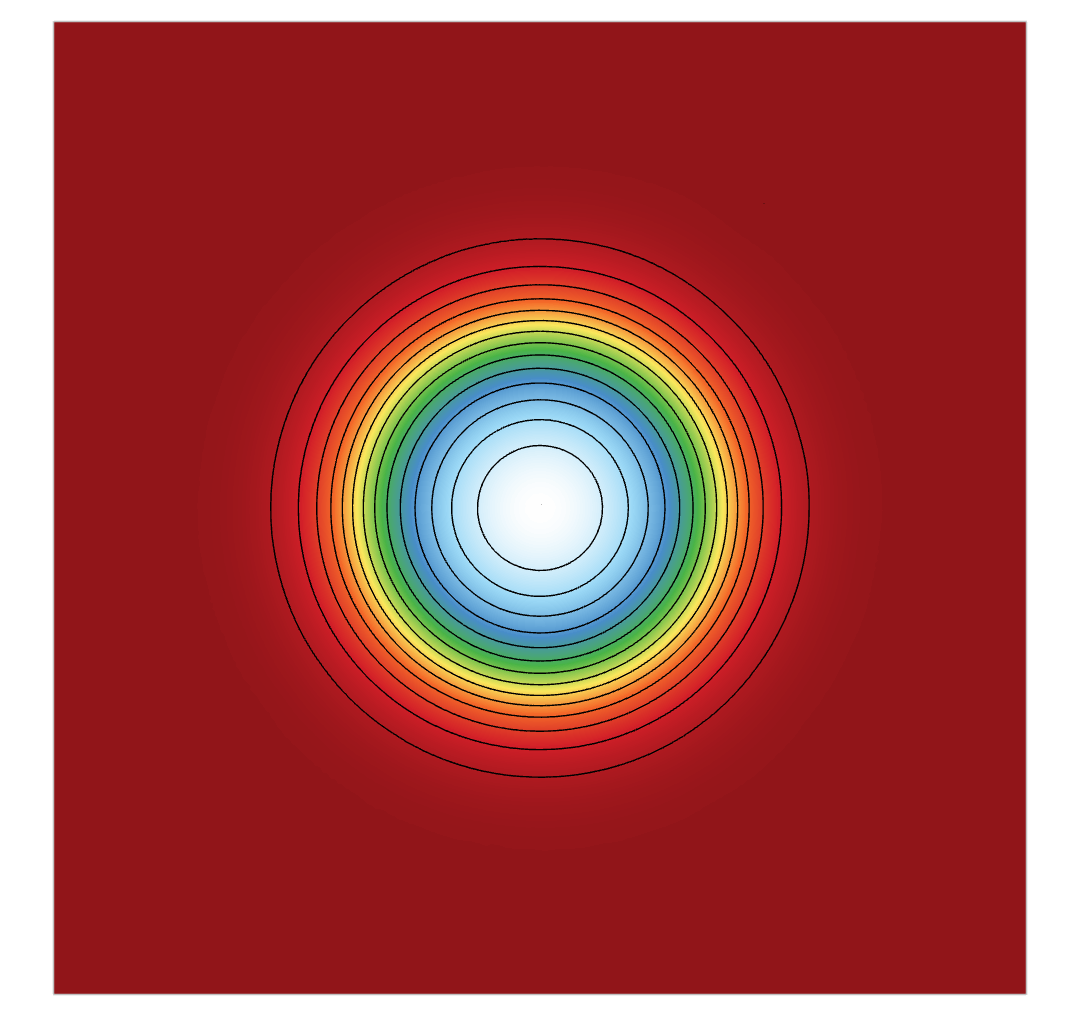}
\end{subfigure}
\caption{Finite element approximations to $|\mathbf u|$ and $p$ for the Gresho vortex at $t=1$ obtained on the finest level of the unstructured Delaunay~mesh using the consistent (panels a,c) and lumped (panels b,d) mass matrices.}\label{fig:gresho}
\end{figure}

The Gresho vortex is also a perfect example for illustrating the necessity of using stabilization techniques.
To show the superiority of \eqref{galerkin-stab} over \eqref{galerkin}, we ran additional tests for a combination of the Galerkin space discretization with implicit-explicit (IMEX) time stepping of Euler type.
In the so-defined fully discrete scheme, the nonlinear convective term
$
(\varphi_i,\mathbf u_h \cdot \nabla\mathbf u_h)_{L^2(\Omega)}
$
is treated explicitly, as in the forward Euler version, while all other terms are treated implicitly, as in the backward Euler version.
In our numerical studies for the Galerkin IMEX-Euler method with consistent and lumped mass matrices, we use a coarse Friedrichs--Keller triangulation with $h=\sqrt 2 /32$ in combination with time steps $\Delta t = 0.5^k$, where $k\in \{6,8,10,12\}$.
Note that for the finest time step, the ratio $\Delta t/h\approx 0.0055$ is already extremely small.
The energy curves for this experiment are displayed in Fig.~\ref{fig:gresho-coarse} along with the discrete initial distribution of the velocity magnitude.
For each of the eight runs, there exists a time at which kinetic energy starts growing and, as a consequence, numerical instabilities arise.
We observe severe blowups, as the energy quickly tends to infinity, thus rendering the corresponding approximations useless.
As before, lumping of mass matrices is beneficial for the unstable $\mathbb P_1\mathbb P_1$ discretization, which manifests itself in the fact that the instabilities occur later than in the corresponding consistent mass matrix runs.
In our experience, breakdowns and blowups due to spurious energy production can only be delayed to some extent but not avoided altogether without energy stabilization.

\begin{figure}[ht!]
\centering
\begin{subfigure}[t]{0.24\textwidth}
\caption{$|\mathbf u_0|\le 0.9598$}
\includegraphics[width=\textwidth]{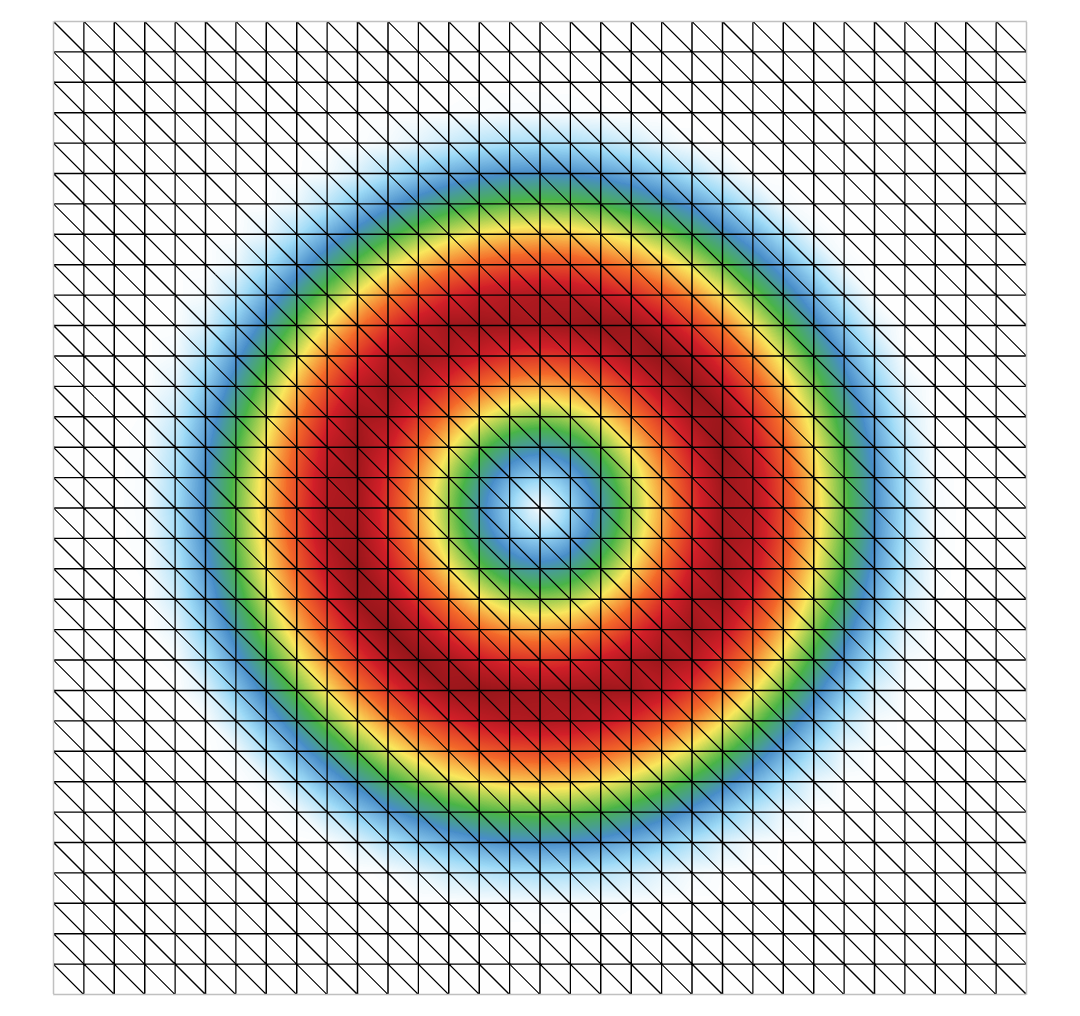}
\end{subfigure}
\begin{subfigure}[t]{0.37\textwidth}
\caption{Consistent mass matrix}
\includegraphics[width=\textwidth]{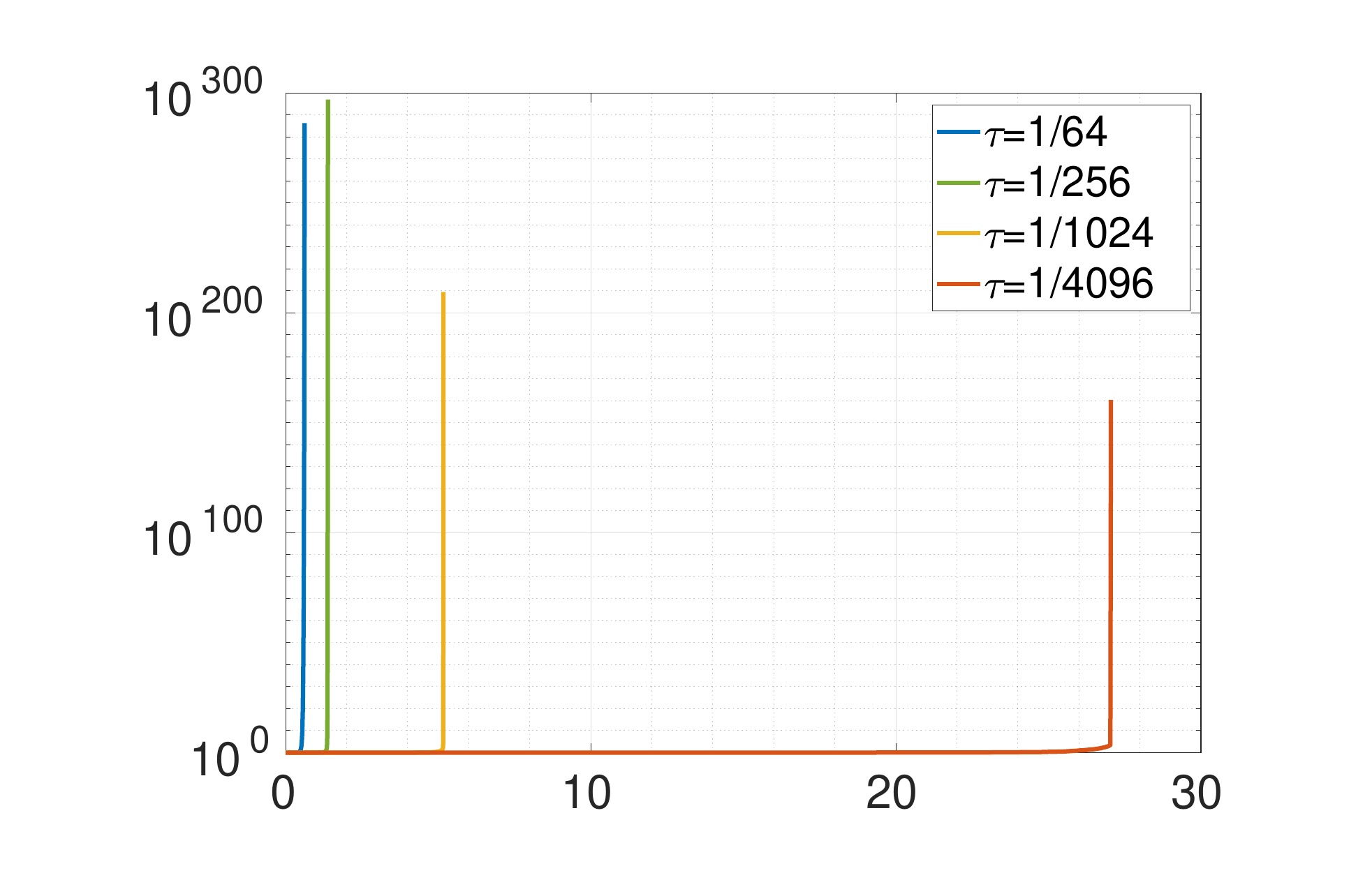}
\end{subfigure}
\begin{subfigure}[t]{0.37\textwidth}
\caption{Lumped mass matrix}
\includegraphics[width=\textwidth]{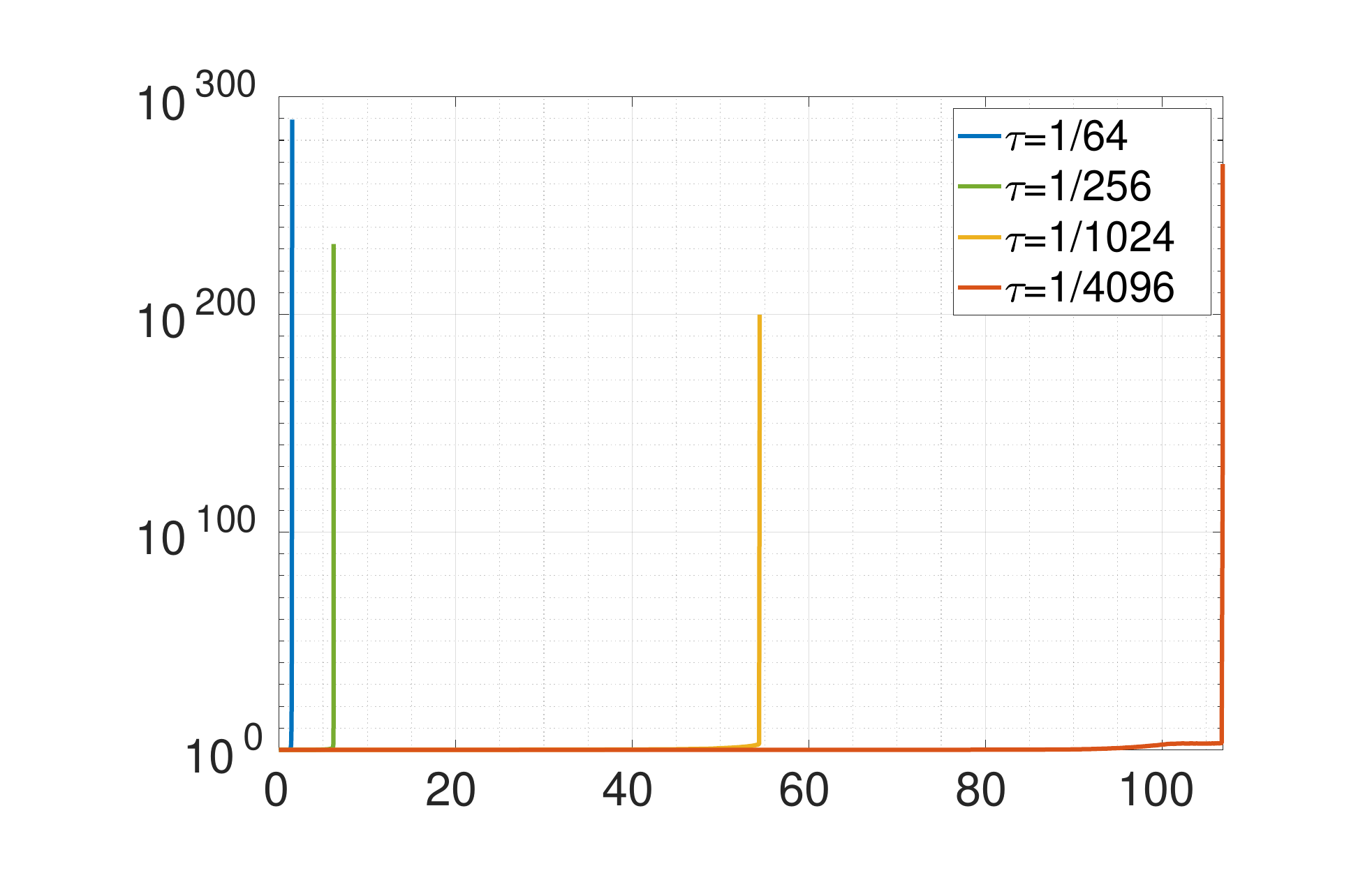}
\end{subfigure}
\caption{Gresho vortex results for the non-stabilized Galerkin IMEX-Euler discretization on a Friedrichs--Keller triangulation with $h=\sqrt 2/32$.
The diagrams show a mesh plot of the discrete initial data (panel a) and semi-logarithmic plots of energy evolutions for simulations using the consistent (panel b) and lumped (panel c) mass matrices.}\label{fig:gresho-coarse}
\end{figure}

In contrast, our new energy-stable schemes can be run with arbitrarily large time steps and still produce stable results.
The use of large pseudo-time steps $\Delta t$ yields harder-to-solve linear systems because small time steps lead to dominant mass matrix contributions.
However, fewer updates produce less diffusive approximations to steady-state solutions.
To quantify these effects, we ran the Gresho test again up to end time $t=1000$ with all three types of meshes using $h=1/128$ for the structured grids and $h=1/80$ in the case of the unstructured Delaunay triangulation.
In Tables~\ref{tab:gresho-long1}--\ref{tab:gresho-long1000}, we display the total time required for computations using $\Delta t \in \{1,1000\}$ and both kinds of mass matrices.
To give an indication of how diffusive each approach is, we also report the maximum velocity magnitude and relative energy loss.
Note that $\Delta t=1$ is already larger by a factor of 256 than the corresponding time step employed in the earlier convergence studies for the same grids.
For accuracy reasons, the use of time steps as large (compared to the mesh size) as in the present experiments is naturally discouraged for transient problems.
Our only reason for including the elapsed times here is to facilitate a comparison of the linear solver's performance for $\Delta t=1$ vs.\ $\Delta t=1000$.
Thus, we refrain from repeating various runs with the same numerical parameters or testing different hardware.

Despite the fact that the time step $\Delta t=1000$ is extremely large, the corresponding approximations on structured grids agree qualitatively well with the ones shown in Fig.~\ref{fig:gresho}.
On the unstructured mesh, the pressure solution is in reasonable agreement with the exact solution, while the velocity field contains nonphysical artifacts.
All results obtained with $\Delta t=1$ are useless because temporal discretization errors of order $\mathcal O(\Delta t^3) = \mathcal O(1)$ arise in each time step and accumulate over time.
The resulting velocity and pressure fields are oscillatory and bear no resemblance to the exact solution.
However, the proposed scheme remains stable with large percentages of energy being dissipated.

The large time step experiments for the Gresho vortex are meant to demonstrate the stability of our new scheme and the performance of the linear solvers in the context of steady-state computations.
The occurrence of bounded spurious oscillations does not contradict the theory developed in Sections~\ref{sec:strong} and \ref{sec:weak} for the energy-stable space discretization \eqref{galerkin-stable}.
The use of very large time steps in fully discrete schemes may result in numerical artifacts due to unacceptable rates of local energy production.
However, no increase in the total energy was observed in any of our numerical experiments.

Slight differences in the computational time and diffusivity for discretizations listed in Tables~\ref{tab:gresho-long1} and~\ref{tab:gresho-long1000} are due to varying spatial resolution.
It is interesting to note that in the experiment using $\Delta t=1000$, the lumped-mass version is more efficient for triangular grids than for quadrilaterals.
Moreover, Friedrichs--Keller triangulations seem to outperform quadrilateral grids despite employing the same number of unknowns and yielding results of comparable accuracy.
These cost factors can be attributed to the linear solver (MATLAB's backslash operator), which we found to be the bottleneck that dominates the computational cost of our current implementation.
The computational time for a simulation run using 1000 time steps by far exceeds the cost of a computation using a single time step.
Clearly, this is due to the need for solving many linear systems instead of just one.
Normalized by the factor 1000, however, the computational cost per time step is clearly much less for $\Delta t=1$.
This confirms that the use of inordinately large time steps may significantly degrade the conditioning of system matrices and increase the overall computational effort for solving linear systems.
Moreover, we ran into memory issues not encountered for $\Delta t=1$ when we tried to use $\Delta t=1000$ on even finer~grids.

\begin{table}[ht!]
\centering
\begin{tabular}{l|rrr}
Discretization & time [s] & $\max\limits_{\mathbf x\in\bar\Omega}|\mathbf u_h(\mathbf x,t=1000)|$ & energy loss \% \\
\hline
Friedrichs--Keller, consistent    & 655 & 0.947 & 80.9 \\
Friedrichs--Keller, lumped        & 655 & 0.690 & 72.7 \\
unstructured Delaunay, consistent & 576 &  1.22 & 77.2 \\
unstructured Delaunay, lumped     & 577 & 0.864 & 70.5 \\
uniform quadrilateral, consistent & 710 &  1.48 & 72.6 \\
uniform quadrilateral, lumped     & 722 & 0.794 & 64.0
\end{tabular}
\caption{Gresho vortex problem solved up to $t=1000$ using $\Delta t=1$.}\label{tab:gresho-long1}
\end{table}

\begin{table}[ht!]
\centering
\begin{tabular}{l|rrr}
Discretization & time [s] & $\max\limits_{\mathbf x\in\bar\Omega}|\mathbf u_h(\mathbf x,t=1000)|$ & energy loss \% \\
\hline
Friedrichs--Keller, consistent    & 41.1 & 0.930 & 7.66 \\
Friedrichs--Keller, lumped        & 21.3 & 0.929 & 7.65 \\
unstructured Delaunay, consistent & 21.0 &  3.82 & 9.28 \\
unstructured Delaunay, lumped     & 8.72 &  2.92 & 9.06 \\
uniform quadrilateral, consistent & 62.9 & 0.925 & 7.66 \\
uniform quadrilateral, lumped     & 64.0 & 0.926 & 7.65
\end{tabular}
\caption{Gresho vortex problem solved up to $t=1000$ using $\Delta t=1000$.}\label{tab:gresho-long1000}
\end{table}

\section{Conclusions}
\label{sec:conclusions}

The main focus of this work was on the theoretical and practical
implications of local energy inequalities in the context of (continuous
$\mathbb{P}_1\mathbb{P}_1$) finite element
methods for incompressible flow problems. The proposed methodology has a
lot in common with algebraic flux correction schemes for nonlinear hyperbolic
systems, which enables us to exploit the existing similarities. The results
of our theoretical studies and numerical experiments illustrate the importance of
local energy stability, which we use as a tool for proving consistency
and convergence towards dissipative weak solutions. It is hoped that our
findings provide strong motivation for further research efforts in
the field of structure-preserving finite element schemes for the 
incompressible Navier--Stokes equations. We finally remark that the design
of locally energy stable high-order extensions is likely to require subcell
flux correction (as in \cite{hajduk2021,kuzmin2020a}) and more
sophisticated analysis or the use of alternative energy stabilization
procedures.
 
\section*{Acknowledgments}

The work of D.K. and P.\"O.
was supported by the German Research Foundation (DFG) within the framework of the priority research program SPP 2410 under grants KU 1530/30-1 and OE 661/5-1, respectively.
P.\"O. was also supported by the DFG under the personal grant 520756621 (OE 661/4-1).

H.H. acknowledges support from The Rough Ocean Project, funded by the Research Council of Norway under the Klimaforsk-programme, Project 302743.
H.H. also thanks Dr. Christoph Lohmann (TU Dortmund University) for a helpful suggestion on how to efficiently assemble certain operators.

\bibliographystyle{abbrv}
\bibliography{literature}

\end{document}